\documentclass[11pt, oneside]{amsart}

\pdfoutput=1
\synctex=1
\pdfsuppresswarningpagegroup=1

\usepackage[centering]{geometry}                		
\geometry{letterpaper}                   		
\usepackage{graphicx}				

\usepackage{amssymb}
\usepackage{bm}
\usepackage{tikz-cd}

\theoremstyle{definition}
\newtheorem{definition}{Definition}
\newtheorem{remark}{Remark}

\theoremstyle{plain}
\newtheorem{theorem}{Theorem}
\newtheorem{corollary}{Corollary}
\newtheorem{lemma}{Lemma}
\newtheorem{proposition}{Proposition}

\newcommand\ord{\mathop{\mathrm{ord}}\nolimits}
\newcommand\exmid{\mathrel{\|}}

\newcommand\Kl{\mathrm{Kl}}
\newcommand\JJ{\mathcal{J}}

\newcommand\dd{\mathrm{d}}

\numberwithin{equation}{section}

\usepackage{xcolor}
\definecolor{couleur_cite}{rgb}{0.05,.4,0.05}
\definecolor{couleur_link}{rgb}{0.05,0.05,0.4}
\definecolor{couleur_url}{rgb}{0.5,0,0}
\usepackage[hyperfootnotes=false]{hyperref}
\hypersetup{hypertexnames=false,colorlinks=true,citecolor=couleur_cite,linkcolor=couleur_link,urlcolor=couleur_url,
pdfstartview=FitH, pdfauthor=Djordje Milicevic and Sichen Zhang, pdftitle=Distribution of Kloosterman paths to high prime power moduli}

\title[Distribution of Kloosterman paths to high prime power moduli]{Distribution of Kloosterman paths\\ to high prime power moduli}
\author{Djordje Mili\'cevi\'c, Sichen Zhang}

\address{Bryn Mawr College, Department of Mathematics, 101 North Merion Avenue, Bryn Mawr, PA 19010, USA}
\curraddr{Max-Planck-Institut f\"ur Mathematik, Vivatsgasse 7, D-53111 Bonn, Germany}
\email{dmilicevic@brynmawr.edu}

\address{Bryn Mawr College, Department of Mathematics, 101 North Merion Avenue, Bryn Mawr, PA 19010, USA}
\email{czhang2@brynmawr.edu}

\thanks{D.M. was supported by National Science Foundation Grant DMS-1903301.}
\keywords{Kloosterman sums, $p$-adic method of stationary phase, random Fourier series, short exponential sums, sums of products, moments, probability in Banach spaces}
\subjclass[2010]{11L05 Primary, 11T23, 60F17, 60G16, 60G50 Secondary.}

\begin{document}

\begin{abstract}
We consider the distribution of polygonal paths joining the partial sums of normalized Kloosterman sums modulo an increasingly high power $p^n$ of a fixed odd prime $p$, a pure depth-aspect analogue of theorems of Kowalski--Sawin and Ricotta--Royer--Shparlinski. We find that this collection of Kloosterman paths naturally splits into finitely many disjoint ensembles, each of which converges in law as $n\to\infty$ to a distinct complex valued random continuous function. We further find that the random series resulting from gluing together these limits for every $p$ converges in law as $p\to\infty$, and that paths joining partial Kloosterman sums acquire a different and universal limiting shape after a modest rearrangement of terms. As the key arithmetic input we prove, using the $p$-adic method of stationary phase including highly singular cases, that complete sums of products of arbitrarily many Kloosterman sums to high prime power moduli exhibit either power savings or power alignment in shifts of arguments.
\end{abstract}

\maketitle

\section{Introduction}

This paper is inspired by the beautiful work of Kowalski--Sawin~\cite{KowalskiSawin2016} and Ricotta, Royer, and Shparlinski~\cite{RicottaRoyer2018,RicottaRoyerShparlinski2020}, which considered the distribution of Kloosterman paths, polygonal paths joining the partial sums of Kloosterman sums modulo a large prime $p$, and a fixed  $n^{\text{th}}$ power $p^n$ of a large prime $p$, respectively. Paths traced by incomplete exponential sums such as those in Figures~\ref{p-3-figure} and \ref{larger-prime-figure} give a fascinating insight into the chaotic formation of the square-root cancellation in the corresponding complete sums and have been studied since at least \cite{Lehmer1976,Loxton1983,Loxton1985}. In this paper, we address the pure depth-aspect distribution of Kloosterman paths to moduli that are high powers $p^n$ of a fixed odd prime $p$ and find many new, surprisingly distinctive features.

\subsection{Kloosterman paths}
\label{paths-intro-sec}
Let $p$ be an odd prime and $n\in\mathbb{N}$. For $a,b\in(\mathbb{Z}/p^n\mathbb{Z})^{\times}$, we define the normalized Kloosterman sum as
\begin{equation}
\label{kloost-def}
\Kl_{p^n}(a,b)=\frac{1}{p^{n/2}}S(a,b;p^n)=\frac{1}{p^{n/2}}\sum_{1\le x\le p^n,(x,p)=1}e_{p^n}(ax+b\overline{x}),
\end{equation}
where $e_{p^n}(z)=e(z/p^n)=e^{2\pi iz/p^n}$, and $\overline{x}$ denotes the inverse of $x$ modulo $p^n$. The normalization reflects the square-root cancellation in these complete exponential sums consisting of $\varphi(p^n)$ terms:
\[ |\Kl_{p^n}(a,b)|\leqslant 2, \]
a result of Weil's deep algebro-geometric bound~\cite{Weil1948} for $n=1$ and of the explicit evaluation using the $p$-adic method of stationary phase for $n\geqslant 2$ (which we collect in Lemma~\ref{kloost-eval}). In particular, in this latter case, $\Kl_{p^n}(a,b)=0$ unless $ab\in(\mathbb{Z}/p^n\mathbb{Z})^{\times 2}$.

By the time the summation in \eqref{kloost-def} is completed, substantial cancellation has occurred and one lands at an end point in the real interval $[-2,2]$. In a reasonable ensemble (varying some subset of $a$, $b$, $p$, $n$), its limiting distribution can often be fruitfully described by an appropriate Sato--Tate measure; more on that below. Kloosterman paths describe \emph{the road taken} to that end point. 

Specifically, for $j\in\JJ_{p^n}:=\{j\in\{1,...,p^{n}\}, p\nmid j\}=\{j_1<\dots<j_{\varphi(p^{n})}\}$, define the partial sum
\[ \Kl_{j;p^n}(a,b)=\frac{1}{p^{n/2}}\sum_{1\le x\le j, p\nmid x}e\left(\frac{ax+b\bar{x}}{p^n}\right), \]
and define the \emph{Kloosterman path} as the polygonal path $\gamma_{p_n}(a,b)$ obtained by concatenating the closed segments $[\text{Kl}_{j_i;p^n}(a,b_0),\text{Kl}_{j_{i+1};p^n}(a,b_0)]$ for all $j_i\in\JJ_{p^n}$.  We may also think of this path as a continuous map $\Kl_{p^n}(\cdot;(a,b)):[0,1]\to\mathbb{C}$, $t \mapsto \text{Kl}_{p^n}(t;(a,b))$ obtained by parameterizing the path $\gamma_{p^n}(a,b)$ so that each of the $\varphi(p^n)-1$ segments is parametrized linearly on an interval of length $1/(\varphi(p^n)-1)$.

For a fixed $b_0\in\mathbb{Z}$ with $p\nmid b_0$, the map from $(\mathbb Z/p^n\mathbb Z)^{\times}\to C^0([0,1],\mathbb C)$ given by
\begin{equation}
\label{kl-pn-def}
a\mapsto \Kl_{p^n}(t;(a,b_0))
\end{equation}
can be viewed as a random variable on the finite probability space $(\mathbb Z/p^n\mathbb Z)^\times$ with the uniform probability measure with values in the space of complex valued continuous functions on [0,1] endowed with the supremum norm, and we denote this random variable by $\Kl_{p^n}(t)$.

Kowalski--Sawin~\cite[Theorem 1.1]{KowalskiSawin2016} show that, in the prime case $n=1$, the random variable $\Kl_p$ converges as $p\to\infty$, in the sense of finite distributions, to a specific random Fourier series (that is, a $C^0([0,1],\mathbb{C})$-valued random variable). Ricotta--Royer~\cite[Theorem A]{RicottaRoyer2018} prove the corresponding theorem for a fixed $n\geqslant 2$ and $p\to\infty$, indentifying a \emph{different} limiting random Fourier series, and Ricotta--Royer--Shparlinski~\cite{RicottaRoyerShparlinski2020} prove that (for $n\geqslant 31$) this latter convergence holds in law, a substantial stregthening. We refer the reader to \S\ref{prob-sec} for a summary of probabilistic notions and tools.

\subsection{Main result}
In this paper, we address the pure depth-aspect analogue of Kowalski--Sawin~\cite{KowalskiSawin2016}, the question of distribution of Kloosterman paths modulo $p^n$ for a fixed prime $p$ and $n\to\infty$. Ricotta, Royer, and Shparlinski note \cite[p.176]{RicottaRoyerShparlinski2020}, \cite[p.498]{RicottaRoyer2018} that ``this problem, both theoretically and numerically, seems to be of completely different nature''.

\begin{figure}
\centering
\includegraphics[width=\textwidth]{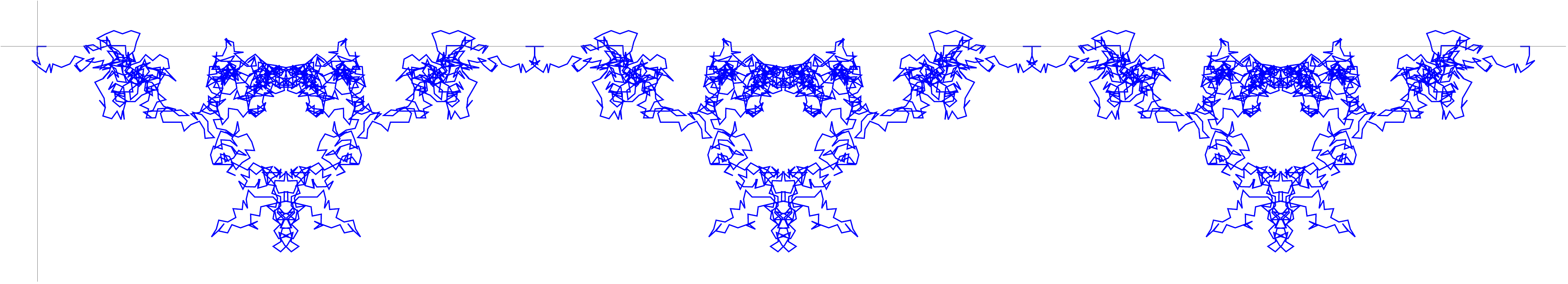}
\includegraphics[width=0.43\textwidth]{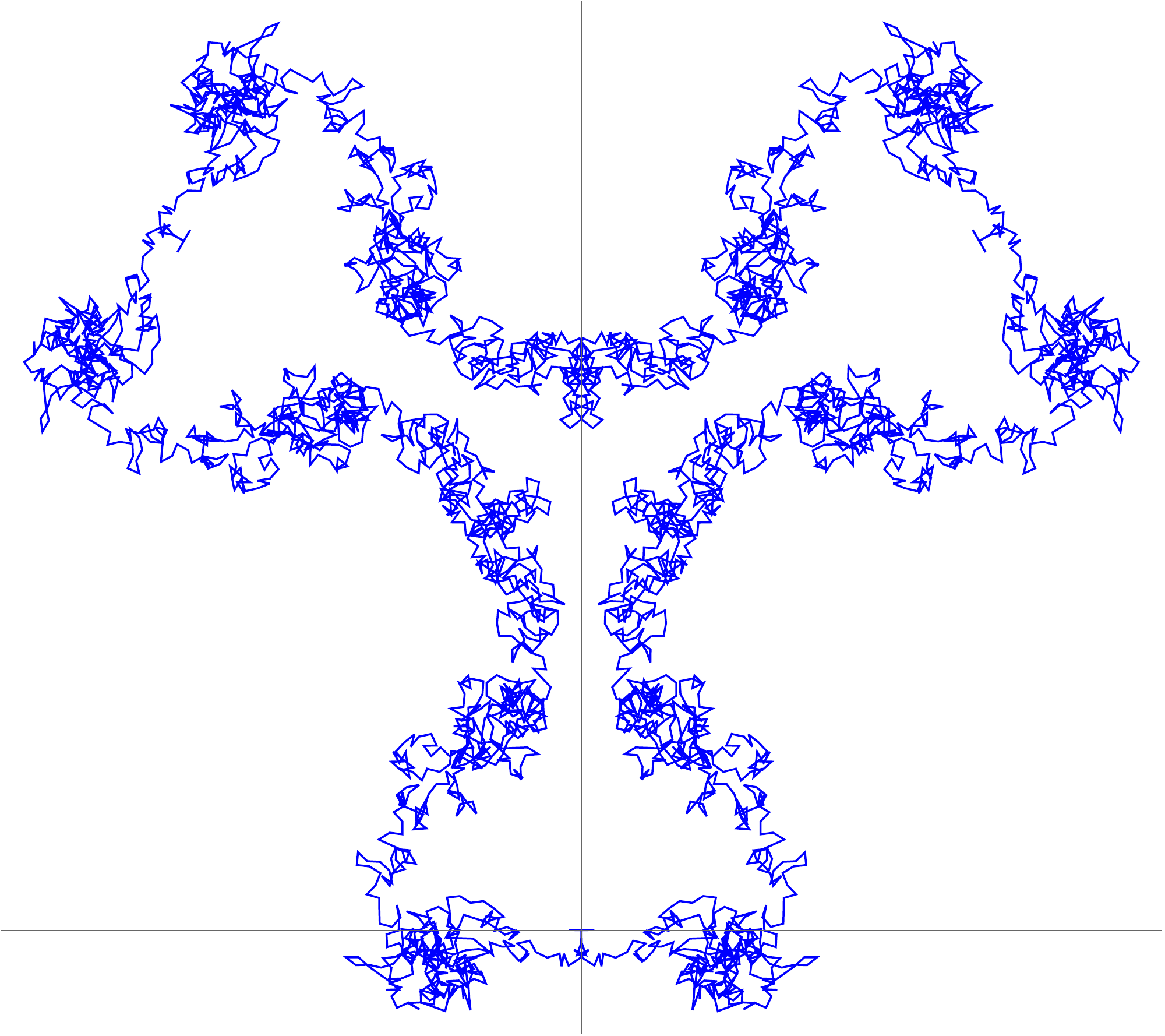}
\caption{Kloosterman paths $\Kl_{3^8}(t;(1,1))$ and $\Kl_{3^8}(t;(5,1))$.}
\label{p-3-figure}
\end{figure}

\begin{figure}
\centering
\includegraphics[width=0.9\textwidth]{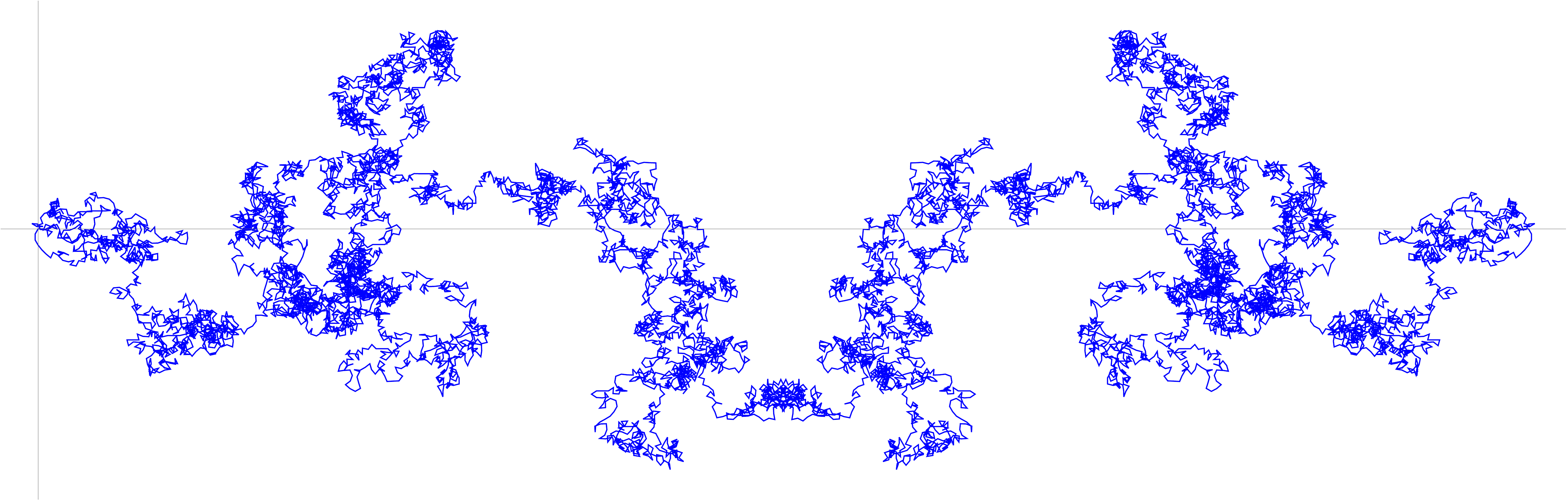}
\includegraphics[width=0.43\textwidth]{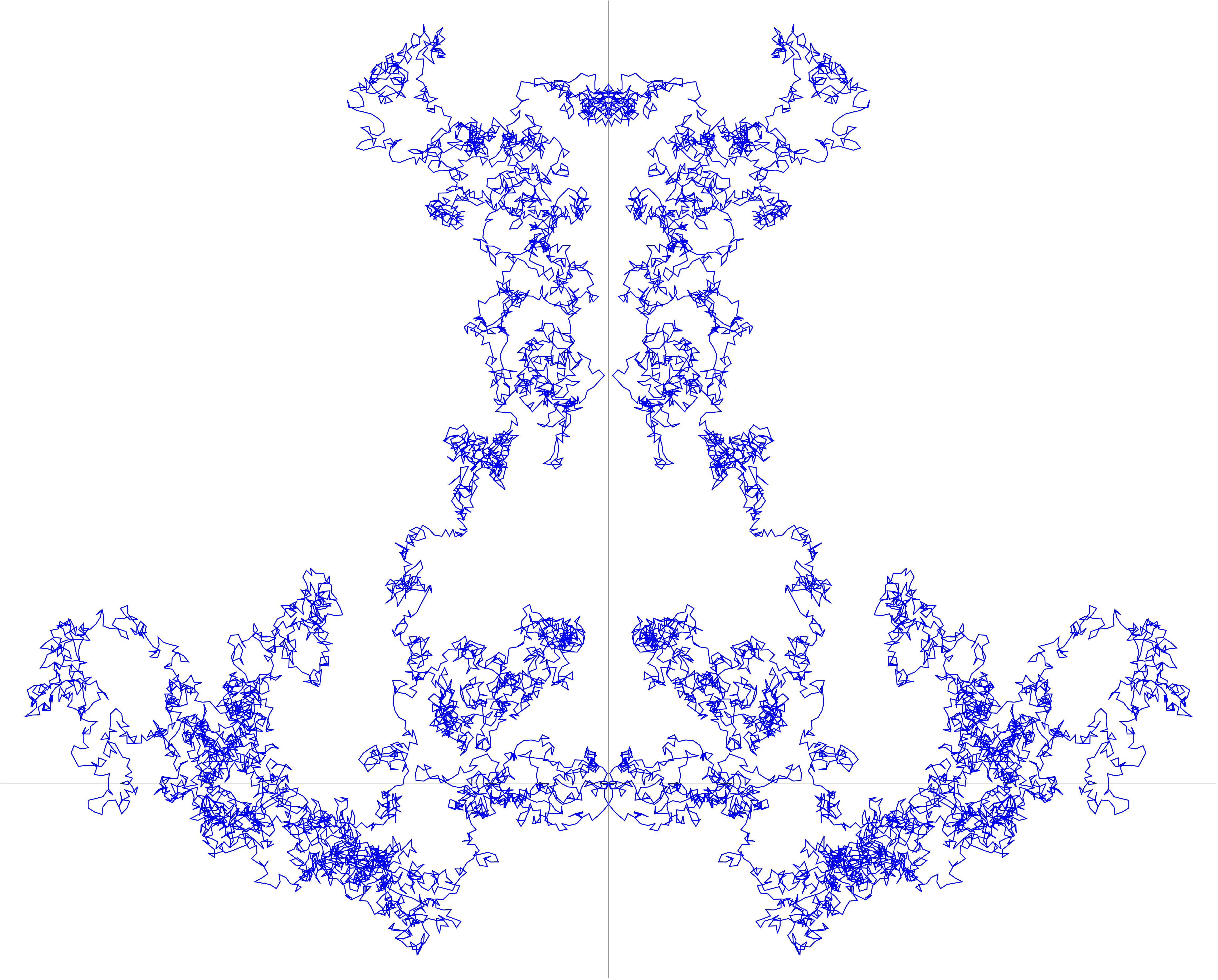}
\caption{Kloosterman paths $\Kl_{5^6}(t;(1,1))$ and $\Kl_{5^6}(t;(2,1))$.}
\label{larger-prime-figure}
\end{figure}

Indeed, one needs to look no further than a couple of sample paths modulo large $3^n$ to see that, say, paths $\Kl_{3^n}(a,1)$ with $a\equiv 1\pmod 3$ and $a\equiv 2\pmod 3$ look nothing like each other; see Figure~\ref{p-3-figure}. In particular, they exhibit a stark translational and rotational symmetry of order 3, respectively. Much of the invariance distinction becomes less visually obvious for larger primes $p$, though, with all but the obvious bilateral symmetry seemingly disappearing as in Figure~\ref{larger-prime-figure}, and one might start to wonder if the case $p=3$ is a fluke. On the other hand, about half of the paths tend to wander away in the horizontal direction while the rest do not, which of course corresponds to the distribution of the complete sum $\Kl_{p^n}(a,b)$ and its vanishing when $ab\not\in(\mathbb{Z}/p\mathbb{Z})^{\times 2}$.

We will see that the distinction among different classes $[a\bmod p]\in(\mathbb{Z}/p\mathbb{Z})^{\times}$, which is a well-defined distinction in the family with a fixed $p$, persists and it becomes harder, and indeed more artificial, to describe the joint distribution of all paths as $a\in(\mathbb{Z}/p^n\mathbb{Z})^{\times}$. Instead, for $n\geqslant 2$ it becomes natural to split the ensemble of Kloosterman paths $\Kl_{p^n}(t;(a,b_0))$ once and for all into $p-1$ subfamilies indexed additionally by $a_1\in(\mathbb{Z}/p\mathbb{Z})^{\times}$, and consider the restriction of the map \eqref{kl-pn-def} to the set $(\mathbb{Z}/p^n\mathbb{Z})^{\times}_{a_1}=\big\{a\in(\mathbb{Z}/p^n\mathbb{Z})^{\times}:a\equiv a_1\bmod p\big\}$ and obtain a random variable
\begin{equation}
\label{kl-pn-a1-def}
\Kl_{p^n}(t;(a_1,b_0)):(\mathbb{Z}/p^n\mathbb{Z})^{\times}_{a_1}\to C^0([0,1],\mathbb{C}),\quad a\mapsto\Kl_{p^n}(t;(a,b_0))
\end{equation}
on the finite probability space $(\mathbb{Z}/p^n\mathbb{Z})^{\times}_{a_1}$.

Consider the absolutely continuous probability Borel measure $\mu$ on $[-2,2]$ given by
\begin{equation}
\label{def-mu}
\mu(f)=\frac{1}{\pi}\int_{-2}^2 \frac{f(x)}{\sqrt{4-x^2}}\,\dd x
\end{equation}
for any real continuous function $f$ on $[-2,2]$. From this we construct a finite collection of $C^0([0,1],\mathbb{C})$-valued random variables $\Kl(t;p;(a_1,b_0))$, whose definition and properties we collect in the following proposition.

\begin{proposition}
\label{properties-prop}
Let $p$ be a fixed odd prime, and let $a_1,b_0\in(\mathbb{Z}/p\mathbb{Z})^{\times}$.

Let $(U^{\sharp}_h)_{h\in\mathbb Z,(a_1-h)b_0 \in (\mathbb Z/p\mathbb Z)^{\times 2}}$ be a sequence of independent identically distributed random variables of probability law $\mu$ in \eqref{def-mu}. Then, the random series
\begin{equation}
\label{our-random-series}
\Kl(t;p;(a_1,b_0))=\sum_{\substack{h\in\mathbb Z\\ (a_1-h)b_0 \in (\mathbb Z/p\mathbb Z)^{\times 2}}}\frac{e(ht)-1}{2\pi ih}U^{\sharp}_h \qquad(t\in [0,1]),
\end{equation}
converges almost surely and in law, where the term $h=0$, if present, is interpreted as $tU^{\sharp}_0$. Its limit, as a random function, is almost surely continuous and nowhere differentiable. Moreover, for every $t\in [0,1]$,
$\mathbb{E}(\Kl(t;p;(a_1,b_0)))=0$ and $\mathbb{V}(\Kl(t;p;(a_1,b_0)))\leqslant t$.
\end{proposition}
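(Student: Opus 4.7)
Write $c_h(t) := (e(ht)-1)/(2\pi i h)$ for $h\neq 0$ and $c_0(t) := t$, and let $\mathcal H := \{h\in\mathbb Z : (a_1-h)b_0\in(\mathbb Z/p\mathbb Z)^{\times 2}\}$, so that \eqref{our-random-series} reads $\sum_{h\in\mathcal H} c_h(t)U^\sharp_h$. The i.i.d.\ variables $U^\sharp_h$ are bounded by $2$ and symmetric about $0$ (hence mean zero), with $\int_{-2}^{2} x^2\,\mu(\dd x) = 2$; while $|c_h(t)| = |\sin(\pi h t)|/(\pi|h|) \leqslant \min(t, 1/(\pi|h|))$ and Parseval applied to $\mathbf 1_{[0,t]}$ on $[0,1]$ gives $\sum_{h\in\mathbb Z}|c_h(t)|^2 = t$. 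For any fixed $t$ the summands $c_h(t)U^\sharp_h$ are independent, mean zero, with summable variances, so Kolmogorov's three-series theorem will yield almost sure convergence at $t$. Term-by-term linearity (justified by $L^2$-convergence) gives $\mathbb E\,\Kl(t;p;(a_1,b_0))=0$, and the variance bound $\mathbb V\,\Kl(t;p;(a_1,b_0))\leqslant t$ will follow from Parseval combined with the fact that $\mathcal H$ has density at most $1/2$ in $\mathbb Z$, via the Legendre-symbol decomposition of $\mathbf 1_{\mathcal H}$.

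To upgrade pointwise to uniform a.s.\ convergence in $C^0([0,1],\mathbb C)$, I will partition $\mathcal H\setminus\{0\}$ into dyadic shells $\mathcal B_k := \mathcal H\cap\{2^k\leqslant|h|<2^{k+1}\}$ and set $T_k(t) := \sum_{h\in\mathcal B_k} c_h(t) U^\sharp_h$. Writing $T_k(t) = P_k(t) - P_k(0)$ with the random trigonometric polynomial $P_k(t) := (2\pi i)^{-1}\sum_{h\in\mathcal B_k} h^{-1} U^\sharp_h\,e(ht)$ exhibits $T_k$ as (an affine shift of) a random polynomial with independent bounded symmetric coefficients; the classical Salem--Zygmund maximal inequality combined with sub-Gaussian concentration for polynomials in bounded independent variables then gives
\[
\mathbb E\|T_k\|_{L^\infty[0,1]} \ll \sqrt{k}\cdot\Big(\sum_{h\in\mathcal B_k} h^{-2}\Big)^{1/2} \ll \sqrt{k}\,2^{-k/2},
\]
together with $\mathbb P(\|T_k\|_\infty > A\sqrt{k}\,2^{-k/2}) \leqslant \exp(-cA^2 k)$. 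Borel--Cantelli will then yield $\sum_k \|T_k\|_\infty < \infty$ almost surely, so that \eqref{our-random-series} converges uniformly on $[0,1]$ a.s. The limit is continuous as a uniform limit of continuous partial sums, and a.s.\ uniform convergence in $C^0([0,1],\mathbb C)$ implies convergence in law.

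Nowhere differentiability reflects the critical $1/|h|$ decay of $c_h(t)$: the formal derivative series $\sum_{h\in\mathcal H} U^\sharp_h\,e(ht)$ diverges in $L^2$. Following the Paley--Zygmund/Kahane template, I will use, for any fixed $t_0\in[0,1]$, anti-concentration for the contribution of a single dyadic block $\mathcal B_k$ to produce, with full probability (by Kolmogorov's zero--one law), points $t$ with $|t-t_0|\leqslant 2^{-k}$ at which $|\Kl(t)-\Kl(t_0)|$ exceeds any prescribed multiple of $|t-t_0|$; applied to a countable dense set of $t_0$ and combined with continuity of $\Kl$, this gives nowhere differentiability on $[0,1]$. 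The principal obstacle is the uniform-convergence step: since $c_h\asymp 1/|h|$ sits precisely at the boundary where Parseval alone is insufficient to guarantee uniform convergence, one needs a genuine maximal inequality such as Salem--Zygmund, whose hypotheses are comfortably met thanks to the sub-Gaussian (in fact bounded) tails of the summands $U^\sharp_h$.
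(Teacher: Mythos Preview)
Your approach follows the same standard route as the paper, which itself omits the proof and simply refers to \cite[Proposition~2.1]{KowalskiSawin2016}: sub-Gaussian bounds on the bounded i.i.d.\ coefficients $U^{\sharp}_h$, a dyadic decomposition together with a Salem--Zygmund-type maximal inequality for uniform a.s.\ convergence, and a Kahane-style zero--one argument for nowhere differentiability. The paper spells out none of these steps, so your sketch is already more detailed than what appears there; the uniform-convergence and nowhere-differentiability outlines are correct and standard.

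One point does not go through as written: your argument for the bound $\mathbb{V}(\Kl(t;p;(a_1,b_0)))\leqslant t$ via ``density at most $\tfrac12$'' and the Legendre-symbol decomposition of $\mathbf{1}_{\mathcal H}$ fails. Since $\mathrm{Var}(U^{\sharp}_h)=\int x^2\,\dd\mu=2$, the variance equals $2\sum_{h\in\mathcal H}|c_h(t)|^2$, and the weights $|c_h(t)|^2$ are in no way equidistributed across residue classes modulo $p$: at $t=1$ only $h=0$ survives, and when $a_1b_0\in(\mathbb{Z}/p\mathbb{Z})^{\times 2}$ one gets $\mathbb{V}(\Kl(1;p;(a_1,b_0)))=2>1$. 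Writing $\mathbf{1}_{\mathcal H}(h)=\tfrac12\big(1+\big(\tfrac{(a_1-h)b_0}{p}\big)\big)$ just produces a character-twisted sum of $|c_h(t)|^2$ with no reason to be nonpositive. What Plancherel actually yields is $\mathbb{V}\leqslant 2t$; the sharper inequality as stated appears to be a minor slip in the paper, carried over from the Ricotta--Royer setting where the coefficient variables $U_h$ (with law $\mu_U=\tfrac12\delta_0+\tfrac12\mu$) have variance $1$ rather than $2$. Nothing downstream depends on the constant, so this is harmless, but you should not claim a proof of the bound as printed.
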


Our main result, then, is as follows.

\begin{theorem}
\label{main}
Let $p$ be a fixed odd prime, and let $a_1,b_0\in(\mathbb{Z}/p\mathbb{Z})^{\times}$. Then, the sequence of $C^0([0,1],\mathbb C)$-valued random variables $\Kl_{p^n}(t;(a_1,b_0))$ on $(\mathbb Z/p^n\mathbb Z)^{\times}_{a_1}$ defined in \eqref{kl-pn-a1-def} converges in law, as $n\to\infty$, to the $C^0([0,1],\mathbb C)$- valued random variable $\Kl(t;p;(a_1,b_0))$ defined in \eqref{our-random-series}.
\end{theorem}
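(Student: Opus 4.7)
My plan is to adapt the two-step strategy used by Kowalski--Sawin and Ricotta--Royer--Shparlinski: first establish convergence of finite-dimensional distributions via the method of moments, then prove tightness in $C^0([0,1],\mathbb{C})$ to upgrade to convergence in law. The new arithmetic input in the depth aspect will be an evaluation of averages of products of Kloosterman sums modulo $p^n$ over the restricted congruence class $a\equiv a_1\pmod p$, which is the key lemma the authors flag in the abstract.

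The first technical step is Fourier completion of the partial Kloosterman sum. Expanding the indicator of $\{1,\dots,j\}$ in additive characters modulo $p^n$ and reindexing one obtains
\begin{equation*}
\Kl_{p^n}(t;(a,b_0))=\sum_{|h|<p^n/2}\alpha_h(t)\Kl_{p^n}(a-h,b_0)+E_n(t;a),
\end{equation*}
where $\alpha_h(t)=(e(ht)-1)/(2\pi ih)+O(|h|/p^n)$ for $h\neq 0$, $\alpha_0(t)=t+O(p^{-n})$, and $E_n(t;a)$ is a uniformly small completion error. Since $\Kl_{p^n}(a-h,b_0)=0$ unless $(a-h)b_0\in(\mathbb{Z}/p^n\mathbb{Z})^{\times 2}$, and since $a\equiv a_1\pmod p$, only shifts $h$ with $(a_1-h)b_0\in(\mathbb{Z}/p\mathbb{Z})^{\times 2}$ survive, which is exactly the indexing set in \eqref{our-random-series}. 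Truncating to $|h|\le H=H(n)$ with $H\to\infty$ sufficiently slowly and controlling the tail reduces the problem to a finite linear combination matching the shape of the target random series.

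For the finite-dimensional distributions I would compute mixed moments
\begin{equation*}
\mathbb{E}\Bigl[\prod_{j=1}^r\Kl_{p^n}(t_j;(a,b_0))^{m_j}\overline{\Kl_{p^n}(t_j;(a,b_0))}^{m_j'}\Bigr]
\end{equation*}
by substituting the Fourier expansion and averaging over $a\in(\mathbb{Z}/p^n\mathbb{Z})^{\times}_{a_1}$. This expresses each mixed moment as a combination of Fourier weights times averages of products $\prod_j\Kl_{p^n}(a-h_j,b_0)^{e_j}$ over a fixed class of $a$. Here I would invoke the arithmetic lemma: such averages either admit power savings (when the multiset of shifts modulo $p$ is not ``aligned'') or factor asymptotically as a product indexed by distinct residue classes $h\bmod p$ appearing among the shifts, each factor being a power of a Sato--Tate moment of $\mu$ in the limit. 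Since the $U^{\sharp}_h$ in \eqref{our-random-series} are independent and each of law $\mu$, the resulting asymptotic reproduces termwise the corresponding mixed moment of $\Kl(t;p;(a_1,b_0))$. As the limit is determined by its moments (the random coefficients being bounded by $2$), summing over $h_j$ up to $H$ and letting $H\to\infty$ gives convergence of finite-dimensional distributions.

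For tightness in $C^0([0,1],\mathbb{C})$ I would establish a Kolmogorov-type increment bound
\begin{equation*}
\mathbb{E}\bigl|\Kl_{p^n}(t;(a,b_0))-\Kl_{p^n}(s;(a,b_0))\bigr|^{2k}\ll_k|t-s|^{1+\delta}
\end{equation*}
for a sufficiently large even integer $2k$ and some $\delta>0$, uniformly in $n$, again by expanding the increment via completion and appealing to the arithmetic lemma to control the resulting sum of products of Kloosterman sums. Combined with finite-dimensional convergence this yields convergence in law. The main obstacle is squarely the Step 2 arithmetic input: one needs the $p$-adic method of stationary phase pushed far enough to control products of arbitrarily many Kloosterman sums with shifted arguments to modulus $p^n$, including the highly singular cases in which many shifts coincide modulo $p$ while differing higher up. It is precisely this alignment structure, not merely equality of shifts but congruence modulo $p$, that produces the dependence of the limit on $a_1$ and the square condition on the indexing set in \eqref{our-random-series}.
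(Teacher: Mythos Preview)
Your proposal follows essentially the same two-step architecture as the paper: completion plus the method of moments for finite distributions, then Kolmogorov's criterion for tightness, combined via Prokhorov. The arithmetic core you identify---power savings in averages of products of shifted Kloosterman sums over the class $a\equiv a_1\pmod p$ via the $p$-adic stationary phase---is exactly the paper's Theorems~\ref{kloosterman-moments} and~\ref{sums-of-products-theorem}.

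Two points deserve sharpening. First, your description of the arithmetic dichotomy is slightly off: the obstruction to power savings is not alignment of shifts modulo $p$, but collusion modulo a \emph{growing} power $p^{\lfloor\delta_2 n\rfloor}$ (see Theorem~\ref{sums-of-products-theorem}). The dependence on $a_1$ enters at an earlier and more innocuous stage, namely in determining which $h$ survive the quadratic-residue condition $(a_1-h)b_0\in(\mathbb{Z}/p\mathbb{Z})^{\times 2}$ after completion. The genuinely hard case is when distinct shifts agree modulo a high power of $p$, and the paper handles this by iterated stationary phase together with a Vandermonde argument (Propositions~\ref{multiple-stationary-phase} and~\ref{deep-singular}).

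Second, your tightness sketch has a gap. Appealing only to the sums-of-products estimate handles the regime $|t-s|\geqslant p^{-\lambda n}$ (the paper's Lemma~\ref{tight-lemma3}), but in the intermediate range $1/\varphi(p^n)\leqslant |t-s|\leqslant p^{-\lambda n}$ the increment $\widetilde{\Kl_{p^n}}(t)-\widetilde{\Kl_{p^n}}(s)$ is a short incomplete Kloosterman sum over an interval of length $\ll p^{n(1-\lambda)}$, and the completion-plus-products machinery does not give the needed saving there. The paper supplies a separate input, Proposition~\ref{short-ish-sums}, a better-than-P\'olya--Vinogradov bound for such short sums obtained from $p$-adic exponent pairs, and combines it with a second-moment orthogonality bound (Lemma~\ref{tight-lemma2}). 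Without this ingredient the Kolmogorov bound does not close uniformly in $n$.
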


We prove Theorem~\ref{main} in two steps, by establishing convergence of $\Kl_{p^n}(\cdot;(a_1,b_0))\to\Kl(\cdot;p;(a_1,b_0))$ as $n\to\infty$ in the sense of finite distributions in Sections~\ref{core-sec} and \ref{sums-of-products-sec}, and then proving that the sequence of random variables $\Kl_{p^n}(\cdot;(a_1,b_0))$ is tight as $n\to\infty$ in Section~\ref{in-law-sec}. Combining these two conclusions yields the proof of Theorem~\ref{main} in \S\ref{proof-of-main}.

\begin{remark}
Convergence in the sense of finite distributions in Theorem~\ref{main} is the analogue of the results of Kowalski--Sawin~\cite{KowalskiSawin2016} and Ricotta--Royer~\cite{RicottaRoyer2018}, which for the random variable $\Kl_{p^n}$ with $n=1$ and $n\geqslant 2$ fixed, respectively, and $p\to\infty$ find limiting random Fourier series
\begin{equation}
\label{KKl}
\begin{alignedat}{9}
&\mathrm{K}(t)&&=\sum_{h\in\mathbb{Z}}\frac{e(ht)-1}{2\pi ih}\mathrm{ST}_h, &\qquad& \dd\mu_{\mathrm{ST}}&&=\frac1{\pi}\sqrt{1-(x/2)^2}\,\dd x,\\
&\Kl(t)&&=\sum_{h\in\mathbb{Z}}\frac{e(ht)-1}{2\pi ih}U_h, &&\mu_U&&=\frac12\delta_0+\frac12\mu,
\end{alignedat}
\end{equation}
where $\mathrm{ST}_h$ and $U_h$ are independent identically distributed random variables of probability law $\mu_{\mathrm{ST}}$ and $\mu_U$, respectively.

\begin{figure}
\centering
\includegraphics[width=0.5\textwidth]{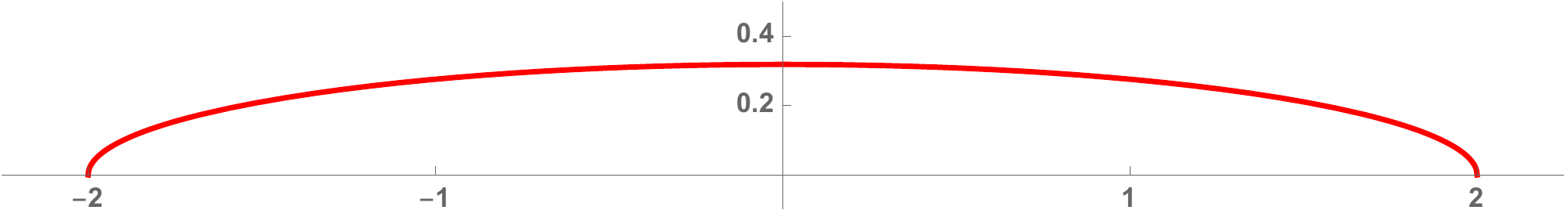}
\medskip
\includegraphics[width=0.5\textwidth]{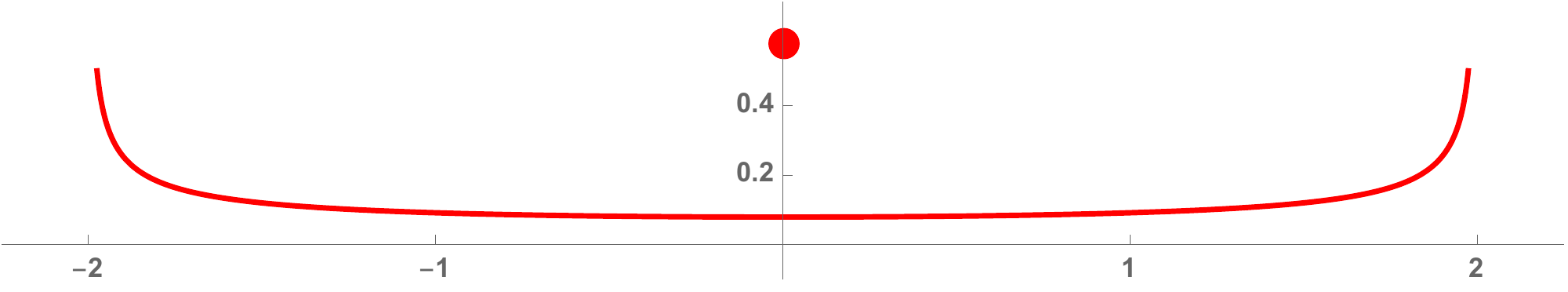}
\medskip
\includegraphics[width=0.5\textwidth]{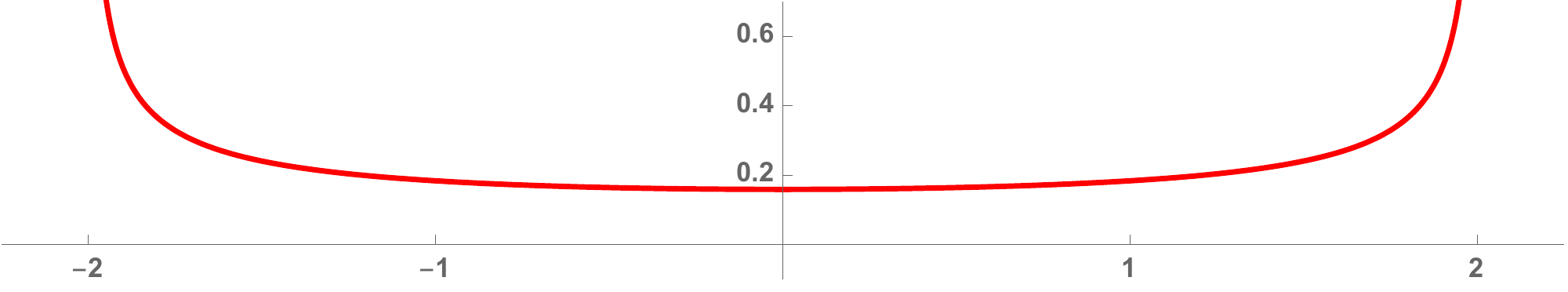}
\caption{Sato--Tate distributions $\mu_{\mathrm{ST}}$, $\mu_U$, and $\mu$, which guide the coefficients of limiting random Fourier series \eqref{our-random-series} and \eqref{KKl} for Kloosterman paths modulo $p$ as $p\to\infty$, modulo $p^n$ for $n$ fixed and $p\to\infty$, and modulo $p^n$ for $p$ fixed and $n\to\infty$; drawn to scale. The big dot in the plot of $\mu_U$ indicates $\tfrac12$ of the Dirac measure at 0.}
\end{figure}

That the analytic shape of these three series is similar stems from the use of the completion method (see Lemma~\ref{completion-lemma}), which in each case can be used (along with some rather nontrivial estimates) to show  that in a suitable sense the incomplete sum $\Kl_{j;p^n}(a,b_0)$ may be approximated by
\begin{equation}
\label{after-completion}
\sum_{|h|<p^n/2}\frac{e(hj)-1}{2\pi ihj}\Kl_{p^n}(a-h,b_0).
\end{equation}

The lens provided by \eqref{after-completion} explains the probabilistic distinction among the limiting random series in \eqref{KKl} and \eqref{our-random-series}. The underlying measure $\mu_{\mathrm{ST}}$ in \eqref{KKl} reflects the classical semi-circle Sato--Tate distribution of the normalized Kloosterman sums $\Kl_p(a-h,b_0)$, while the shape of $\mu_U$ reflects a different Sato--Tate measure for Kloosterman sums to prime power moduli $p^n$ with $n\geqslant 2$, which are given by an explicit exponential (with a $p$-adically analytic phase, see see Lemma~\ref{kloost-eval}) and distributed according to $\mu$ when $(a-h)b_0\in(\mathbb{Z}/p\mathbb{Z})^{\times 2}$, and vanish otherwise, a distinction which in the limit $p\to\infty$ (as $a$ ranges through $(\mathbb{Z}/p\mathbb{Z})^{\times}$) may be thought of as a random event of probability $\frac12$ in \eqref{after-completion}. The measures $\mu_{\mathrm{ST}}$ and $\mu$ are the direct images under the trace map of the probability Haar measure on $\mathrm{SU}_2(\mathbb{C})$ and on the normalizer of the maximal torus in $\mathrm{SU}_2(\mathbb{C})$, respectively ~\cite[Remark 1.4]{RicottaRoyer2018}.

In the depth aspect, however, when $p$ is fixed, for a given $h\in\mathbb{Z}$ there is nothing random about the event $(a-h)b_0\in(\mathbb{Z}/p\mathbb{Z})^{\times 2}$: it predictably happens or not depending only on the class of $(a\bmod p,b_0)\in((\mathbb{Z}/p\mathbb{Z})^{\times})^2$. This in turn induces finitely many (precisely up to $2(p-1)$) distinct limiting distributions in Theorem~\ref{main}, with the surviving terms guided directly by the distribution $\mu$.
\end{remark}

\begin{remark}
\label{all-a-remark}
Fix an integer $b_0\in\mathbb{Z}$ with $p\nmid b_0$. One way to informally think of Theorem~\ref{main} is that the ensemble of all paths $\Kl_{p^n}(t;(a,b_0))$ ($a\in(\mathbb{Z}/p^n\mathbb{Z})^{\times}$), which for a fixed $n$ and $p\to\infty$ has the limiting distribution $\Kl$ shown in \eqref{KKl} (and which is the same for all $n\geqslant 2$), for a fixed $p$ splits into $p-1$ classes according to $a\bmod p$, which simply have different limiting distributions $\Kl(t;p;(a_1,b_0))$ as $n\to\infty$.

One might wonder if a common distribution is somehow restored if one subsequently takes $p\to\infty$. Indeed, let $(\Omega,\mathcal{F},\nu)$ be the probability space such that $\Omega=\bigsqcup_{a_1\in(\mathbb{Z}/p\mathbb{Z})^{\times}}\Omega_{a_1}$, $\mathcal{F}=\mathcal{S}\big(\bigsqcup_{a_1\in(\mathbb{Z}/p\mathbb{Z})^{\times}}\mathcal{F}_{a_1}\big)$, $\nu(\Omega_{a_1})=1/(p-1)$ for every $a_1\in(\mathbb{Z}/p\mathbb{Z})^{\times}$, and $(\Omega_{a_1},\mathcal{F}_{a_1},(p-1)\nu\vert_{\mathcal{F}_{a_1}})$ is an underlying probability space for the $C^0([0,1],\mathbb{C})$-valued random variable $\Kl(\cdot;p;(a_1,b_0))$ in \eqref{our-random-series}. Further, let $\Kl^{\bullet}(\cdot;p)$ be the $C^0([0,1],\mathbb{C})$-valued random variable on $(\Omega,\mathcal{F},\nu)$ defined by
\begin{equation}
\label{Kl-bullet}
\Kl^{\bullet}(\cdot;p)\Big|_{\Omega_{a_1}}=\Kl(\cdot;p;(a_1,b_0))\quad\text{for every }a_1\in(\mathbb{Z}/p\mathbb{Z})^{\times}.
\end{equation}
Then Theorem~\ref{main} implies the following statement, which can be thought of as gluing together the individual limits of $\Kl_{p^n}(\cdot;(a_1,b_0))\to\Kl(\cdot;p;(a_1,b_0))$ in law as $n\to\infty$.

\begin{corollary}
Fix an odd prime $p$ and $b_0\in\mathbb{Z}$ with $p\nmid b_0$. Then the sequence of $C^0([0,1],\mathbb{C})$-valued random variables $\Kl_{p^n}$ given in \eqref{kl-pn-def} converges in law, as $n\to\infty$, to the random variable $\Kl^{\bullet}(\cdot;p)$ defined in \eqref{Kl-bullet}.
\end{corollary}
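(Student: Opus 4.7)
The plan is to recognize that this corollary is essentially a finite mixture statement that follows immediately from Theorem~\ref{main} applied class-by-class. The probability space $(\mathbb{Z}/p^n\mathbb{Z})^{\times}$ with uniform measure partitions into $p-1$ residue classes $(\mathbb{Z}/p^n\mathbb{Z})^{\times}_{a_1}$, each of equal cardinality $\varphi(p^n)/(p-1)=p^{n-1}$, hence each of probability $1/(p-1)$. Conditioning $\Kl_{p^n}$ on the event $\{a\equiv a_1\bmod p\}$ yields precisely the random variable $\Kl_{p^n}(\cdot;(a_1,b_0))$ on $(\mathbb{Z}/p^n\mathbb{Z})^{\times}_{a_1}$ from \eqref{kl-pn-a1-def}. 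On the limit side, by the very construction in \eqref{Kl-bullet}, the random variable $\Kl^{\bullet}(\cdot;p)$ is the analogous mixture of $\Kl(\cdot;p;(a_1,b_0))$ against the weights $\nu(\Omega_{a_1})=1/(p-1)$.

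With this structural observation in hand, the proof is a one-line computation. By the Portmanteau characterization of convergence in law in the Polish space $C^0([0,1],\mathbb{C})$, it suffices to show that $\mathbb{E}[F(\Kl_{p^n})]\to\mathbb{E}[F(\Kl^{\bullet}(\cdot;p))]$ for every bounded continuous functional $F\colon C^0([0,1],\mathbb{C})\to\mathbb{R}$. Decomposing according to $a_1$, one has
\[
\mathbb{E}[F(\Kl_{p^n})]=\frac{1}{p-1}\sum_{a_1\in(\mathbb{Z}/p\mathbb{Z})^{\times}}\mathbb{E}\bigl[F\bigl(\Kl_{p^n}(\cdot;(a_1,b_0))\bigr)\bigr],
\]
and Theorem~\ref{main} asserts that each of the $p-1$ summands converges, as $n\to\infty$, to $\mathbb{E}[F(\Kl(\cdot;p;(a_1,b_0)))]$. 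Summing these finitely many convergent sequences yields
\[
\lim_{n\to\infty}\mathbb{E}[F(\Kl_{p^n})]=\frac{1}{p-1}\sum_{a_1\in(\mathbb{Z}/p\mathbb{Z})^{\times}}\mathbb{E}\bigl[F\bigl(\Kl(\cdot;p;(a_1,b_0))\bigr)\bigr]=\mathbb{E}\bigl[F(\Kl^{\bullet}(\cdot;p))\bigr],
\]
the last equality being the definition of $\Kl^{\bullet}(\cdot;p)$ as the corresponding mixture on $(\Omega,\mathcal{F},\nu)$.

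There is no real obstacle here beyond verifying that the measure-theoretic bookkeeping in \eqref{Kl-bullet} is compatible with the uniform measure on $(\mathbb{Z}/p^n\mathbb{Z})^{\times}$, which is immediate since each class has equal size. The fact that a finite convex combination preserves convergence in law is a standard consequence of the definition, so invoking Theorem~\ref{main} once per class $a_1\in(\mathbb{Z}/p\mathbb{Z})^{\times}$ completes the proof.
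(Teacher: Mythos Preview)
Your proof is correct and matches the paper's approach: the paper simply states that ``Theorem~\ref{main} implies the following statement, which can be thought of as gluing together the individual limits,'' without giving further details. Your argument via the Portmanteau characterization and the decomposition into the $p-1$ equally weighted classes is exactly the intended (and only natural) way to unpack that implication.
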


\begin{figure}
\begin{tikzcd}[row sep=large,column sep=large]
\textcolor{blue}{\Kl_{p^n}(\cdot;(a_1,b_0))} \arrow[d,color=blue,"n\to\infty"] \arrow[r,color=blue,dashed,"\sqcup"] & \Kl_{p^n} \arrow[r,"p\to\infty"] \arrow[d,color=blue,"n\to\infty"] & \Kl \arrow[d,equal,"n\to\infty"] \\
\textcolor{blue}{\Kl(\cdot;p;(a_1,b_0))} \arrow[r,color=blue,dashed,"\sqcup"] & \textcolor{blue}{\Kl^{\bullet}(\cdot;p)} \arrow[r,color=blue,"p\to\infty"] &\Kl
\end{tikzcd}
\caption{Commutative diagram of convergence in law, with new entries corresponding to Theorems~\ref{main} and \ref{large-p-theorem} marked in blue.}
\label{diagram-of-convergence}
\end{figure}

Further, in Theorem~\ref{large-p-theorem}, we prove that the random variable $\Kl^{\bullet}(t;p)$
converges in law to the $C^0([0,1],\mathbb{C})$-valued random variable $\Kl(t)$ given in \eqref{KKl} as $p\to\infty$. This restores harmony between the results of Theorem~\ref{main} and the results of \cite{RicottaRoyer2018,RicottaRoyerShparlinski2020}, as shown in Figure~\ref{diagram-of-convergence}.
\end{remark}

\begin{remark}
Kloosterman sums to prime power moduli exhibit another, elementary but perhaps less well popularized, curious property: if $ab\in(\mathbb{Z}/p^n\mathbb{Z})^{\times 2}$ (as is the case whenever the complete sum $\Kl_{p^n}(a,b)\neq 0$), some of the Kloosterman summands $e_{p^n}(ax+b\bar{x})$ appear with very high multiplicity. This is perhaps surprising in the light of the classical evaluation of Kloosterman sums (Lemma~\ref{kloost-eval}), whose two summands arise from provably \emph{non-singular} stationary points. It also makes for somewhat startling numerics for small $p^n$; for example, modulo 27 there are only four distinct summands (two pairs of complex conjugates).

The phenomenon is not difficult to understand. Taking for concreteness $a=b=1$ (other cases being just a change of variable away), for $1\leqslant\kappa<n/2$ and $u\in\pm 2+p^{2\kappa}(\mathbb{Z}/p^n\mathbb{Z})^{\times}$, the congruence $x+\bar{x}\equiv u\pmod{p^n}$ has exactly $2p^{\kappa}$ solutions; in particular the multiplicity becomes as high as $\asymp p^{n/2}$. In particular, for $(ab/p)=1$, as $n\to\infty$, an asymptotically \emph{positive proportion} ($100\%$ for $p=3$) of Kloosterman fractions appear with multiplicity higher than 2. On the balance between a high mutliplicity $2p^{\kappa}$ and the fact that the $\asymp p^{n-2\kappa}$ distinct such terms still follow the generic distribution, however, there is no impact on the limiting distribution in Theorem~\ref{main}. The phenomenon is not visible in the stationary phase analysis of the Kloosterman sum because the latter is concerned with roots of the derivative of the phase rather than of the phase itself. 

Nevertheless, possibly highly singular stationary points do play a crucial role in the stationary phase analysis described in \S\ref{sums-of-products-intro}, including shifts by frequencies $h$ in \eqref{after-completion} that are high powers $p^{2\kappa}$ and would detect any impact of highly popular fractions $(x+\bar{x})/p^n=u/p^n$ with $u\in\pm 2+p^{2\kappa}(\mathbb{Z}/p^n\mathbb{Z})^{\times}$.
\end{remark}

\subsection{Sums of products}
\label{sums-of-products-intro}
At the arithmetic heart of the proof of Theorem~\ref{main} are estimates on sums of products of Kloosterman sums of the form
\begin{equation}
\label{sum-of-products-intro-eq}
\sum_{a\in(\mathbb{Z}/p^n\mathbb{Z})^{[T]}}\prod_{\tau\in T}\Kl_{p^n}(a-\tau,b_0)^{\mu(\tau)},
\end{equation}
for $T\subseteq\mathbb{Z}/p^n\mathbb{Z}$, arbitrary integers $(\mu(\tau))_{\tau\in T}$, and summation being over residues $a\in(\mathbb{Z}/p^n\mathbb{Z})^{\times}$ such that $(a-\tau)b_0\in(\mathbb{Z}/p\mathbb{Z})^{\times 2}$ for every $\tau\in T$. In the case of prime moduli, such sums have been studied by Fouvry--Kowalski--Michel using deep tools of algebraic geometry, with spectacular applications~\cite{FouvryKowalskiMichel2015}. In the prime power case, Kloosterman sums can be explicitly evaluated, and, in \cite{RicottaRoyer2018}, where the modulus is a fixed power $p^n$, the phase in the exponential sum resulting from \eqref{sum-of-products-intro-eq} is replaced with a polynomial of fixed degree, and sums are estimated using a Weyl bound (essentially repeated differencing). The resulting bound \cite[Proposition 4.10]{RicottaRoyer2018}, however, degenerates badly with increasing $n$, so this route is not available as $n\to\infty$.

Instead, in the properly depth aspect, the $p$-adic method of stationary phase (see Lemma~\ref{statphase-lemma}) can be applied and leads to a condition roughly of the form
\begin{equation}
\label{statphase-cond}
\sum_{\tau\in T}\epsilon_{\tau}((a-\tau)b_0)^{-1}_{1/2}\equiv 0\pmod{p^{\lfloor n/2\rfloor}},
\end{equation}
where $(\cdot)_{1/2}$ is a branch of the $p$-adic square root (see \S\ref{sqroot-section}). In a pleasant application of the method of stationary phase, one hopes to argue that the stationary points in \eqref{statphase-cond} are non-singular, or, barring that, at least not overly singular, so that a version of Hensel's lemma applies. The possibility of singular stationary points, of which there may be very many, is a known obstacle in the estimates of exponential sums to prime power moduli, such as for example in the long-standing restriction of Burgess' bound on character sums to cube-free moduli. With fewer summands the number of singular stationary points can sometimes be controlled (see, for example, \cite[Lemma 7]{Heath-Brown1978a}), but in our case the method of moments requires that we allow an arbitrary number of summands in \eqref{statphase-cond}, an algebraically and combinatorially forbidding situation, and we must contend with the possibility of very many highly singular solutions. In fact, all solutions to \eqref{statphase-cond} could be singular if there are sufficiently high collusions among the $\tau\in T$! In Theorems~\ref{kloosterman-moments} and ~\ref{sums-of-products-theorem}, we show that such high collusions are in fact the only possibility for failure of power cancellation in \eqref{sum-of-products-intro-eq}, which clears the way to Theorem~\ref{main}. These theorems are of independent interest, and in fact Section~\ref{sums-of-products-sec} introduces a method that applies to many more exponential sums to high prime power moduli; see Remark~\ref{more-general-remark}.

\begin{remark}
Discussion in \S\ref{sums-of-products-intro} already shows that the depth-aspect limit $n\to\infty$ behaves entirely differently from the large $p$ limit in \cite{RicottaRoyer2018}. In the depth aspect, the phase in \eqref{sum-of-products-intro-eq} cannot be productively thought of as a polynomial of fixed degree, but rather more like a $p$-adically analytic function. We estimate sums of products using a stationary phase analysis including potentially highly singular critical points, which is also reflected in the shape of Theorem~\ref{kloosterman-moments}. Here we mention several other key differences.

Any analysis of a sum like \eqref{sum-of-products-intro-eq} is bound to run into difficulties as shifts $\tau\in T$  collude to a certain extent, as in $\tau\equiv\tau'\pmod{p^{\kappa}}$ for some $\tau\neq\tau'\in T$, since this is transitionary behavior between the generic and fully aligned cases. In the $p\to\infty$ limit, such difficulties can often be estimated away trivially as in \cite[Lemma 4.14]{RicottaRoyer2018}, since a condition like $\tau\equiv\tau'\pmod p$ typically happens with a ``small'' frequency $1/p$. Such an approach is, of course, not available for a fixed $p$, and more generally being divisible by $p$ or collusions modulo $p$ are not exceptional events unless they occur to an increasing power $p^m$.

Considering all paths $\Kl_{p^n}(t;(a,b_0))$ ($a\in(\mathbb{Z}/p^n\mathbb{Z})^{\times}$) as one ensemble as in Remark~\ref{all-a-remark} and \cite{RicottaRoyer2018}, a critical piece in the evaluation of the main term becomes the quantity
\[ \big|(\mathbb{Z}/p^n\mathbb{Z})^{[T]}\big|=\big|\big\{a\in(\mathbb{Z}/p^n\mathbb{Z})^{\times}:(a-\tau)b_0\in(\mathbb{Z}/p\mathbb{Z})^{\times 2}\text{ for every }\tau\in T\big\}\big|, \]
which of course essentially only depends on $(T+p\mathbb{Z})/p\mathbb{Z}$. In \cite[Proposition 4.8]{RicottaRoyer2018}, this quantity is denoted by $|A_{p^n}(\mu(\tau))|$ and estimated, for $p\to\infty$, using Weil's version of Riemann Hypothesis. For $p$ fixed, however, there is seemingly no rhyme or reason to the values of $|(\mathbb{Z}/p\mathbb{Z})^{[T]}|$, and increasingly so for larger $|T|$. It is in fact at this point that one might realize that the different classes of $a\bmod p$ need to be separated into distinct ensembles.
\end{remark}

\subsection{Rearrangement}
A complete Kloosterman sum is a natural algebro-geometric object, but an incomplete sum entails a choice of ordering of terms. For a prime modulus, there is only one ordering (the obvious one) that could be reasonably construed as natural, but for a more structured modulus there are other perfectly reasonable ways of summing, which then lead to different Kloosterman paths. We illustrate this point with a simple example.

Consider the function $f_{p^n}(\cdot;(a,b)):(\mathbb{Z}/p^{n-1}\mathbb{Z})^{\times}\to\mathbb{C}$ that groups $p$ terms in $\Kl_{p^n}(a,b)$ together:
\[ f_{p^n}(x;(a,b))=\sum_{k\bmod p}e_{p^n}\big(a(x+kp^{n-1})+b\overline{(x+kp^{n-1})}\big). \]
Since the prime $p$ is fixed, the partial sums
\begin{equation}
\label{rearranged-partial}
\Kl^{\circ}_{j;p^n}(a,b)=\frac1{p^{n/2}}\sum_{1\leqslant x\leqslant j,\,p\nmid x}f_{p^n}(x;(a,b))
\end{equation}
correspond to only a slight reordering of the complete Kloosterman sums $\Kl_{p^n}(a,b)=\Kl^{\circ}_{p^{n-1};p^n}(a,b)$. Following the steps in \S\ref{paths-intro-sec} one can correspondingly form modified Kloosterman paths $\gamma^{\circ}_{p^n}(a,b)$ by concatenating the closed segments $[\Kl^{\circ}_{j_i;p^n}(a,b_0),\Kl^{\circ}_{j_{i+1};p^n}(a,b_0)]$, and parametrize the paths $\gamma^{\circ}_{p^n}(a,b)$ by continuous functions $\Kl^{\circ}_{p^n}\in C^0([0,1],\mathbb{C})$.

Since $\overline{x+kp^r}\equiv\overline{x}-\overline{x}^2\cdot kp^r\pmod{p^{\min(2r,n)}}$ for $1\leqslant r\leqslant n$ and $x\in(\mathbb{Z}/p^n\mathbb{Z})^{\times}$, we find that in fact
\[ \Kl^{\circ}_{j;p^n}(a,b)=\frac{p}{p^{n/2}}\sum_{\substack{1\leqslant x\leqslant j, p\nmid x,\\ x^2\equiv \bar a b \bmod{p}}}e_{p^n}(ax+b\overline{x}), \]
and in particular $\Kl_{j;p^n}(a,b)=0$ (making for a boring zero path) unless $a\in b(\mathbb{Z}/p^n\mathbb{Z})^{\times 2}$. Restricting to the latter case, for a fixed $b_0\in\mathbb{Z}$ with $p\nmid b_0$, the map from $b_0(\mathbb Z/p^n\mathbb Z)^{\times 2}\to C^0([0,1],\mathbb C)$ given by
\[ a\mapsto\Kl^{\circ}_{p^n}(t;(a,b_0)) \]
can be viewed as a $C^0([0,1],\mathbb{C})$-valued random variable on the finite probability space $b_0(\mathbb Z/p^n\mathbb Z)^{\times 2}$ with the uniform probability measure, which we denote by $\Kl^{\circ}_{p^n}(t)$.

The following theorem shows that the slight rearrangement of terms that led to \eqref{rearranged-partial} produces Kloosterman paths with a universal limiting distribution. Specifically,  let $(U^{\sharp}_h)_{h\in\mathbb Z}$ be a sequence of independent identically distributed random variables of probability law $\mu$ in \eqref{def-mu}. One shows exactly as in Proposition~\ref{properties-prop} that the random series
\begin{equation}
\label{rearr-random-series}
\Kl^{\circ}(t)=\sum_{h\in\mathbb Z}\frac{e(ht)-1}{2\pi ih}U^{\sharp}_h \qquad(t\in [0,1]),
\end{equation}
where the term $h=0$ is interpreted as $tU^{\sharp}_0$, converges almost surely and in law to a random function with properties as in Proposition~\ref{properties-prop}. Then the following holds.

\begin{theorem}
\label{rearranged-paths-thm}
Let $p$ be a fixed odd prime, and let $b_0\in\mathbb{Z}$, $p\nmid b_0$. 
Then the sequence of $C^0([0,1],\mathbb C)$-valued random variables $\Kl^{\circ}_{p^n}(t)$ on $b_0(\mathbb Z/p^n\mathbb Z)^{\times 2}$ converges in law to the $C^0([0,1],\mathbb C)$-valued random variable $\Kl^{\circ}(t)$ as $n \to\infty$.
\end{theorem}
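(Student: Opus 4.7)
The plan is to mirror the two-step proof of Theorem~\ref{main}: establish convergence of $\Kl^{\circ}_{p^n}(t)$ in the sense of finite distributions via the method of moments, then prove tightness via the Kolmogorov-type criterion of Section~\ref{in-law-sec}. Both steps feed on a common arithmetic engine: an explicit completion identity tailored to the rearranged sums, combined with the sums-of-products bounds of Theorems~\ref{kloosterman-moments} and \ref{sums-of-products-theorem}. The payoff of the rearrangement will be a \emph{universal} limiting law, governed throughout by the Sato--Tate measure $\mu$.

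First I would apply $p$-adic Fourier completion to the explicit formula for $\Kl^{\circ}_{j;p^n}(a,b_0)$ given in the excerpt. Expanding the indicator $\mathbf{1}_{1 \le x \le j}$ as a Fourier series modulo $p^{n-1}$---which is natural, since the restriction $x^2 \equiv \bar a b_0 \bmod p$ makes $e_{p^n}(ax + b_0 \bar x)$ periodic in $x$ modulo $p^{n-1}$---one arrives at
\[ \Kl^{\circ}_{j;p^n}(a,b_0) = \frac{1}{p^{n-1}} \sum_{h' \bmod p^{n-1}} \alpha'_j(h')\, \Kl_{p^n}(a - ph',\, b_0), \qquad \alpha'_j(h') = \sum_{x=1}^j e_{p^{n-1}}(h' x). \]
The engine driving this identity is a sharp stationary-phase dichotomy for the dual sum $T'(h) = \sum_{x^2 \equiv \bar a b_0 \bmod p} e_{p^n}((a-h)x + b_0\bar x)$: for $a \in b_0(\mathbb{Z}/p^n\mathbb{Z})^{\times 2}$, the stationary points $x = \pm\sqrt{b_0/(a-h)}$ lie in the restricted residue classes precisely when $p \mid h$, so Lemma~\ref{statphase-lemma} forces $T'(h) = 0$ for $p \nmid h$ and $T'(ph') = p^{n/2}\Kl_{p^n}(a-ph',\,b_0)$ for $h = ph'$. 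Setting $t = j/p^{n-1}$ and approximating $\alpha'_j(h')/p^{n-1}$ by $(e(h't)-1)/(2\pi i h')$ (with $h'=0$ giving $t\,\Kl_{p^n}(a,b_0)$) yields the key approximation
\[ \Kl^{\circ}_{p^n}(t;(a,b_0)) \approx \sum_{|h'| < p^{n-1}/2} \frac{e(h't)-1}{2\pi i h'}\, \Kl_{p^n}(a - ph',\, b_0), \]
with tail error controlled by a completion estimate analogous to the one used in the proof of Theorem~\ref{main}.

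With this representation in hand, convergence of finite distributions reduces to the method of moments: any mixed moment of $\Kl^{\circ}_{p^n}(t_i;(a,b_0))$ and conjugates, averaged over $a \in b_0(\mathbb{Z}/p^n\mathbb{Z})^{\times 2}$, expands into a linear combination of sums of products $\sum_a \prod_\nu \Kl_{p^n}(a - ph'_\nu, b_0)^{\pm 1}$, which are exactly the sums treated by Theorems~\ref{kloosterman-moments} and \ref{sums-of-products-theorem} with shift family $T = \{ph'_\nu\}$. Crucially, every shift $\tau \in T$ satisfies $\tau \equiv 0 \bmod p$, so the condition $(a-\tau)b_0 \in (\mathbb{Z}/p\mathbb{Z})^{\times 2}$ reduces to $ab_0 \in (\mathbb{Z}/p\mathbb{Z})^{\times 2}$, and the natural summation set $(\mathbb{Z}/p^n\mathbb{Z})^{[T]}$ coincides (by Hensel, since $p$ is odd) with the probability space $b_0(\mathbb{Z}/p^n\mathbb{Z})^{\times 2}$. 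Consequently every $\Kl_{p^n}(a - ph',\,b_0)$ is nonzero and $\mu$-distributed (not mixed-with-zero as in the unrearranged case), producing the universal limit $\Kl^{\circ}$ of \eqref{rearr-random-series} rather than the $a_1$-dependent series of Theorem~\ref{main}. Tightness then follows by applying the moment criterion from Section~\ref{in-law-sec} to increments $\Kl^{\circ}_{p^n}(t) - \Kl^{\circ}_{p^n}(s)$ expanded via the same completion identity, once more reduced to the same sums of products.

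The main obstacle is the completion step: establishing the sharp dichotomy for $T'(h)$ requires careful $p$-adic stationary-phase bookkeeping that tracks the lift of $x$ from $\{1,\ldots,p^{n-1}\}$ to $(\mathbb{Z}/p^n\mathbb{Z})^\times$ and correctly identifies which stationary points survive the congruence constraint $x^2 \equiv \bar a b_0 \bmod p$. Once this identity is in place, the remainder of the argument runs parallel to Theorem~\ref{main}, with the additional simplification---which is the thematic novelty of Theorem~\ref{rearranged-paths-thm}---that the single universal measure $\mu$ governs every Fourier coefficient of the limit.
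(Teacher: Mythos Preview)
Your proposal is correct and follows essentially the same route as the paper's proof in \S\ref{rearranged-paths-subsec}: completion modulo $p^{n-1}$ produces the identity $\widetilde{\Kl^{\circ}_{p^n}}(t;(a,b_0))=p^{-n/2}\sum_{h\bmod p^{n-1}}\alpha^{\circ}_{p^n}(h;t)\,\Kl_{p^n}(a-ph;b_0)$, after which the moment computation and the error analysis feed directly into Theorems~\ref{kloosterman-moments} and \ref{sums-of-products-theorem} with shift set $T=\{ph'_\nu\}$, exactly as you outline. Your stationary-phase justification of the dichotomy for $T'(h)$ is a valid (and slightly more conceptual) alternative to the paper's direct observation that, under the constraint $x^2\equiv\bar ab_0\pmod p$, the summand $e_{p^n}(ax+b_0\bar x)$ is already $p^{n-1}$-periodic.

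One point deserves more care in your tightness sketch. You write that tightness follows from the completion identity together with the sums-of-products bounds, but this combination only handles the ``long'' range $|t-s|\geqslant p^{-\lambda n}$ (the analogue of Lemma~\ref{tight-lemma3}). In the intermediate range $1/(\varphi(p^{n-1})-1)\leqslant|t-s|\leqslant p^{-\lambda n}$, the paper instead needs a direct bound on the short incomplete sum $p^{-n/2}\sum_{x\in\mathcal I}f_{p^n}(x;(a,b))\ll_{\lambda}p^{-n\delta}$ for $|\mathcal I|\leqslant p^{n(1-\lambda)}$, obtained from the $p$-adic exponent pairs of Proposition~\ref{short-ish-sums}; completion plus sums-of-products would only give an error of size $p^{-\delta n}$, which does not beat $|t-s|^{1+\beta}$ when $|t-s|$ is itself as small as $p^{-\lambda n}$ with $\lambda$ much smaller than $\delta$. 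So be sure to invoke the incomplete-sum bound (or an equivalent) when you write out the analogue of Lemma~\ref{tight-lemma2}.
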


\begin{figure}
    \centering
    \includegraphics[width=0.5\textwidth]{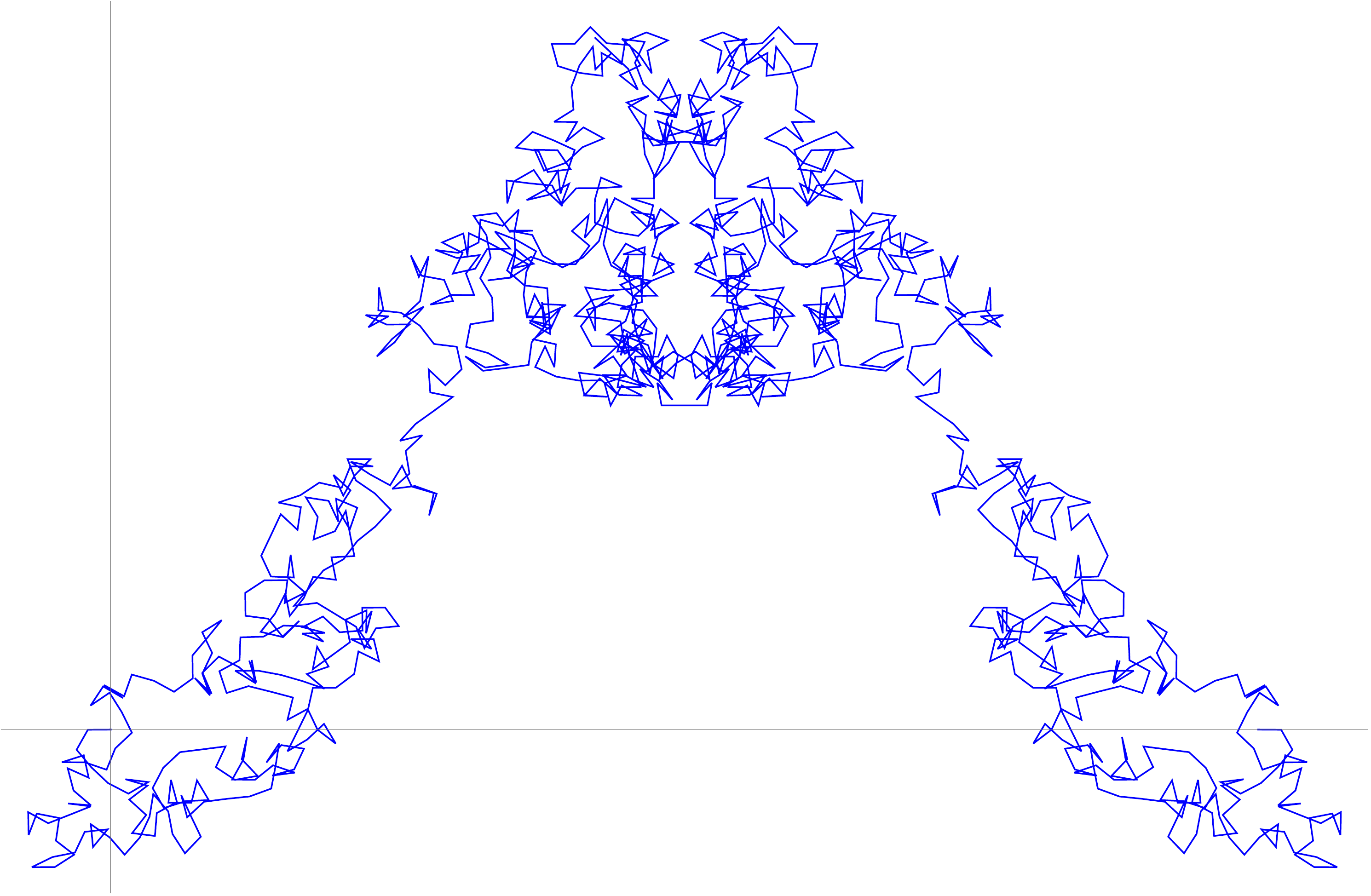}
    \caption{The Plot of the modified Kloosterman path $\gamma^{\circ}_{5^6}(9;1)$}
    \label{fig:my_label}
\end{figure}

\subsection*{Notation:} Throughout the paper, $p$ denotes a fixed odd prime. The notation $\sum^{\times}$ indicated that the summation is restricted to integers (or congruence classes) coprime to $p$. For $n\in\mathbb{Z}$ and $\tau\in\mathbb{Z}_{\geqslant 0}$, we write $p^{\tau}\exmid n$ or $\ord_pn=\tau$ to indicate that $p^{\tau}\mid n$ and $p^{\tau+1}\nmid n$; we apply the same notation to congruence classes modulo $p^t$ with $t\geqslant\tau+1$. For $x\in(\mathbb{Z}/p^n\mathbb{Z})^{\times}$ we write $\bar{x}$ for the inverse of $x$ modulo $p^n$, where the value of $n$ is clear from the context and cannot cause confusion, and we also denote $x^{-n}=\bar{x}^n$ for $n\in\mathbb{N}$.

We write $f=\mathrm{O}(g)$ or $f\ll g$ to denote that $|f|\leqslant Cg$ for some constant $C>0$, which may depend on $p$ but is otherwise absolute except as indicated by a subscript. While this is not important for us, the dependence of all constants on $p$ may easily be explicated and is always at most polynomial. Exceptionally, in \S\ref{large-p-section}, which deals with the limit $p\to\infty$, all implied constants are absolute (independent of $p$), unless otherwise indicated by a subscript.

As customary, we write $e(z)=e^{2\pi iz}$, and, for $x\in\mathbb{Z}$, we sometimes write $e_{p^n}(x)=e(x/p^n)=e^{2\pi x/p^n}$. We denote the cardinality of a finite set $S$ by $|S|$, and we use $A\sqcup B$ to denote shorthand the disjoint union of sets. Finally, $\delta_x$ denotes the Dirac measure at $x$, namely $\delta_x(E)=1$ if $x\in E$ and $0$ otherwise, where the underlying set and $\sigma$-algebra are clear from the context.

\section{Preliminaries}

\subsection{Probabilistic notions and tools}
\label{prob-sec}
In this section, we collect some standard facts about probability in complex Banach spaces, and in particular in the space $C^0([0,1],\mathbb{C})$ endowed with the sup-norm. We lean on \cite[Appendix A]{RicottaRoyer2018}, and the proofs can be found in \cite[\S{}B.11]{KowalskiBook}.

\begin{definition}
Let $Y$ be a Banach space, let $(X_n)$ be a sequence of $Y$-valued random variables on probability spaces $(\Omega_n,\mathcal{F}_n,\nu_n)$, and let $X$ be a $Y$-valued random variable on $(\Omega,\mathcal{F},\nu)$.
\begin{enumerate}
\item If $(\Omega_n,\mathcal{F}_n,\nu_n)=(\Omega,\mathcal{F},\nu)$ for every $n\in\mathbb{N}$, we say that $(X_n)$ converges \emph{almost surely} if $\nu(\{\omega\in\Omega:\lim X_n(\omega)\text{ exists in }Y\})=1$, and we say that $X_n$ converges to $X$ almost surely if $\nu(\{\omega\in\Omega:\lim X_n(\omega)=X(\omega)\})=1$.
\item We say that $(X_n)$ converges \emph{in law} to $X$ if, for every continuous and bounded map $\varphi:Y\to\mathbb{C}$, $\lim\mathbb{E}(\varphi(X_n))=\mathbb{E}(\varphi(X))$.
\item If $Y=C^0([0,1],\mathbb{C})$, we say that $(X_n)$ converges to $X$ in the sense of \emph{finite distributions} if, for every $k\geqslant 1$ and all $0\leqslant t_1<t_2<\dots<t_k\leqslant 1$, the sequence of $\mathbb{C}^k$-valued random vectors $(X_n(t_1),\dots,X_n(t_k))$ converges in law to the $\mathbb{C}^k$-valued random vector $(X(t_1),\dots,X(t_k))$.
\item If $Y$ is separable, we say that the sequence $(X_n)$ is \emph{tight} if, for every $\epsilon>0$, there exists a compact subset $K\subseteq Y$ such that, for every $n\geqslant 1$, $\nu_n(\{X_n\in K\})\geqslant 1-\epsilon$.
\end{enumerate}
\end{definition}

The notion of tightness of a sequence of random variables provides a practical bridge from convergence in the sense of finite distributions to the substantially stronger claim of convergence in law, using the following criteria.

\begin{proposition}[Prokhorov's criterion]
\label{prokhorov}
Let $(X_n)_{n=1}^{\infty}$ and $X$ be $C^0([0,1],\mathbb{C})$-valued random variables. If the sequence $(X_n)$ converges to $X$ as $n\to\infty$ in the sense of finite distributions, and if the sequence $(X_n)$ is tight, then the sequence $(X_n)$ converges to $X$ as $n\to\infty$ in law.
\end{proposition}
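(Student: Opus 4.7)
The plan is to deduce Prokhorov's criterion in the form stated from the standard direct Prokhorov compactness theorem together with the fact that on a Polish space the law of a continuous random function is determined by its finite-dimensional distributions. The space $C^0([0,1],\mathbb{C})$ equipped with the sup-norm is a separable Banach space, hence a Polish space, so both tools are available.

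First I would invoke the direct Prokhorov theorem: on a Polish space, any tight family of Borel probability measures is relatively compact in the topology of weak convergence. Applied to the laws $(\nu_n\circ X_n^{-1})$, the tightness hypothesis yields that every subsequence $(X_{n_k})$ contains a further subsequence $(X_{n_{k_j}})$ converging in law, as $j\to\infty$, to some $C^0([0,1],\mathbb{C})$-valued random variable $Y$ on some probability space.

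Next I would identify the subsequential limit. For any $k\geqslant 1$ and $0\leqslant t_1<\dots<t_k\leqslant 1$, the evaluation map $\pi_{t_1,\dots,t_k}\colon C^0([0,1],\mathbb{C})\to\mathbb{C}^k$, $f\mapsto(f(t_1),\dots,f(t_k))$, is continuous, and convergence in law is preserved by continuous pushforward. Therefore $(X_{n_{k_j}}(t_1),\dots,X_{n_{k_j}}(t_k))$ converges in law to $(Y(t_1),\dots,Y(t_k))$. But by the finite-distribution hypothesis on $(X_n)$, the same sequence converges in law to $(X(t_1),\dots,X(t_k))$. Hence $X$ and $Y$ have the same finite-dimensional distributions. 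Since $C^0([0,1],\mathbb{C})$ is separable, its Borel $\sigma$-algebra coincides with the $\sigma$-algebra generated by the coordinate projections $\{\pi_t:t\in[0,1]\}$, so the law of a $C^0$-valued random variable is uniquely determined by its finite-dimensional marginals. Thus $Y$ has the same law as $X$, i.e.\ $X_{n_{k_j}}\to X$ in law.

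Finally, to conclude convergence of the full sequence I would apply the standard subsequence principle for convergence in law: a sequence $(X_n)$ converges in law to $X$ if and only if every subsequence has a further subsequence converging in law to $X$. This is immediate from the definition by testing against bounded continuous $\varphi\colon C^0([0,1],\mathbb{C})\to\mathbb{C}$ and reducing the convergence $\mathbb{E}(\varphi(X_n))\to\mathbb{E}(\varphi(X))$ of complex numbers to the analogous subsequence criterion in $\mathbb{C}$. Since we have just shown that every subsequence of $(X_n)$ admits a further subsequence converging in law to $X$, the whole sequence converges in law to $X$. The only nontrivial ingredient is the direct Prokhorov compactness theorem, which is where Polishness of $C^0([0,1],\mathbb{C})$ is used; the remainder of the argument is a routine separation-and-identification step.
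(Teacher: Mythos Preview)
Your proof is correct and follows the standard route: Prokhorov compactness on the Polish space $C^0([0,1],\mathbb{C})$ gives subsequential weak limits, finite-dimensional marginals identify each such limit with the law of $X$ (since the Borel $\sigma$-algebra of $C^0$ is generated by point evaluations), and the subsequence principle upgrades this to full convergence. The paper does not actually prove this proposition; it is stated as a standard fact with a reference to \cite[\S{}B.11]{KowalskiBook}, so there is no alternative argument to compare against --- your write-up is exactly the textbook proof one would find there.
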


\begin{proposition}[Kolmogorov's criterion for tightness]
\label{kolmogorov}
Let $(X_n)_{n=1}^{\infty}$ be a sequence of $C^0([0,1],\mathbb{C})$-valued random variables. If there exist $\alpha,\delta>0$ such that
\[ \forall s,t\in [0,1]^2,\quad \mathbb{E}\big(|X_n(s)-X_n(t)|^{\alpha}\big)\ll|s-t|^{1+\delta}, \]
then the sequence $(X_n)$ is tight.
\end{proposition}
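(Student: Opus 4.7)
The plan is to exhibit, for every $\epsilon>0$, a compact set $K_\epsilon\subseteq C^0([0,1],\mathbb{C})$ with $\nu_n(X_n\in K_\epsilon)\geqslant 1-\epsilon$ uniformly in $n$. By Arzelà--Ascoli, compact subsets of $C^0([0,1],\mathbb{C})$ are precisely the closed, uniformly bounded, equicontinuous families, so I would extract both a uniform modulus of continuity and a uniform sup bound from the moment hypothesis via a classical dyadic chaining argument.

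First, for each $n$ and $j\geqslant 1$, introduce the maximal dyadic increment
\[ M_j^{(n)}=\max_{0\leqslant i<2^j}\bigl|X_n((i+1)2^{-j})-X_n(i2^{-j})\bigr|. \]
Markov's inequality combined with the hypothesis gives the tail bound
\[ \nu_n\bigl(M_j^{(n)}>\eta\bigr)\leqslant \eta^{-\alpha}\sum_{i=0}^{2^j-1}\mathbb{E}\bigl(|X_n((i+1)2^{-j})-X_n(i2^{-j})|^{\alpha}\bigr)\ll \eta^{-\alpha}\cdot 2^{-j\delta}. \]
Setting $\eta_j=2^{-j\delta/(2\alpha)}$, the right-hand side becomes $\ll 2^{-j\delta/2}$ and hence is summable in $j$ with a constant uniform in $n$. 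A routine telescoping along binary expansions yields, for any continuous $f$ and any $s,t\in[0,1]$ with $|s-t|\leqslant 2^{-k}$, the bound $|f(s)-f(t)|\leqslant 2\sum_{j\geqslant k}M_j(f)$; continuity of $X_n$ is used here to pass from dyadic endpoints to arbitrary $s,t$. Choosing $j_0=j_0(\epsilon)$ so that $\sum_{j\geqslant j_0}\nu_n(M_j^{(n)}>\eta_j)\leqslant\epsilon/2$, the event $A_n=\{M_j^{(n)}\leqslant\eta_j\text{ for all }j\geqslant j_0\}$ has $\nu_n$-probability at least $1-\epsilon/2$, and on it the modulus of continuity satisfies $\omega(X_n,2^{-k})\leqslant 2\sum_{j\geqslant k}\eta_j\to 0$ as $k\to\infty$, delivering uniform equicontinuity.

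For the uniform sup bound I would anchor at $0$: in the applications envisaged in this paper $X_n(0)$ is deterministically small (e.g.\ $\Kl_{p^n}(0)$ is at most a single normalized term), so $|X_n(0)|\leqslant M$ holds automatically; in general one adjoins tightness of $X_n(0)$ as a side hypothesis and picks $M$ with $\nu_n(|X_n(0)|>M)\leqslant\epsilon/2$. Then
\[ K_\epsilon=\Bigl\{f\in C^0([0,1],\mathbb{C}):|f(0)|\leqslant M,\ \omega(f,2^{-k})\leqslant 2\sum_{j\geqslant k}\eta_j\ \text{for all}\ k\geqslant j_0\Bigr\} \]
is closed, uniformly bounded and equicontinuous, hence compact by Arzelà--Ascoli, and satisfies $\nu_n(X_n\in K_\epsilon)\geqslant 1-\epsilon$ for every $n$. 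The main technical step is the dyadic chaining, which is standard but needs to be carried out carefully in passing from dyadic to arbitrary pairs $(s,t)$; the more subtle conceptual point is the implicit need for an anchoring tightness at a single point of $[0,1]$, which in this paper is silently supplied by the deterministic starting value of the Kloosterman paths.
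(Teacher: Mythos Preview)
The paper does not prove this proposition; it is imported with attribution to \cite[\S{}B.11]{KowalskiBook}. Your argument is the standard Kolmogorov--Chentsov chaining proof and is correct in its essentials: the union-bound/Markov step giving $\nu_n(M_j^{(n)}>\eta)\ll\eta^{-\alpha}2^{-j\delta}$, the choice $\eta_j=2^{-j\delta/(2\alpha)}$, and the telescoping bound on the modulus of continuity are all fine.

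You are also right to flag the anchoring issue. As literally stated, the proposition is false without some control at a single point: the sequence $X_n\equiv n$ satisfies the moment hypothesis trivially yet is not tight. The usual formulation (as in Billingsley or Kowalski) adds the hypothesis that $(X_n(0))$ is tight in $\mathbb{C}$, and you correctly observe that in all applications in this paper the paths satisfy $X_n(0)=0$ (or $|X_n(0)|\leqslant p^{-n/2}$), so the omission is harmless in context. This is a good catch; the proposition should be read with that implicit side condition.
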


\subsection{Square roots modulo \texorpdfstring{$p^n$}{p to n}}
\label{sqroot-section}

The explicit evaluation of Kloosterman sums $\Kl_{p^n}(a,b)$ to a proper prime power modulus (see Lemma~\ref{kloost-eval}) features exponentials with phases that are solutions of congruences of the form $x^2\equiv ab\pmod{p^r}$, that is, square roots modulo high powers of $p$. In this section, we describe the construction and properties of these square roots. We keep in mind the paradigm that these are really restrictions of branches of the $p$-adic square root (in analogy with the exponentials appearing in the asymptotics of $J$-Bessel functions, the archimedean analogue of Kloosterman sums) but avoid the $p$-adic language. For more details and a fully $p$-adic perspective, we refer to \cite[\S2.4]{BlomerMilicevic2015}.

For every $r\in(\mathbb{Z}/p\mathbb{Z})^{\times 2}$, there are exactly two classes $s\in\mathbb{Z}/p\mathbb{Z}$ such that $s^2=r$. We fix once and for all a choice function $s:(\mathbb{Z}/p\mathbb{Z})^{\times 2}\to(\mathbb{Z}/p\mathbb{Z})^{\times}$ such that $s(r)^2=r$ for every $r\in(\mathbb{Z}/p\mathbb{Z})^{\times 2}$ (any of the $2^{(p-1)/2}$ such choices will do). For every $x\in(\mathbb{Z}/p^n\mathbb{Z})^{\times 2}$, by Hensel's lemma (Lemma~\ref{hensel-lemma} in the non-singular case $\rho=0$) there exists as unique $u\in(\mathbb{Z}/p^n\mathbb{Z})^{\times}$ such that $u^2\equiv x\pmod{p^n}$ and $u\equiv s(x)\pmod p$; this gives rise to a square root $u:(\mathbb{Z}/p^n\mathbb{Z})^{\times 2}\to(\mathbb{Z}/p^n\mathbb{Z})^{\times}$, which we also denote by $u=u_{1/2}(x)=x_{1/2}$ and write $x_{1/2}^k=(x_{1/2})^k$ for $k\in\mathbb{Z}$.

The square-root on $(\mathbb{Z}/p^n\mathbb{Z})^{\times\, 2}$ thus defined satisfies the following differentiability-like property for $x\in(\mathbb{Z}/p^n\mathbb{Z})^{\times\, 2}$, $t\in\mathbb{Z}/p^n\mathbb{Z}$, $\kappa\geqslant 1$, and $m\in\mathbb{Z}$:
\begin{equation}
\label{sqroot-diff}
(x+p^{\kappa}t)^m_{1/2}\equiv x^m_{1/2}+\tfrac12 m\cdot x^{m-2}_{1/2}\cdot p^{\kappa}t+\tfrac18 m(m-2)\cdot x^{m-4}_{1/2}\cdot p^{2\kappa}t^2\pmod{p^{\min(3\kappa,n)}}.
\end{equation}
This is easily verified, since the squares of both sides agree modulo $p^{2\kappa}$, and both sides agree modulo $p$; cf.~\cite[(2.6)]{BlomerMilicevic2015}.

\subsection{Method of stationary phase}
In this section, we collect facts about the so-called $p$-adic method of stationary phase, a powerful tool in the study of complete exponential sums modulo prime powers analogous to the classical method of stationary phase for oscillatory exponential integrals. We phrase our results here with emphasis on differentiability-like properties \eqref{diffble-eq1} and \eqref{diffble-eq2} but without invoking $p$-adic language. For more details, we refer to \cite[Lemmata 12.2 and 12.3]{IwaniecKowalski2004} for a formulation with phases that are rational functions, or to \cite[Lemma~7]{BlomerMilicevic2015} for a general statement with a more $p$-adic analytic perspective.

\begin{lemma}[Method of stationary phase]
\label{statphase-lemma}
Let $1\leqslant\kappa_0\leqslant n$, and let $T\subseteq\mathbb{Z}/p^n\mathbb{Z}$ be a set invariant under translations by $p^{\kappa_0}\mathbb{Z}/p^n\mathbb{Z}$.
\begin{enumerate}
\item\label{MSP-claim1} Suppose that functions $f,f_1:T\to\mathbb{Z}/p^n\mathbb{Z}$ satisfy
\begin{equation}
\label{diffble-eq1}
f(x+p^{\kappa}t)\equiv f(x)+f_1(x)\cdot p^{\kappa}t\pmod{p^{2\kappa}}
\end{equation}
for all $x\in T$, $t\in\mathbb{Z}/p^n\mathbb{Z}$, and $\kappa\geqslant\kappa_0$. Then, for every $\kappa\geqslant\max(\kappa_0,n/2)$, the set $\{x\in T:f_1(x)\equiv 0\bmod p^{n-\kappa}\}$ is invariant under translations by $p^{\kappa}\mathbb{Z}/p^n\mathbb{Z}$, and
\[ \sum_{x\in T}e\left(\frac{f(x)}{p^n}\right)=\sum_{\substack{x\in T\\f_1(x)\equiv 0\bmod{p^{n-\kappa}}}}e\left(\frac{f(x)}{p^n}\right)=p^{n-\kappa}\sum_{\substack{x\in T/p^{\kappa}\mathbb{Z}\\f_1(x)\equiv 0\bmod{p^{n-\kappa}}}}e\left(\frac{f(x)}{p^n}\right). \]
\item Suppose that functions $f,f_1:T\to\mathbb{Z}/p^n\mathbb{Z}$, $f_2:T\to(\mathbb{Z}/p^n\mathbb{Z})^{\times}$  satisfy
\begin{equation}
\label{diffble-eq2}
f(x+p^{\kappa}t)\equiv f(x)+f_1(x)\cdot p^{\kappa}t+\bar{2}f_2(x)\cdot p^{2\kappa}t^2\pmod{p^{2\kappa+1}}
\end{equation}
for all $x\in T$, $t\in\mathbb{Z}/p^n\mathbb{Z}$, and $\kappa\geqslant\kappa_0$. Then,
writing $n=2\kappa+\rho$ with $\rho\in\{0,1\}$,
\[ \sum_{x\in T}e\left(\frac{f(x)}{p^n}\right)=p^{n/2}
\sum_{\substack{x_0\in T/p^{\kappa}\mathbb{Z}\\ f_1(x_0)\equiv 0\bmod p^{\kappa}}}
\epsilon\big(2f_2(x_0),p^{\rho}\big)e\bigg(\frac{f(x_0)-\overline{2f_2(x_0)}{f_1(x_0)^2}}{p^n}\bigg), \]
where
$\epsilon(\cdot,1)=1$ and $\epsilon(\cdot,p)=(\cdot/p)i^{(\iota-1)/2}$ for $p\equiv \iota\bmod 4$, $\iota\in\{1,3\}$.
\end{enumerate}
\end{lemma}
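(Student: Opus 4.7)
The plan is to partition $T$ into cosets of $p^{\kappa}\mathbb{Z}/p^{n}\mathbb{Z}$ (allowed since $T$ is invariant under $p^{\kappa_0}$-translations and $\kappa\geqslant\kappa_0$), evaluate the inner sum on each coset via the linear or quadratic expansion, and identify the surviving ``stationary'' cosets. For Part~(1), I would take $\kappa\geqslant\max(\kappa_0,n/2)$ and first establish the cocycle-type identity
\[ f_1(x+p^{\kappa}t)\equiv f_1(x)\pmod{p^{\kappa}} \]
by computing $f(x+p^{\kappa}(t+s))$ in two different ways using \eqref{diffble-eq1}: once directly, once iteratively through the intermediate point $x+p^{\kappa}t\in T$. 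Equating the two expansions modulo $p^{2\kappa}$ and reading off the coefficient of $p^{\kappa}s$ produces the claim, and since $\kappa\geqslant n-\kappa$ it a fortiori yields the asserted $p^{\kappa}$-invariance of $\{x\in T:f_1(x)\equiv 0\bmod p^{n-\kappa}\}$. The sum itself is then immediate: since $2\kappa\geqslant n$, the hypothesis reduces modulo $p^{n}$ to a purely linear dependence on $t$, and summing $t\in\mathbb{Z}/p^{n-\kappa}\mathbb{Z}$ in each coset gives either $p^{n-\kappa}e_{p^n}(f(x_0))$ or $0$ according as $f_1(x_0)\equiv 0\bmod p^{n-\kappa}$ or not.

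For Part~(2), I would take $\kappa=\lfloor n/2\rfloor$, so $n=2\kappa+\rho$ with $\rho\in\{0,1\}$. Within each $p^{\kappa}$-coset, \eqref{diffble-eq2} reduces the phase modulo $p^{n}$ to the genuine quadratic $f(x_0)+f_1(x_0)p^{\kappa}t+\bar 2 f_2(x_0)p^{2\kappa}t^{2}$. If $\rho=0$, the quadratic term already vanishes modulo $p^{n}$, Part~(1) with $\kappa=n/2$ applies verbatim, and the correction $\overline{2f_2(x_0)}f_1(x_0)^{2}$ is automatically $\equiv 0\bmod p^{n}$ on stationary cosets (since $f_1(x_0)^{2}\equiv 0\bmod p^{2\kappa}$); this matches the formula with $\epsilon(\cdot,1)=1$. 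If $\rho=1$, in the stationary case $f_1(x_0)=p^{\kappa}A'$, completion of the square in the exponent and a shift in $t$ collapse the inner sum over $t\in\mathbb{Z}/p^{\kappa+1}\mathbb{Z}$ to $p^{\kappa}\sum_{t\bmod p}e_p(\bar 2 f_2(x_0)t^{2})$ times the extracted phase $e_{p^n}(-\overline{2f_2(x_0)}f_1(x_0)^{2})$; the standard quadratic Gauss sum evaluation
\[ \sum_{t\bmod p}e_p(ct^{2})=\left(\tfrac{c}{p}\right)i^{(\iota-1)/2}\sqrt{p} \]
gives prefactor $p^{\kappa+1/2}\epsilon(2f_2(x_0),p)=p^{n/2}\epsilon(2f_2(x_0),p)$, as claimed.

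The vanishing in the non-stationary case with $\rho=1$ I would handle by a symmetry argument: the substitution $t\mapsto t+up$ ($u\in\mathbb{Z}/p^{\kappa}\mathbb{Z}$) preserves the summation range $\mathbb{Z}/p^{\kappa+1}\mathbb{Z}$ and, because the $p^{2\kappa+1}$- and $p^{2\kappa+2}$-terms it introduces vanish modulo $p^{n}=p^{2\kappa+1}$, multiplies every summand by $e_{p^\kappa}(f_1(x_0)u)$; hence the inner sum $I$ satisfies $I=e_{p^\kappa}(f_1(x_0)u)I$ for every $u$, forcing $I=0$ as soon as $f_1(x_0)\not\equiv 0\bmod p^{\kappa}$. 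Assembling stationary contributions yields the asserted formula. The core content is just a quadratic Gauss sum evaluation after a Taylor expansion; the principal bookkeeping challenge — and likely source of error — is carefully tracking the several precisions $p^{2\kappa}$, $p^{2\kappa+1}$, and $p^{n}$ at play across the substitutions and reductions.
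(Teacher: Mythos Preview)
Your proof is correct and follows essentially the same route as the paper's: coset decomposition plus orthogonality for Part~(1), and reduction to a quadratic Gauss sum for Part~(2). Your two-variable expansion of $f(x+p^{\kappa}(t+s))$ to extract $f_1(x+p^{\kappa}t)\equiv f_1(x)\pmod{p^{\kappa}}$ is in fact a slight streamlining of the paper's argument, which instead swaps the roles of $x$ and $x+p^{\kappa}t$ (yielding the congruence only for $p\nmid t$) and then bootstraps via $t'=1$, $t'=t-1$; your version handles all $t$ at once. For Part~(2) the paper is terse (``immediate if $n=2\kappa$; the odd case is similar, using the classical evaluation of the quadratic Gauss sum'') and defers to \cite{BlomerMilicevic2015}, whereas you spell out the completion of the square and the non-stationary vanishing via the shift $t\mapsto t+up$ --- all of which is correct and matches the intended argument.
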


\begin{proof}
For $p\nmid t$, the invariance of the set $\{x\in T:f_1(x)\equiv 0\bmod p^{n-\kappa}\}$ under translation by $p^{\kappa}t$ follows immediately from applying \eqref{diffble-eq1} in the form $f(x)\equiv f(x+p^{\kappa}t)+f_1(x+p^{\kappa}t)\cdot p^{\kappa}(-t)\pmod{p^{2\kappa}}$; otherwise, we bootstrap from the same conclusion for $t'=1$ and $t'=t-1$. The remaining claim in \eqref{MSP-claim1} follows immediately by orthogonality from
\[ \sum_{x\in T}e\left(\frac{f(x)}{p^n}\right)=\frac1{p^{n-\kappa}}\sum_{x\in T}e\left(\frac{f(x)}{p^n}\right)\sum_{t\bmod p^{n-\kappa}}e\left(\frac{f_1(x)\cdot p^{\kappa}t}{p^n}\right). \]
The second claim is immediate if $n=2\kappa$. The odd case is similar, using the classical evaluation of the quadratic Gauss sum; see  \cite[Lemma~7]{BlomerMilicevic2015}.
\end{proof}

\begin{lemma}
\label{kloost-eval}
Let $a\in\mathbb{Z}/p^n\mathbb{Z}$, $b\in(\mathbb{Z}/p^n\mathbb{Z})^{\times}$. Then,  $\Kl_{p^n}(a,b)=0$ if $ab\not\in (\mathbb Z/p^n\mathbb Z)^{\times 2}$. Otherwise, if $ab\in (\mathbb Z/p^n\mathbb Z)^{\times 2}$,
\[ \Kl_{p^n}(a,b)=2\left(\frac{(ab)_{1/2}}{p^n}\right)\mathrm{Re}\big[\epsilon_{p^n}e_{p^n}\big(2(ab)_{1/2}\big)\big], \]
where $(\cdot)_{1/2}$ refers to the square root introduced in \S\ref{sqroot-section}, $(\cdot/p^n)$ is the Jacobi symbol, and
\[\epsilon_{p^n}=\begin{cases}
1,&\text{if $2\mid n$ or $p\equiv 1\bmod 4$},\\
i, &\text{if $2\nmid n$ and $p\equiv 3\bmod 4$}.
\end{cases}\]
\end{lemma}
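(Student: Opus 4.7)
The plan is to apply Lemma~\ref{statphase-lemma}(2) directly to the phase $f(x)=ax+b\bar x$ of $\Kl_{p^n}(a,b)$ on $T=(\mathbb{Z}/p^n\mathbb{Z})^{\times}$, which is invariant under translations by $p\mathbb{Z}/p^n\mathbb{Z}$. The key input is the Taylor-like expansion $\overline{x+p^{\kappa}t}\equiv\bar x-\bar x^{2}p^{\kappa}t+\bar x^{3}p^{2\kappa}t^{2}\pmod{p^{3\kappa}}$ valid for $x\in(\mathbb Z/p^n\mathbb Z)^{\times}$, which gives
\[ f(x+p^{\kappa}t)\equiv f(x)+(a-b\bar x^{2})\cdot p^{\kappa}t+b\bar x^{3}\cdot p^{2\kappa}t^{2}\pmod{p^{2\kappa+1}}, \]
so that \eqref{diffble-eq2} holds with $f_{1}(x)=a-b\bar x^{2}$ and $f_{2}(x)=2b\bar x^{3}\in(\mathbb Z/p^n\mathbb Z)^{\times}$.

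For the vanishing claim, I apply Lemma~\ref{statphase-lemma}(1) with $\kappa=\lceil n/2\rceil$. The stationary condition $f_{1}(x)\equiv 0\pmod{p^{\kappa}}$ reads $b\equiv ax^{2}\pmod{p^{\kappa}}$, i.e.\ $(ax)^{2}\equiv ab\pmod{p^{\kappa}}$. If $p\mid a$ (equivalently $p\mid ab$, since $b$ is a unit), this is impossible, and if $p\nmid ab$ but $ab\not\in(\mathbb{Z}/p^{\kappa}\mathbb{Z})^{\times 2}$, there are no solutions either; in both scenarios, $\Kl_{p^n}(a,b)=0$. Since for $p\nmid ab$ the squares mod $p^{\kappa}$ and mod $p^n$ agree (by Hensel), this shows $\Kl_{p^n}(a,b)=0$ whenever $ab\not\in(\mathbb{Z}/p^n\mathbb{Z})^{\times 2}$.

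Now suppose $ab\in(\mathbb{Z}/p^n\mathbb{Z})^{\times 2}$ and set $\eta=(ab)_{1/2}$. The stationary points mod $p^{\kappa}$, with $\kappa=\lfloor n/2\rfloor$, are precisely the two classes $x_{0}=\varepsilon\bar a\eta$ for $\varepsilon\in\{\pm 1\}$, and I choose these distinguished lifts to $(\mathbb Z/p^n\mathbb Z)^{\times}$, which satisfy $f_{1}(x_{0})\equiv 0\pmod{p^n}$ (so that the quadratic correction $\overline{2f_{2}(x_{0})}f_{1}(x_{0})^{2}/p^n$ in Lemma~\ref{statphase-lemma}(2) vanishes). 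A direct computation using $x_{0}^{2}\equiv \bar a^{2}ab\pmod{p^n}$ gives
\[ f(x_{0})=2\varepsilon\eta,\qquad 2f_{2}(x_{0})=4\varepsilon a^{2}\eta^{-1}. \]

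Finally, I assemble the answer from the two stationary contributions via Lemma~\ref{statphase-lemma}(2). If $n=2\kappa$ is even, $\epsilon(\cdot,1)=1$ and each $\varepsilon=\pm 1$ contributes $p^{n/2}e_{p^n}(2\varepsilon\eta)$, matching the stated formula since $(\eta/p^n)=1$ and $\epsilon_{p^n}=1$. If $n=2\kappa+1$ is odd, the Gauss factor is $\epsilon(4\varepsilon a^{2}\eta^{-1},p)=(\varepsilon\eta^{-1}/p)\,i^{(\iota-1)/2}$ with $p\equiv \iota\pmod 4$; separating $p\equiv 1$ (where both signs contribute with the same factor $(\eta/p^n)=(\eta/p)$) from $p\equiv 3$ (where the two signs combine as $i\,e_{p^n}(2\eta)-i\,e_{p^n}(-2\eta)=2\operatorname{Re}[i\,e_{p^n}(2\eta)]$) yields the formula with $\epsilon_{p^n}=1$ and $\epsilon_{p^n}=i$ respectively. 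The main (if mild) obstacle is this bookkeeping of Jacobi symbols and signs across the four parity/residue cases; everything else is mechanical once Lemma~\ref{statphase-lemma} is in hand.
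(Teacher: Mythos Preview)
Your proof is correct and follows essentially the same approach as the paper's one-line sketch: apply the stationary phase Lemma~\ref{statphase-lemma} to the phase $f(x)=ax+b\bar x$, identify the stationary points via the Taylor expansion of $\bar x$, and finish with the Gauss-sum factor in the odd-$n$ case. You have simply filled in the details (including the four-way case split for $\epsilon_{p^n}$ and the Jacobi-symbol bookkeeping) that the paper delegates to the references \cite[Lemma~8]{BlomerMilicevic2015} and \cite[Eq.~(12.39)]{IwaniecKowalski2004}.
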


Lemma~\ref{kloost-eval} follows immediately by an application of the stationary phase Lemma~\ref{statphase-lemma}, using the differentiability property~\eqref{sqroot-diff}, and finally using the classical evaluation of the quadratic Gauss sum modulo $p$; see, e.g.~\cite[Lemma~8]{BlomerMilicevic2015}, \cite[Eq. (12.39)]{IwaniecKowalski2004}.

\section{Reduction steps and the core argument}
\label{core-sec}
In this section, we collect all required properties of the limit random series $\Kl(t;p;(a_1,b_0))$ defined in \eqref{our-random-series} and perform all steps needed to reduce the claim of convergence in the sense of finite distributions of Theorem~\ref{main} to a crucial estimate on sums of products of Kloosterman sums. This latter estimate is proved in section \ref{sums-of-products-sec}.

Recall that $a_1,b_0\in(\mathbb{Z}/p\mathbb{Z})^{\times}$ are fixed. Let $k\geqslant 1$ be a fixed integer, and let $n\geqslant 2$. Let $\bm{t}=(t_1,...,t_k)$ be a fixed $k$-tuple in $[0,1]^k$,
$\bm{n}=(n_1,\dots,n_k),\bm{m}=(m_1,\dots,m_k)$ be two fixed $k$-tuples of non-negative integers. Let $\ell(\bm{m}+\bm{n})=\sum_{i=1}^k(m_i+n_i)$. We define the complex moments by
\begin{equation}
\label{complex-moment-def}
\mathcal{M}_{p^n}(\bm{t};\bm{m},\bm{n};b_0)=\frac{1}{p^{n-1}}\sum_{\substack{a\in (\mathbb Z/p^n\mathbb Z)^{\times}\\ a\equiv a_1 \bmod p}}\prod_{i=1}^k \overline{\Kl_{p^n}(t_i;(a,b_0))}^{m_i}\Kl_{p^n}(t_i;(a,b_0))^{n_i}.
\end{equation}
Our aim is to prove the following proposition:
\begin{proposition}
\label{complex-moments-prop}
There exists a constant $\delta=\delta(\|\bm{m}\|_1+\|\bm{n}\|_1)>0$ such that the complex moment $\mathcal{M}_{p^n}(\bm{t};\bm{m};\bm{n};b_0)$ defined in \eqref{complex-moment-def} satisfies
\[ \mathcal{M}_{p^n}(\bm{t};\bm{m},\bm{n};b_0)=\mathbb E\Big(\prod_{i=1}^k\overline{\Kl(t_i;p;(a_1,b_0))}^{m_i}\Kl(t_i;p;(a_1,b_0))^{n_i}\Big)+\mathrm{O}(p^{-\delta n}), \]
where $\Kl(t_i;p;(a_1,b_0))$ is the random variable defined by \eqref{our-random-series}.
\end{proposition}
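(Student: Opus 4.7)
The plan is to reduce the computation of $\mathcal{M}_{p^n}(\bm{t};\bm{m},\bm{n};b_0)$ to averages of products of \emph{complete} Kloosterman sums via the completion method, then extract a main term using the power-savings/power-alignment dichotomy announced for Section~\ref{sums-of-products-sec}, and finally match the resulting expression term-by-term with the moment of the random Fourier series in \eqref{our-random-series}.

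\smallskip

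\textbf{Step 1 (Completion).} I first apply the completion formula (Lemma~\ref{completion-lemma}) to each partial sum factor $\Kl_{p^n}(t_i;(a,b_0))$, writing it as a linear combination over frequencies $|h|<p^n/2$ of the shifted complete sums $\Kl_{p^n}(a-h,b_0)$, with coefficients $\alpha_h(t_i)$ that closely approximate $\tfrac{e(ht_i)-1}{2\pi ih}$ (and $t_i$ if $h=0$) and decay like $O(\min(t_i,1/|h|))$. Expanding the $\ell(\bm{m}+\bm{n})$-fold product in \eqref{complex-moment-def} presents $\mathcal{M}_{p^n}$ as
\[
\mathcal{M}_{p^n}(\bm{t};\bm{m},\bm{n};b_0)=\sum_{\bm{h},\bm{h}'} C(\bm{h},\bm{h}';\bm{t})\cdot\mathcal{A}(\bm{h},\bm{h}')+\text{(controlled truncation error)},
\]
where $C(\bm{h},\bm{h}';\bm{t})$ collects the coefficients $\overline{\alpha_{h_i^{(j)}}(t_i)}$ and $\alpha_{{h'}_i^{(j)}}(t_i)$, and
\[
\mathcal{A}(\bm{h},\bm{h}')=\frac{1}{p^{n-1}}\sum^{\times}_{\substack{a\in(\mathbb{Z}/p^n\mathbb{Z})^{\times}\\ a\equiv a_1\bmod p}}\prod_{i,j}\overline{\Kl_{p^n}(a-h_i^{(j)},b_0)}\cdot\prod_{i,j}\Kl_{p^n}(a-{h'}_i^{(j)},b_0)
\]
is a sum of products of complete Kloosterman sums precisely of the type treated in Section~\ref{sums-of-products-sec}.

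\smallskip

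\textbf{Step 2 (Sums of products).} I then invoke Theorems~\ref{kloosterman-moments} and \ref{sums-of-products-theorem} to analyze $\mathcal{A}(\bm{h},\bm{h}')$. These give the key dichotomy: either the multiset of shifts exhibits ``power alignment'' — grouping into classes that coincide modulo the full power $p^n$ — in which case an explicit main term is produced; or else $\mathcal{A}(\bm{h},\bm{h}')\ll p^{-\delta n}$ for some $\delta>0$ depending only on $\|\bm{m}\|_1+\|\bm{n}\|_1$.

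\smallskip

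\textbf{Step 3 (Identification of the main term).} For an aligned tuple, write the shifts as a multiset of distinct values $h\in\mathbb{Z}/p^n\mathbb{Z}$ with total multiplicity $\alpha(h)$. Lemma~\ref{kloost-eval} expresses $\Kl_{p^n}(a-h,b_0)$ as an explicit real oscillation whose phase $2((a-h)b_0)_{1/2}$ equidistributes modulo $p^n$ as $a$ varies over the fixed class $a_1\bmod p$ (via the differentiability property \eqref{sqroot-diff}). Together with Lemma~\ref{statphase-lemma} and the fact that $\Kl_{p^n}(a-h,b_0)$ vanishes unless $(a-h)b_0\in(\mathbb{Z}/p^n\mathbb{Z})^{\times 2}$, the average over $a$ factors into
\[
\prod_{h}\mu\big(x^{\alpha(h)}\big)\cdot\mathbf{1}_{(a_1-h)b_0\in(\mathbb{Z}/p\mathbb{Z})^{\times 2}},
\]
recognising the $\alpha(h)$-th moments of the law $\mu$ in \eqref{def-mu} at each surviving frequency.

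\smallskip

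\textbf{Step 4 (Matching and error control).} On the probabilistic side, expanding
\[
\mathbb{E}\Big(\prod_{i=1}^{k}\overline{\Kl(t_i;p;(a_1,b_0))}^{m_i}\Kl(t_i;p;(a_1,b_0))^{n_i}\Big)
\]
by independence of the i.i.d.\ variables $U_h^{\sharp}$ factors over distinct frequencies $h$ with $(a_1-h)b_0\in(\mathbb{Z}/p\mathbb{Z})^{\times 2}$, yielding the \emph{same} aligned-tuple sum with the same coefficient $C(\bm{h},\bm{h}';\bm{t})$ as in Step 3. The two main-term expansions therefore agree. The residual error is the aggregate of the power-savings $O(p^{-\delta n})$ from misaligned tuples, summed against $|C(\bm{h},\bm{h}';\bm{t})|$; the decay $|\alpha_h(t_i)|\ll\min(t_i,1/|h|)$ controls this summation and produces the claimed bound $O(p^{-\delta' n})$ for some $\delta'=\delta'(\|\bm{m}\|_1+\|\bm{n}\|_1)>0$.

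\smallskip

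\textbf{Main obstacle.} The hard part is Step 2: the number of tuples $(\bm{h},\bm{h}')$ is enormous ($\asymp p^{n\ell(\bm{m}+\bm{n})}$), and collusions among the shifts modulo intermediate powers $p^{\kappa}$ render the stationary-phase equation \eqref{statphase-cond} highly singular, potentially admitting many solutions. One must therefore obtain a power saving in $\mathcal{A}(\bm{h},\bm{h}')$ that is uniform across the full range of shift configurations, and correctly identify the main term in the ``aligned'' regime — this delicate analysis of singular $p$-adic critical points is precisely what Section~\ref{sums-of-products-sec} delivers.
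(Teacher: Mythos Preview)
Your outline follows the paper's approach closely---renormalize, complete, expand into sums of products, invoke Section~\ref{sums-of-products-sec}, and match with the random series---and the paper indeed deduces Proposition~\ref{complex-moments-prop} from exactly the pieces you name (Lemma~\ref{moments-approx-lemma}, Propositions~\ref{main-error-terms-proposition} and~\ref{error-term-estimate-proposition}). However, there is a genuine gap in your Step~2/Step~4.

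The dichotomy you state---``either the shifts coincide modulo the full power $p^n$ (main term) or $\mathcal{A}(\bm{h},\bm{h}')\ll p^{-\delta n}$''---is not what Theorems~\ref{kloosterman-moments} and~\ref{sums-of-products-theorem} deliver. Theorem~\ref{sums-of-products-theorem} says: either there is power saving, \emph{or} two distinct shifts $\tau\neq\tau'$ already collude modulo $p^{\lfloor\delta_2 n\rfloor}$. In the latter ``near-diagonal'' regime (shifts congruent modulo a sizable power of $p$ but \emph{not} equal modulo $p^n$) the theorem gives no bound at all on $\mathcal{A}$, and the trivial bound $|\mathcal{A}|\leqslant 2^{\ell(\bm{m}+\bm{n})}$ is all you have. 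Your Step~4 then sums $p^{-\delta n}$ against the coefficient weights, which is fine for the genuinely off-diagonal tuples but simply wrong for the near-diagonal ones.

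The paper handles this missing case separately in the proof of Proposition~\ref{error-term-estimate-proposition}: it splits $\mathrm{Err}_{p^n}=\mathrm{Err}^{\circ}+\mathrm{Err}^{\mathrm{sing}}$ according to whether $\Delta(\bm{h})=\min_{(i,j)\neq(i',j')}\|h_{i,j}-h_{i',j'}\|_p$ exceeds $p^{-\delta_2 n}$ or not. For $\mathrm{Err}^{\mathrm{sing}}$ one uses only the trivial bound on $\mathcal{A}$ but exploits that tuples with $h'\neq h$, $h'\equiv h\pmod{p^r}$ are \emph{rare} when weighted by $1/(|h'|+1)$: the sum over such $h'$ is $\ll(\log p^n)/p^r$. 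This elementary counting recovers a power saving for the near-diagonal contribution. You should insert this argument; without it the error estimate in Step~4 does not close. (A minor point: Lemma~\ref{completion-lemma} is stated for the renormalized path $\widetilde{\Kl_{p^n}}$, so strictly you must first pass through Lemma~\ref{moments-approx-lemma}, as the paper does.)
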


Proposition~\ref{complex-moments-prop} follows immediately from Lemma~\ref{moments-approx-lemma} and Propositions~\ref{main-error-terms-proposition} and \ref{error-term-estimate-proposition} below. Its immediate consequence is the following statement.

\begin{corollary}
\label{finite-distributions-corollary}
The sequence of $C^0([0,1],\mathbb C)$-valued random variables $\Kl_{p^n}(t;(a_1,b_0))$ on $(\mathbb Z/p^n\mathbb Z)^{\times}_{a_1}$ defined in \eqref{kl-pn-a1-def} converges in the sense of finite distributions to the $C^0([0,1],\mathbb C)$- valued random variable $\Kl(t;p;(a_1,b_0))$ as $n\to\infty$.
\end{corollary}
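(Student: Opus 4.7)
The plan is to deduce the corollary from Proposition~\ref{complex-moments-prop} by the classical method of moments. Fix $k\geqslant 1$ and a tuple $0\leqslant t_1<\dots<t_k\leqslant 1$, and denote by $X_n=(\Kl_{p^n}(t_i;(a_1,b_0)))_{i=1}^k$ and $X=(\Kl(t_i;p;(a_1,b_0)))_{i=1}^k$ the $\mathbb{C}^k$-valued random vectors in question, viewed also as $\mathbb{R}^{2k}$-valued via real and imaginary parts. Since every polynomial in the $2k$ real coordinates is a complex linear combination of the mixed monomials $\prod_i\bar z_i^{m_i}z_i^{n_i}$, Proposition~\ref{complex-moments-prop} is formally equivalent to the statement that every real polynomial moment of $(\Re X_n,\Im X_n)$ converges to the corresponding moment of $(\Re X,\Im X)$.

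To upgrade this to convergence in law of $X_n$ to $X$ I would invoke the Cram\'er--Wold device: it suffices to check that for every real linear functional $\ell:\mathbb{R}^{2k}\to\mathbb{R}$, the real scalar random variable $Y_n=\ell(\Re X_n,\Im X_n)$ converges in law to $Y=\ell(\Re X,\Im X)$. The moments of $Y_n$ are themselves polynomial combinations of the joint complex moments of $X_n$, hence converge to those of $Y$ by Proposition~\ref{complex-moments-prop}; so the classical Fr\'echet--Shohat theorem will give $Y_n\to Y$ in law provided only that the limit $Y$ is moment-determined.

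The only non-formal point, and the main potential obstacle, is thus the moment-determinacy of $Y$. Each factor $\Kl(t_i;p;(a_1,b_0))$ in \eqref{our-random-series} is a random Fourier series with $\ell^2$-summable deterministic coefficients $(e(ht_i)-1)/(2\pi ih)$ and iid randomizers $U^\sharp_h$ of law $\mu$ supported in $[-2,2]$ with mean zero; any real linear functional of $X$ is itself such a series. A standard Azuma--Hoeffding inequality for weighted sums of independent bounded random variables then gives uniform sub-Gaussian tails of the form $\mathbb{P}(|Y|>\lambda)\ll e^{-c\lambda^2}$ for some $c>0$, so in particular the moment generating function of $Y$ is analytic in a neighborhood of the origin, and $Y$ satisfies Carleman's condition comfortably.

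Combining these ingredients yields $X_n\to X$ in law in $\mathbb{C}^k$ for every fixed $k$ and every tuple $\bm{t}$, which by the definition recalled in \S\ref{prob-sec} is precisely convergence of $\Kl_{p^n}(\cdot;(a_1,b_0))$ to $\Kl(\cdot;p;(a_1,b_0))$ in the sense of finite distributions. All genuine arithmetic work is encapsulated in Proposition~\ref{complex-moments-prop}, which in turn rests on the sums-of-products estimates of Section~\ref{sums-of-products-sec}; the passage from moments to distribution outlined here is otherwise standard probabilistic packaging, the only non-routine check being the boundedness-driven sub-Gaussianity that forces moment-determinacy of the limit.
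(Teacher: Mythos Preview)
Your proposal is correct and follows essentially the same route the paper leaves implicit: the paper states the corollary as an ``immediate consequence'' of Proposition~\ref{complex-moments-prop} via the method of moments, with moment-determinacy of the limit guaranteed by Proposition~\ref{random-series-properties}(3) (finiteness of the Laplace transform, which is exactly the sub-Gaussian bound you derive from Hoeffding). Your write-up simply spells out the Cram\'er--Wold reduction and the Carleman/Fr\'echet--Shohat step that the paper takes for granted.
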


\subsection{The limit random variable}
Recall the absolutely continuous Borel probability distribution $\mu$ given in \eqref{def-mu}. Its moments are given for $m\in\mathbb{Z}_{\geqslant 0}$ by
\begin{equation}
\label{moments-of-mu}
\int_{\mathbb{R}}x^m\,\dd\mu(x)=\frac1{\pi}\int_{-2}^2\frac{x^m}{\sqrt{4-x^2}}\,\dd x=\delta_{2\mid m}\binom{m}{m/2},
\end{equation}
as is easily seen by setting $x=2\cos\theta$. Here, $\delta_{2\mid m}$ is the Kronecker delta and we formally set $\binom 00=1$. Thus for any finite sequence $(U^{\sharp}_i)_{i=1}^r$ of independent random variables identically distributed with probability law $\mu$, and every sequence $(m_i)_{i=1}^r$ of non-negative integers,
\begin{equation}
\label{expectation-of-products}
\mathbb{E}\bigg(\prod_{i=1}^rU^{\sharp m_i}_i\bigg)=\prod_{i=1}^r\delta_{2\mid m_i}\binom{m_i}{m_i/2}.
\end{equation}

For fixed $a_1,b_0\in(\mathbb{Z}/p\mathbb{Z})^{\times}$, let $(U^{\sharp}_h)_{h\in\mathbb Z,(h-a_1)b_0 \in (\mathbb Z/p\mathbb Z)^{\times 2}}$ be a sequence of independent identically distributed random variables of probability law $\mu$. For $H>0$, consider the new random variable
\begin{equation}
\label{KlpH-truncated}
\Kl_H(t;p;(a_1,b_0))=\sum_{\substack{|h|\leqslant H\\ (a_1-h)b_0 \in (\mathbb Z/p\mathbb Z)^{\times 2}}}\frac{e(ht)-1}{2\pi ih}U^{\sharp}_h \qquad(t\in [0,1]).
\end{equation}
When $\Kl_{H}(t;p;(a_1,b_0))$ converges as $H\!\to\infty$, we write the limit as the series $\Kl(t;p;(a_1,b_0))$ shown in \eqref{our-random-series}. The next proposition, which is an elaboration of Proposition~\ref{properties-prop}, shows that this indeed defines a random series a.e.

\begin{proposition}
\label{random-series-properties}
Let $p$ be an odd prime, and let $a_1,b_0\in(\mathbb{Z}/p\mathbb{Z})^{\times}$.
\begin{enumerate}
\item For every $t\in [0,1]$, the random series $\Kl(t;p;(a_1,b_0))$ converges almost surely, hence in law.
\item\label{rate-of-convergence-item}
For every $t\in [0,1]$, $H\geqslant 2$, and $k\geqslant 0$, we have uniformly in $p$
\begin{align*}
\| \Kl_{H}(t;p;(a_1,b_0))\|_\infty&\ll \log H,\\
\mathbb{E}\big(|\Kl(t;p;(a_1,b_0))-\Kl_{H}(t;p;(a_1,b_0))|^k\big)&\ll_k H^{-k/2}.
\end{align*}
\item For every $t\in [0,1]$, the Laplace transform $\mathbb{E}(e^{\lambda\mathrm{Re}\,\Kl(t;p;(a_1,b_0))+\nu\mathrm{Im}\,\Kl(t;p;(a_1,b_0))})$ is well-defined for all $\lambda,\nu\in\mathbb{Z}_{\geqslant 0}$. In particular, $\Kl(t;p;(a_1,b_0))$ has moments of all orders, and
\[ \mathbb E(\Kl(t;p;(a_1,b_0))=0, \quad \mathbb{V}(\Kl(t;p;(a_1,b_0)))\leqslant t. \]
\item The random series $\Kl(t;p;(a_1,b_0))$ is almost surely a continuous, nowhere differentiable function on $[0,1]$.
\end{enumerate}
\end{proposition}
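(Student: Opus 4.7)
The plan is to leverage the unifying observation that $a_h(t):=(e(ht)-1)/(2\pi ih)$ for $h\neq 0$, together with $a_0(t):=t$, are the Fourier coefficients on $[0,1]$ of the indicator $\mathbf{1}_{[0,t]}$. Parseval then yields $\sum_{h\in\mathbb{Z}}|a_h(t)|^2 = t$ and, similarly, $\sum_{h\in\mathbb{Z}}|a_h(s)-a_h(t)|^2=|s-t|$. Combined with the fact that the $U^\sharp_h$ are i.i.d., symmetric (as $\mu$ is even), centered, and uniformly bounded by $2$ (as $\mu$ is supported on $[-2,2]$), these two identities will drive all four claims; the restriction of the index set to a subset $S\subseteq\mathbb{Z}$ only strengthens the relevant $\ell^2$ bounds.

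For (1), I would apply Kolmogorov's one-series theorem to the independent centered $L^2$ summands $a_h(t)U^\sharp_h$: the sum of their variances is $\mathbb{V}(U^\sharp_h)\cdot\sum_{h\in S}|a_h(t)|^2 \leqslant 2t<\infty$, giving almost sure convergence and hence convergence in law. Part (2) is then a direct computation: the sup-norm bound follows from $|U^\sharp_h|\leqslant 2$ a.s.\ and $\sum_{0<|h|\leqslant H}|h|^{-1}\ll\log H$, while the $L^k$ tail bound follows from the Khintchine--Kahane inequality for sums of independent centered bounded random variables, giving $\mathbb{E}\bigl|\sum_{|h|>H}a_h(t)U^\sharp_h\bigr|^k \ll_k \bigl(\sum_{|h|>H}|a_h(t)|^2\bigr)^{k/2}\ll_k H^{-k/2}$.

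For (3), I would apply Hoeffding's lemma: every centered random variable bounded by $2$ satisfies $\mathbb{E}(e^{cU^\sharp_h})\leqslant e^{2c^2}$, so
\[
\mathbb{E}\bigl(e^{\lambda\mathrm{Re}\,\Kl_H(t;p;(a_1,b_0))+\nu\mathrm{Im}\,\Kl_H(t;p;(a_1,b_0))}\bigr)
=\prod_{\substack{h\in S\\ |h|\leqslant H}}\mathbb{E}\bigl(e^{(\lambda\mathrm{Re}\,a_h(t)+\nu\mathrm{Im}\,a_h(t))U^\sharp_h}\bigr)
\leqslant \exp\bigl(2(\lambda^2+\nu^2)t\bigr),
\]
uniformly in $H$; Fatou's lemma then passes this bound to the limit $\Kl$. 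Existence of all moments follows by standard differentiation-under-the-expectation, and the identity $\mathbb{E}(\Kl(t;p;(a_1,b_0)))=0$ together with the variance bound arise from a direct term-by-term computation using independence, Parseval, and $\int x\,\dd\mu(x)=0$.

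For (4), the key input for continuity is the moment estimate $\mathbb{E}|\Kl(s;p;(a_1,b_0))-\Kl(t;p;(a_1,b_0))|^k\ll_k|s-t|^{k/2}$, which I would derive exactly as in (2) from Khintchine--Kahane and the second Parseval identity. Kolmogorov--Chentsov then provides a H\"older-$(1/2-\varepsilon)$ continuous modification of the process, and the It\^o--Nisio theorem (available since the summands are symmetric in $C^0([0,1],\mathbb{C})$) upgrades pointwise almost sure convergence of the partial sums to almost sure convergence in $C^0([0,1],\mathbb{C})$, so the series itself is almost surely continuous. The hardest step will be nowhere differentiability: the scaling $\mathbb{E}|\Kl(s;p;(a_1,b_0))-\Kl(t;p;(a_1,b_0))|^2\asymp|s-t|$ is Brownian-like, so I would adapt the classical random Fourier series argument (cf.\ Kahane, \emph{Some Random Series of Functions}, Ch.~7), combining a Paley--Zygmund type second-moment lower bound on increments with a Borel--Cantelli exponential tail estimate along dyadically shrinking scales, to exclude almost surely any finite difference quotient at any point of $[0,1]$.
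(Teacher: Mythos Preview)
Your proposal is correct and follows essentially the same route the paper indicates: the paper omits the proof entirely, deferring to \cite[Proposition~2.1]{KowalskiSawin2016}, and only records that everything rests on the coefficient bounds $|a_h(t)|\ll 1/|h|$, the subgaussianity of the tail $\Kl-\Kl_H$ with parameter $\sigma_H^2\ll\sum_{|h|>H}|h|^{-2}\ll H^{-1}$, and the Plancherel identity for the variance---precisely the ingredients you invoke. Your use of Khintchine--Kahane for the $L^k$ tail in (2) is a minor variant of the paper's subgaussian moment bound (and in fact you recover subgaussianity yourself via Hoeffding in (3)); otherwise the arguments coincide.

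One small point worth tightening in (4): for nowhere differentiability you claim the two-sided estimate $\mathbb{E}|\Kl(s)-\Kl(t)|^2\asymp|s-t|$, but Parseval only gives the upper bound directly since the sum is restricted to the index set $S=\{h:(a_1-h)b_0\in(\mathbb{Z}/p\mathbb{Z})^{\times 2}\}$. The lower bound needs the observation that $S$ is a union of $\tfrac{p-1}{2}$ residue classes modulo $p$ and hence has positive density in $\mathbb{Z}$, so that $\sum_{h\in S}|a_h(s)-a_h(t)|^2\gg_p|s-t|$; this is routine but should be stated, since the Paley--Zygmund step genuinely requires it.
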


Proposition~\ref{random-series-properties} is proved in the same way as \cite[Proposition 2.1]{KowalskiSawin2016}, so as in \cite{RicottaRoyer2018} we omit the proof. We contend ourselves with noting that these properties all rest on upper bounds on the coefficients in \eqref{KlpH-truncated}, which are easily uniform in $p$, and that the second estimate in item \eqref{rate-of-convergence-item} follows from the fact that $\Kl(t;p;(a_1,b_0))-\Kl_{H}(t;p;(a_1,b_0))$ is $\sigma_H^2$-sub\-gaussian with $\sigma_H\ll (\sum_{|h|>H}|1/(2\pi i h)|^2)^{1/2}\ll H^{-1/2}$. The variance $\mathbb{V}(\Kl(t;p;(a_1,b_0)))$ may be explicated using the Plancherel formula, though this is not needed for our purposes.

\subsection{Renormalization of Kloosterman paths and completion}
In this section, we define a slight renormalization of Kloosterman paths $\widetilde{\Kl_{p^n}}(t;(a,b_0)):[0,1]\to\mathbb{C}$, which is better suited to the completion techniques. As the two main lemmata of this section, we express $\widetilde{\Kl_{p^n}}(t;(a,b_0))$ in terms of complete Kloosterman sums \eqref{kloost-def}, and we show that the complex moments of $\widetilde{\Kl_{p^n}}(t;(a,b_0))$ are very close to those of the original paths $\Kl_{p^n}(t;(a,b_0))$.

Specifically, define $\widetilde{\text{Kl}_{p^n}}(t;(a,b_0)):[0,1]\to \mathbb C$ as follows: for any $k\in\{1,2,...,p^{n-1}\}$,
\begin{equation}
\label{modified-path}
\forall t\in\Big(\frac{k-1}{p^{n-1}},\frac{k}{p^{n-1}}\Big],\quad \widetilde{\Kl_{p^n}}(t;(a,b_0))=\frac{1}{p^{n/2}}\mathop{\sum\nolimits^\times}_{1\leqslant x\leqslant x_k(t)} e_{p^n}(ax+b_0\overline{x}),
\end{equation}
where
\[ x_k(t)=\varphi(p^{n})t+k-1. \]
The corresponding complex moments are
\begin{equation}
\label{complex-moment-renorm-def}
\widetilde{\mathcal{M}_{p^n}}(\bm{t};\bm{m},\bm{n};b_0)=\frac{1}{p^{n-1}}\sum_{\substack{a\in (\mathbb Z/p^n\mathbb Z)^{\times}\\ a\equiv a_1 \bmod p}}\prod_{i=1}^k \overline{\widetilde{\Kl_{p^n}}(t_i;(a,b_0))}^{m_i}\widetilde{\Kl_{p^n}}(t_i;(a,b_0))^{n_i}.
\end{equation}
Then the two main results about the paths $\widetilde{\Kl}(t;(a,b_0))$ are as follows.

\begin{lemma}[Completion]
\label{completion-lemma}
The modified Kloosterman paths $\widetilde{\Kl}_{p^n}(t;(a,b_0))$ defined in \eqref{modified-path} satisfy
\begin{equation}
\label{completion-eq1}
\widetilde{\Kl_{p^n}}(t;(a,b_0))=\frac{1}{p^{n/2}}\mathop{\sum\nolimits}_{\substack{h\bmod{p^n}\\ (a-h)b_0\in (\mathbb Z/p\mathbb Z)^{\times 2}}} \alpha_{p^n}(h;t)\Kl_{p^n}(a-h;b_0),
\end{equation}
with certain coefficients $\alpha_{p^n}(h;t)$, which are shown in \eqref{alphas-def} and satisfy, for $h\in\mathbb{Z}$ with $|h|<\frac12p^n$,
\begin{equation}
\label{completion-eq2}
\begin{aligned}
&\frac{1}{p^{n/2}}\alpha_{p^n}(h;t)\leqslant\min\Big(1,\frac1{2|h|}\Big);\\
&\frac{1}{p^{n/2}}\alpha_{p^n}(h;t)=\beta(h;t)+\mathrm{O}\Big(\frac{1}{p^n}\Big),\quad
\beta(h;t)=\begin{cases} t, & \text{if }h=0,\\ \frac{e(ht)-1}{2\pi ih} &\text{otherwise}.\end{cases}
\end{aligned}
\end{equation}
\end{lemma}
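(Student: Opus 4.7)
The plan is to apply the standard completion technique on $\mathbb{Z}/p^n\mathbb{Z}$: expand the indicator function of the interval $[1,x_k(t)]$ in additive characters, substitute into \eqref{modified-path}, and recognize the resulting inner $x$-sum as a complete Kloosterman sum. Explicitly, using
\[ \mathbf{1}_{[1,x_k(t)]}(x) = \frac{1}{p^n}\sum_{h\bmod p^n}\Bigl(\sum_{y=1}^{x_k(t)}e_{p^n}(-hy)\Bigr)e_{p^n}(hx), \]
interchanging the orders of summation, and observing that $\sum_{x\bmod p^n}^{\times} e_{p^n}((a+h)x + b_0\bar x) = p^{n/2}\Kl_{p^n}(a+h,b_0)$, I arrive, after the substitution $h\mapsto -h$, at \eqref{completion-eq1} with
\[ \alpha_{p^n}(h;t) = \frac{1}{p^{n/2}}\sum_{y=1}^{x_k(t)}e_{p^n}(hy). \]
The restriction of the sum to those $h$ for which $(a-h)b_0\in(\mathbb Z/p\mathbb Z)^{\times 2}$ then comes for free, since $\Kl_{p^n}(c,b_0)=0$ unless $cb_0\in(\mathbb Z/p^n\mathbb Z)^{\times 2}$, which for odd $p$ is equivalent to $cb_0\in(\mathbb Z/p\mathbb Z)^{\times 2}$.

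For the first bound in \eqref{completion-eq2}, the trivial bound on the geometric sum gives $|\alpha_{p^n}(h;t)|/p^{n/2}\leqslant x_k(t)/p^n\leqslant 1$, while for $h\neq 0$ summing the geometric series and using $|e_{p^n}(h)-1| = 2|\sin(\pi h/p^n)|\geqslant 4|h|/p^n$ for $|h|\leqslant p^n/2$ yields $|\alpha_{p^n}(h;t)|/p^{n/2}\leqslant 1/(2|h|)$.

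The asymptotic rests on the elementary uniform identity $x_k(t)/p^n = t + \mathrm{O}(1/p^n)$. This I would verify by noting that, since $\varphi(p^n)/p^n = 1-1/p$, we have $x_k(t)/p^n = t(1-1/p) + (k-1)/p^n$, whence $t - x_k(t)/p^n = (tp^{n-1}-(k-1))/p^n \in (0,1/p^n]$ on $((k-1)/p^{n-1},k/p^{n-1}]$. The case $h=0$ then gives $\alpha_{p^n}(0;t)/p^{n/2} = x_k(t)/p^n = t + \mathrm{O}(1/p^n)$ directly. For $0<|h|<p^n/2$, evaluating the geometric sum and Taylor expanding $1-e_{p^n}(-h) = (2\pi i h/p^n)\bigl(1+\mathrm{O}(h/p^n)\bigr)$ together with $e_{p^n}(hx_k(t))-1 = (e(ht)-1) + \mathrm{O}(h/p^n)$ produces
\[ \frac{\alpha_{p^n}(h;t)}{p^{n/2}} = \frac{e(ht)-1}{2\pi i h}\bigl(1+\mathrm{O}(h/p^n)\bigr) + \mathrm{O}(1/p^n), \]
and the multiplicative error is absorbed into the additive $\mathrm{O}(1/p^n)$ via $|\beta(h;t)|\leqslant 1/(\pi|h|)$. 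The whole argument is elementary; there is no real obstacle, the only point deserving care being the uniform alignment $x_k(t)/p^n = t + \mathrm{O}(1/p^n)$, which is essentially engineered into the definition of $x_k(t)$ in \eqref{modified-path}.
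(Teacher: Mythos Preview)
Your proof is correct and follows essentially the same approach as the paper: classical completion (discrete Fourier expansion of the interval indicator), identification of the inner $x$-sum as a complete Kloosterman sum, the vanishing $\Kl_{p^n}(a-h,b_0)=0$ for $(a-h)b_0\notin(\mathbb{Z}/p\mathbb{Z})^{\times 2}$ to impose the restriction, and then evaluation/estimation of the geometric sum in $\alpha_{p^n}(h;t)$. The paper's own argument is very terse (it simply invokes the completion method and says ``executing the geometric sum''), so your write-up is effectively a fleshed-out version of the same proof; the only cosmetic point is that the sum defining $\alpha_{p^n}(h;t)$ runs over integers $y\leqslant x_k(t)$, so strictly speaking $\lfloor x_k(t)\rfloor$ appears, but this is absorbed into the same $\mathrm{O}(1/p^n)$ you already track.
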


\begin{lemma}[Approximation of complex moments]
\label{moments-approx-lemma}
The complex moments defined in \eqref{complex-moment-def} and \eqref{complex-moment-renorm-def} satisfy
\[ \mathcal{M}_{p^n}(\bm{t};\bm{m},\bm{n};b_0)=\widetilde{\mathcal{M}_{p^n}}(\bm{t};\bm{m},\bm{n};b_0)+\mathrm{O}_{\ell(\bm{m}+\bm{n})}\big(p^{-n/2}\log^{\ell(\bm{m}+\bm{n})}(p^n)\big). \]
\end{lemma}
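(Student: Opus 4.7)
The plan is to exploit the fact that $\Kl_{p^n}(t;(a,b_0))$ and $\widetilde{\Kl_{p^n}}(t;(a,b_0))$ are two slightly different parameterizations of essentially the same sequence of partial Kloosterman sums, combine this with a uniform logarithmic size bound on both paths, and conclude via a standard telescoping expansion of the product difference. First, I would establish the pointwise comparison
\[
\sup_{t\in[0,1]}\bigl|\Kl_{p^n}(t;(a,b_0))-\widetilde{\Kl_{p^n}}(t;(a,b_0))\bigr|\ll p^{-n/2},
\]
uniformly in $a\in(\mathbb{Z}/p^n\mathbb{Z})^{\times}$. Indeed, a direct count shows that as $t$ varies over each modified-path interval $\bigl(\tfrac{k-1}{p^{n-1}},\tfrac{k}{p^{n-1}}\bigr]$, both paths visit the same block of $p-1$ partial sums $\Kl_{j_i;p^n}(a,b_0)$ with $(k-1)(p-1)<i\leqslant k(p-1)$, and at any such $t$ each of the two paths is within one summand of magnitude $\leqslant p^{-n/2}$ of one of these partial sums; since the extreme partial sums in this block themselves differ by at most $(p-2)\cdot p^{-n/2}=O(p^{-n/2})$, the pointwise difference is $O(p^{-n/2})$.

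Next, I would establish the uniform size bound
\[
\sup_{t\in[0,1]}\bigl|\widetilde{\Kl_{p^n}}(t;(a,b_0))\bigr|,\ \sup_{t\in[0,1]}\bigl|\Kl_{p^n}(t;(a,b_0))\bigr|\ll\log(p^n),
\]
uniformly in $a$. For the modified path, this follows at once by applying the completion formula of Lemma~\ref{completion-lemma} together with Weil's bound $|\Kl_{p^n}(a-h,b_0)|\leqslant 2$:
\[
\bigl|\widetilde{\Kl_{p^n}}(t;(a,b_0))\bigr|\leqslant\sum_{h\bmod p^n}\frac{|\alpha_{p^n}(h;t)|}{p^{n/2}}\bigl|\Kl_{p^n}(a-h,b_0)\bigr|\ll\sum_{|h|<p^n/2}\min\Bigl(1,\tfrac{1}{2|h|}\Bigr)\ll\log(p^n),
\]
and the analogous bound for $\Kl_{p^n}(t;(a,b_0))$ then follows from the pointwise comparison by the triangle inequality.

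With both ingredients in hand, the final step is a telescoping expansion. Writing $X_i=\Kl_{p^n}(t_i;(a,b_0))$ and $Y_i=\widetilde{\Kl_{p^n}}(t_i;(a,b_0))$, one expresses the difference $\prod_i\bar X_i^{m_i}X_i^{n_i}-\prod_i\bar Y_i^{m_i}Y_i^{n_i}$ as a sum of $\ell(\bm m+\bm n)$ terms, each containing exactly one factor of the form $X_i-Y_i$ or $\bar X_i-\bar Y_i$ (bounded by $O(p^{-n/2})$) and $\ell(\bm m+\bm n)-1$ remaining factors (each bounded by $O(\log(p^n))$). The resulting pointwise bound of $O_{\ell(\bm m+\bm n)}\bigl(p^{-n/2}\log^{\ell(\bm m+\bm n)-1}(p^n)\bigr)$ is uniform in $a$ and thus survives averaging over $a\in(\mathbb{Z}/p^n\mathbb{Z})^{\times}_{a_1}$, yielding the claimed estimate.

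The main obstacle, I expect, will be the bookkeeping in the pointwise comparison: carefully aligning the two differently spaced parameterizations (one piecewise linear on $\varphi(p^n)-1$ intervals, the other essentially a step function on $p^{n-1}$ intervals) and confirming that they track the same block of $p-1$ partial sums on each sub-interval. The subsequent completion-based size bound and the telescoping product step are routine.
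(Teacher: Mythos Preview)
Your proposal is correct and follows essentially the same approach as the paper: the pointwise comparison $|\Kl_{p^n}(t;(a,b_0))-\widetilde{\Kl_{p^n}}(t;(a,b_0))|\ll p/p^{n/2}$ obtained by counting the bounded number of summands on which the two parameterizations differ, the logarithmic size bound $|\widetilde{\Kl_{p^n}}(t;(a,b_0))|\ll\log(p^n)$ via the completion formula and the Weil/Kloosterman bound, and then a telescoping expansion of the product difference. Your telescoping even yields one fewer log factor than the stated bound, which is harmless.
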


The lemmas are mostly analogous to \cite[Lemmata 4.2 and 4.4]{RicottaRoyer2018}, so we will be brief, emphasizing only the new aspects. The classical completion method (essentially the Plancherel identity for the discrete Fourier transform) yields \eqref{completion-eq1} with coefficients $\alpha_{p^n}(h;t)$ defined for $h\in \mathbb Z/p^n\mathbb Z$ and $1\leqslant k\leqslant p^{n-1}$ as
\begin{equation}
\label{alphas-def}
\forall t\in\Big(\frac{k-1}{p^{n-1}},\frac{k}{p^{n-1}}\Big], \quad \alpha_{p^n}(h;t)=\frac{1}{p^{n/2}}\sum_{1\leqslant x\leqslant x_k(t)}e_{p^n}(hx),
\end{equation}
and keeping in mind that $\Kl_{p^n}(a-h,b_0)=0$ if $(a-h)b_0\not\in(\mathbb{Z}/p\mathbb{Z})^{\times 2}$ by Lemma~\ref{kloost-eval}. Executing the geometric sum in $\alpha_{p^n}(h;t)$ we obtain \eqref{completion-eq2} for $|h|<\frac12p^n$, a condition we remark should also be required in \cite[Eq.~(4.7)]{RicottaRoyer2018}. This proves Lemma~\ref{completion-lemma}.

The estimate \eqref{completion-eq2} implies that
 \[ \sum_{h\bmod p^n}|\alpha_{p^n}(h,t)|\ll p^{n/2}\log(p^n). \]
 Coupling this with the bound $|\Kl_{p^n}(a-h;b_0)|\leqslant 2$ from Lemma~\ref{kloost-eval}, we conclude from \eqref{completion-eq1} that
\[ |\widetilde{\Kl_{p^n}}(t;(a,b_0))|\ll \log(p^n). \]
On the other hand, we have that
\begin{equation}
\label{tilde-to-no-tilde}
\big|\text{Kl}_{p^n}(t;(a,b_0))-\widetilde{\text{Kl}_{p^n}}(t;(a,b_0))\big|\leqslant \frac{p}{p^{n/2}}
\end{equation}
by simply counting the number of terms different between the two sums and estimating them trivially. Lemma~\ref{moments-approx-lemma} follows using this by subtracting the corresponding terms in \eqref{complex-moment-def} and \eqref{complex-moment-renorm-def}.

Starting from the definition \eqref{complex-moment-renorm-def} of the complex moments of $\widetilde{\Kl_{p^n}}(t_i;(a,b_0))$, inserting the completion expansion \eqref{completion-eq1}, and expanding, we find that
\begin{equation}
\label{complex-moment-with-sums-of-products}
\begin{aligned}
\widetilde{\mathcal{M}_{p^n}}(\bm{t};\bm{m},\bm{n};b_0)
=\frac{1}{p^{n\ell(\bm{m}+\bm{n})/2}}&\sum_{\bm{h}\in H^{\ell(\bm{m}+\bm{n})}_{a_1(p^n)}}\prod_{i=1}^k\prod_{j=1}^{m_i}\overline{\alpha_{p^n}(h_{i,j};t_i)}\prod_{j=m_i+1}^{m_i+n_i}\alpha_{p^n}(h_{i,j};t_i)\\
&\qquad\times\frac{1}{p^{n-1}}\sum_{\substack{a\in(\mathbb Z/p^n\mathbb Z)^\times\\a\equiv a_1\bmod p}}\prod_{i=1}^k\prod_{j=1}^{m_i+n_i}\Kl_{p^n}(a-h_{i,j},b_0),
\end{aligned}
\end{equation}
where
\[ H_{a_1(p^n)}=\big\{h\in\mathbb{Z}:|h|<\tfrac12p^n,\,\,(a_1-h)b_0\in(\mathbb{Z}/p\mathbb{Z})^{\times 2}\big\}, \]
$\bm{h}_i=(h_{i,1},\dots,h_{i,m_i},h_{i,m_i+1},\dots,h_{i,m_i+n_i})\in H_{a_1(p^n)}^{m_i+n_i}$, and $\bm{h}=(\bm{h}_1,\dots,\bm{h}_k)\in H^{\ell(\bm{m}+\bm{n})}_{a_1(p^n)}$.

Expansion \eqref{complex-moment-with-sums-of-products} makes clear the central importance of the sums of products of Kloosterman sums for the estimation of $\widetilde{M}_{p^n}(\bm{t};\bm{m};\bm{n};b_0)$. We devote the next subsection to these, and return to the analysis of $\widetilde{M}_{p^n}(\bm{t};\bm{m};\bm{n};b_0)$ in \S\ref{main-error-terms}.

\subsection{Sums of products of Kloosterman sums}
The inner sum in \eqref{complex-moment-with-sums-of-products} is a \emph{sum of products} of Kloosterman sums, or equivalently a moment of shifted Kloosterman sums, which for $\bm{\mu}:\mathbb{Z}/p^n\mathbb{Z}\to\mathbb{Z}_{\geqslant 0}$ we denote more generally by
\begin{equation}
\label{sum-of-products}
\mathcal{S}_{p^n}(\bm{\mu};a_1,b_0)=\frac{1}{p^{n-1}}\sum_{\substack{a\in (\mathbb Z/p^n \mathbb Z)^\times\\ a\equiv a_1\bmod p}} \prod_{\tau\in \mathbb Z/p^n\mathbb Z}\text{Kl}_{p^n}(a-\tau,b_0)^{\bm{\mu}(\tau)}.
\end{equation}
Denoting
\[ T(\bm{\mu})=\{\tau\in\mathbb{Z}/p^n\mathbb{Z}:\bm{\mu}(\tau)\geqslant 1\}, \]
and applying the explicit evaluation of Kloosterman sums in Lemma~\ref{kloost-eval}, we get that
\[ \mathcal{S}_{p^n}(\bm{\mu};a_1,b_0)=\frac1{p^{n-1}}\sum_{\substack{a\in(\mathbb{Z}/p^n\mathbb{Z})^{\times}\\a\equiv a_1\bmod p}}\prod_{\tau\in T(\bm{\mu})}\!\!\left(\!\left(\frac{((a-\tau)b_0)_{1/2}}{p^n}\right)\mathrm{Re}\left[\epsilon_{p^n}e\!\left(\frac{2((a-\tau)b_0)_{1/2}}{p^n}\right)\right]\right)^{\!\bm{\mu}(\tau)} \]
if $(a_1-\tau)b_0\in(\mathbb{Z}/p\mathbb{Z})^{\times 2}$ for every $\tau\in T(\bm{\mu})$, and $\mathcal{S}_{p^n}(\bm{\mu};a_1,b_0)=0$ otherwise. Expanding the real parts, we may write
\begin{align*}
\mathcal{S}_{p^n}(\bm{\mu};a_1,b_0)&=\frac1{2^{\|\bm{\mu}\|_1}}\frac1{p^{n-1}}\sum_{\substack{a\in(\mathbb{Z}/p^n\mathbb{Z})^{\times}\\a\equiv a_1\bmod p}}\prod_{\tau\in T(\bm{\mu})}\left(\frac{((a-\tau)b_0)_{1/2}}{p^n}\right)^{\bm{\mu}(\tau)}\\
&\qquad\times
\sum_{u_{\tau}=0}^{\bm{\mu}(\tau)}\binom{\bm{\mu}(\tau)}{u_{\tau}}\epsilon_{p^n}^{\bm{\mu}(\tau)-2u_{\tau}}e\left(\frac{2(\bm{\mu}(\tau)-2u_{\tau})((a-\tau)b_0)_{1/2}}{p^n}\right).
\end{align*}
Denoting, for every $T\subseteq\mathbb{Z}/p^n\mathbb{Z}$ such that $(a_1-\tau)b_0\in(\mathbb{Z}/p\mathbb{Z})^{\times 2}$ for every $\tau\in T$, and for every sequence $\bm{\epsilon}=(\epsilon_{\tau})_{\tau\in T}$,
\begin{equation}
\label{sum-of-exponentials}
f_{T,\bm{\epsilon}}(a)=\sum_{\tau\in T}\epsilon_{\tau}((a-\tau)b_0)_{1/2},\quad
\mathcal{S}^{T,\bm{\epsilon}}_{p^n}(a_1,b_0)=\frac1{p^{n-1}}\sum_{\substack{a\in(\mathbb{Z}/p^n\mathbb{Z})^{\times}\\a\equiv a_1\bmod p}}
e\left(\frac{f_{T,\epsilon}(a)}{p^n}\right),
\end{equation}
we may write
\begin{equation}
\label{spn-big-sum}
\mathcal{S}_{p^n}(\bm{\mu};a_1,b_0)=\sum_{\bm{u}\in U(\bm{\mu})} c(\bm{\mu},\bm{u};a_1,b_0) \mathcal{S}_{p^n}^{T(\bm{\mu}),\bm{\mu}-2\bm{u}}(a_1,b_0),
\end{equation}
where $U(\bm{\mu})=\prod_{\tau\in T(\bm{\mu})}[0,\bm{\mu}(\tau)]$ and, for $\bm{u}\in U(\mu)$,
\[ c(\bm{\mu},\bm{u};a_1,b_0)=\frac1{2^{\|\bm{\mu}\|_1}}\prod_{\tau\in T(\bm{\mu})}\left(\frac{((a_1-\tau)b_0)_{1/2}}{p^n}\right)^{\bm{\mu}(\tau)}\binom{\bm{\mu}(\tau)}{u_{\tau}}\epsilon_{p^n}^{\bm{\mu}(\tau)-2u_{\tau}}. \]

Estimation of the sums $\mathcal{S}_{p^n}^{T,\bm{\epsilon}}(a_1,b_0)$ shown in \eqref{sum-of-exponentials} is the main subject of section~\ref{sums-of-products-sec}. It will be seen there (Theorem~\ref{sums-of-products-theorem}) that, at least when there are no high collusions among the elements of the set of shifts $T$, these sums feature power cancellation unless $\bm{\epsilon}=\bm{0}$ (or equivalently if we can take $T=\emptyset$), which corresponds to the term with $\bm{\mu}=2\bm{u}$ in \eqref{spn-big-sum}, if $2\mid\bm{\mu}$ so that such a term is in fact present. Specifically, applying Theorem~\ref{sums-of-products-theorem} to \eqref{spn-big-sum} we obtain the following.

\begin{theorem}[Moments of shifted Kloosterman sums]
\label{kloosterman-moments}
For every $M>0$, there exist constants $\delta_i=\delta_i(M)$ ($i=1,2$) such that, for every $\bm{\mu}:\mathbb{Z}/p^n\mathbb{Z}\to\mathbb{Z}_{\geqslant 0}$ such that $\|\bm{\mu}\|_{\infty}\leqslant M$ and $\tau\neq\tau'\pmod{p^{\lfloor\delta_2n\rfloor}}$ for every $\tau\neq\tau'\in T(\bm{\mu})$, the normalized moment of shifted Kloosterman sums, given for $a_1,b_0\in(\mathbb{Z}/p\mathbb{Z})^{\times}$ by \eqref{sum-of-products}, satisfies
\[ \mathcal{S}_{p^n}(\bm{\mu};a_1,b_0)=\prod_{\bm{\tau}\in T(\mu)}\delta_{2\mid\bm{\mu}(\tau)}\frac1{2^{\bm{\mu}(\tau)}}\binom{\bm{\mu}(\tau)}{\bm{\mu}(\tau)/2}+\mathrm{O}_{\|\bm{\mu}\|_1}(p^{-\delta_1n}). \]
\end{theorem}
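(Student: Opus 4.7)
The plan is to work directly from the expansion \eqref{spn-big-sum} already derived in the excerpt,
\[
\mathcal{S}_{p^n}(\bm{\mu};a_1,b_0)=\sum_{\bm{u}\in U(\bm{\mu})} c(\bm{\mu},\bm{u};a_1,b_0)\,\mathcal{S}_{p^n}^{T(\bm{\mu}),\bm{\mu}-2\bm{u}}(a_1,b_0),
\]
and to split $U(\bm{\mu})$ into a \emph{diagonal} part (the $\bm{u}$ with $\bm{\mu}-2\bm{u}=\bm{0}$) and an \emph{off-diagonal} part. The diagonal will produce the asserted main term, while the off-diagonal part will be absorbed into the power-saving error via Theorem~\ref{sums-of-products-theorem}. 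It suffices to treat $\bm{\mu}$ with $(a_1-\tau)b_0\in(\mathbb{Z}/p\mathbb{Z})^{\times 2}$ for every $\tau\in T(\bm{\mu})$, since otherwise $\mathcal{S}_{p^n}(\bm{\mu};a_1,b_0)=0$ by the vanishing clause of Lemma~\ref{kloost-eval} and \eqref{spn-big-sum} does not apply; the squareness condition is automatic in the complex-moments application of Proposition~\ref{complex-moments-prop}, since the relevant $\tau$'s there range over $H_{a_1(p^n)}$ in \eqref{complex-moment-with-sums-of-products}.

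For the diagonal contribution, a diagonal $\bm{u}$ exists if and only if every $\bm{\mu}(\tau)$ is even, in which case it is uniquely given by $u_{\tau}=\bm{\mu}(\tau)/2$. The phase $f_{T(\bm{\mu}),\bm{0}}\equiv 0$ makes $\mathcal{S}_{p^n}^{T(\bm{\mu}),\bm{0}}(a_1,b_0)=1$ by the trivial count $\#\{a\in(\mathbb{Z}/p^n\mathbb{Z})^{\times}:a\equiv a_1\bmod p\}=p^{n-1}$, and the coefficient $c(\bm{\mu},\bm{\mu}/2;a_1,b_0)$ simplifies nicely: the Jacobi symbol factor $\bigl(\tfrac{((a_1-\tau)b_0)_{1/2}}{p^n}\bigr)^{\bm{\mu}(\tau)}$ equals $1$ (even exponent), the Gauss-root factor reads $\epsilon_{p^n}^{0}=1$, and the binomial is $\binom{\bm{\mu}(\tau)}{\bm{\mu}(\tau)/2}$. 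Combined with $2^{-\|\bm{\mu}\|_1}=\prod_{\tau}2^{-\bm{\mu}(\tau)}$, this reproduces exactly the asserted main term.

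For each off-diagonal index, the sign vector $\bm{\epsilon}=\bm{\mu}-2\bm{u}\neq\bm{0}$, and I would invoke Theorem~\ref{sums-of-products-theorem}, which provides a power saving $\mathcal{S}_{p^n}^{T,\bm{\epsilon}}(a_1,b_0)\ll p^{-\delta' n}$ for every nonzero $\bm{\epsilon}$ under a no-high-collusion hypothesis on $T$; the threshold $\delta_2=\delta_2(M)$ in the statement is chosen to match the one required there. Since $|c(\bm{\mu},\bm{u};a_1,b_0)|\leqslant 2^{-\|\bm{\mu}\|_1}\prod_{\tau}\binom{\bm{\mu}(\tau)}{u_{\tau}}$ and the number of off-diagonal indices is at most $|U(\bm{\mu})|\leqslant\prod_{\tau}(\bm{\mu}(\tau)+1)$, summing produces the claimed error $\mathrm{O}_{\|\bm{\mu}\|_1}(p^{-\delta_1 n})$ with $\delta_1=\delta'$.

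The honest obstacle is of course Theorem~\ref{sums-of-products-theorem} itself, which is the technical heart of Section~\ref{sums-of-products-sec} and which must handle arbitrary stationary points of the $p$-adic phase $f_{T,\bm{\epsilon}}$, possibly highly singular; the reduction above is essentially bookkeeping once that result is in hand. The subtle point to watch is that both $\delta_1$ and $\delta_2$ must depend only on $M=\|\bm{\mu}\|_{\infty}$ rather than on $|T(\bm{\mu})|$ or $\|\bm{\mu}\|_1$, with all combinatorial enumeration of $U(\bm{\mu})$ pushed into the $\|\bm{\mu}\|_1$-dependence of the implicit constant in the error term.
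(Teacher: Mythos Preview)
Your proposal is correct and is exactly the paper's approach: the paper derives Theorem~\ref{kloosterman-moments} by a one-line application of Theorem~\ref{sums-of-products-theorem} to the decomposition \eqref{spn-big-sum}, isolating the diagonal term $\bm{u}=\bm{\mu}/2$ (present iff $2\mid\bm{\mu}$) as the main term and bounding every other $\mathcal{S}_{p^n}^{T(\bm{\mu}),\bm{\mu}-2\bm{u}}(a_1,b_0)$ by the power-saving alternative of Theorem~\ref{sums-of-products-theorem}. Your caution about the $\delta_i$-dependence is well placed --- a direct application of Theorem~\ref{sums-of-products-theorem} gives $\delta_i$ depending on $\|\bm{\epsilon}\|_1\leqslant\|\bm{\mu}\|_1$ rather than on $\|\bm{\mu}\|_{\infty}$ alone --- but this is harmless, since every use of Theorem~\ref{kloosterman-moments} in the paper (in \S\ref{main-error-terms}--\ref{error-terms-sec}) occurs with $\|\bm{\mu}\|_1\leqslant\ell(\bm{m}+\bm{n})$ fixed.
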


\begin{remark}
Recalling \eqref{moments-of-mu}, from Theorem~\ref{kloosterman-moments} it follows in particular that, for $a_1b_0\in(\mathbb{Z}/p\mathbb{Z})^{\times 2}$, and for every $m\in\mathbb{Z}_{\geqslant 0}$, there exists a $\delta>0$ such that, for every $n\geqslant 2$,
\begin{equation}
\label{corr-one-sum}
\frac1{p^{n-1}}\sum_{\substack{a\in (\mathbb{Z}/p^n\mathbb{Z})^{\times}\\ a\equiv a_1\bmod p}}\Kl_{p^n}(a,b_0)^m=\mathbb{E}(U^{\sharp m})+\mathrm{O}_m(p^{-\delta n}),
\end{equation}
where $U^{\sharp}$ is a random variable distributed according to the probability law $\mu$ in \eqref{def-mu}.
In other words, the ensemble of normalized Kloosterman sums $\Kl_{p^n}(a,b_0)$ with $a\in(\mathbb{Z}/p^n\mathbb{Z})^{\times}_{a_1}$ is equidistributed in $[-2,2]$ with respect to $\mu$ as $n\to\infty$. This statement should be contrasted with the analogous statement \cite[Remark~4.11]{RicottaRoyer2018} for $n$ fixed and $p\to\infty$ with the measure $\mu_U$ shown in \eqref{KKl}; it also intuitively explains the shape of the limiting random series \eqref{our-random-series} as compared to \eqref{after-completion}. We refer to \cite[Remark~4.11]{RicottaRoyer2018} for other instances in which equidistribution with respect to $\mu$ and $\mu_U$ has been noted. In fact, in this case, the analogue of \eqref{corr-one-sum} holds even in substantially smaller orbits $a\equiv a_1\pmod{p^{\kappa}}$ with any $\kappa\leqslant n-2$, with a substantially stronger error term $\mathrm{O}_m(p^{-(n-\kappa)/2})$, because \eqref{corr-one-sum} encounters no singular stationary points in the situation of Theorem~\ref{sums-of-products-theorem}.

More generally, for any fixed finite set $T\subseteq\mathbb{Z}$ such that $(a_1-\tau)b_0)\in(\mathbb{Z}/p\mathbb{Z})^{\times 2}$ for every $\tau\in T$, there exists a $\delta=\delta(T)>0$ such that, for every tuple $\bm{m}=(m_{\tau})_{\tau\in T}\in\mathbb{Z}_{\geqslant 0}^T$ and every $n\geqslant n_0(T,\|\bm{m}\|_1)$,
\[ \frac1{p^{n-1}}\sum_{\substack{a\in (\mathbb{Z}/p^n\mathbb{Z})^{\times}\\ a\equiv a_1\bmod p}}\prod_{\tau\in T}\Kl_{p^n}(a-\tau,b_0)^{m_{\tau}}=\mathbb{E}\bigg(\prod_{\tau\in T}U_{\tau}^{\sharp m_{\tau}}\bigg)+\mathrm{O}_m(p^{-\delta n}), \]
for any sequence of independent random variables $U_{\tau}^{\sharp}$ identically distributed according to $\mu$; cf.~\eqref{expectation-of-products}. Thus the tuple $(\Kl_{p^n}(a-\tau,b_0))_{\tau\in T}$ is equidistributed in $[-2,2]^{|T|}$ with respect to $\otimes^{|T|}\mu$.
\end{remark}

\subsection{Isolation of the main and error terms}
\label{main-error-terms}

We now return to \eqref{complex-moment-with-sums-of-products} and denote, for $\bm{h}\in H_{a_1(p^n)}^{\ell(\bm{m}+\bm{n})}$, by $\bm{\mu}_{\bm{h}}:\mathbb{Z}/p^n\mathbb{Z}\to\mathbb{Z}_{\geqslant 0}$ the function defined by
$\mu_{\bm{h}}(\tau)=\{(i,j):1\leqslant i\leqslant k,\,1\leqslant j\leqslant m_i+n_i,\,h_{i,j}\equiv \tau \bmod p^n\}|$, so that the inner sum in \eqref{complex-moment-with-sums-of-products} is precisely $\mathcal{S}_{p^n}(\bm{\mu}_{\bm{h}};a_1,b_0)$.
Using the decomposition \eqref{spn-big-sum} and isolating the term with $\bm{\mu}_{\bm{h}}=2\bm{u}$ (if present) as in Theorem~\ref{kloosterman-moments}, we can write
\begin{equation}
\label{main-error-isolated}
\widetilde{\mathcal{M}_{p^n}}(\bm{t};\bm{m},\bm{n};b_0)=\widetilde{\mathcal{M}_{p^n}}{}'(\bm{t};\bm{m},\bm{n};b_0)
+\mathrm{Err}_{p^n}(\bm{t};\bm{m},\bm{n};b_0),
\end{equation}
where
\begin{align}
\nonumber
\widetilde{\mathcal{M}_{p^n}}{}'(\bm{t};\bm{m},\bm{n};b_0)
&=\frac{1}{p^{n\ell(\bm{m}+\bm{n})/2}}\sum_{\bm{h}\in H^{\ell(\bm{m}+\bm{n})}_{a_1(p^n)}}\prod_{i=1}^k\prod_{j=1}^{m_i}\overline{\alpha_{p^n}(h_{i,j};t_i)}\prod_{j=m_i+1}^{m_i+n_i}\alpha_{p^n}(h_{i,j};t_i)\\
\nonumber
&\qquad\times\prod_{\tau\in T(\bm{\mu}_{\bm{h}})}\delta_{2\mid\bm{\mu}_{\bm{h}}(\tau)}\frac1{2^{\bm{\mu}_{\bm{h}}(\tau)}}\binom{\bm{\mu}_{\bm{h}}(\tau)}{\bm{\mu}_{\bm{h}}(\tau)/2},\\
\mathrm{Err}_{p^n}(\bm{t};\bm{m},\bm{n};b_0)
\label{error-term}
&=\frac{1}{p^{n\ell(\bm{m}+\bm{n})/2}}\sum_{\bm{h}\in H^{\ell(\bm{m}+\bm{n})}_{a_1(p^n)}}\prod_{i=1}^k\prod_{j=1}^{m_i}\overline{\alpha_{p^n}(h_{i,j};t_i)}\prod_{j=m_i+1}^{m_i+n_i}\alpha_{p^n}(h_{i,j};t_i)\\
\nonumber
&\qquad\times\sum_{\substack{\bm{u}\in U(\bm{\mu}_{\bm{h}})\\\bm{\mu}_{\bm{h}}\neq 2\bm{u}}} c(\bm{\mu},\bm{u};a_1,b_0) \mathcal{S}_{p^n}^{T(\bm{\mu}),\bm{\mu}_{\bm{h}}-2\bm{u}}(a_1,b_0),
\end{align}
Using \eqref{expectation-of-products} and \eqref{completion-eq2}, we find that $\widetilde{\mathcal{M}_{p^n}}{}'(\bm{t};\bm{m},\bm{n};b_0)$ equals
\begin{equation}
\label{main-term-eval}
\begin{aligned}
&\frac{1}{p^{n\ell(\bm{m}+\bm{n})/2}}\sum_{\bm{h}\in H^{\ell(\bm{m}+\bm{n})}_{a_1(p^n)}}\prod_{i=1}^k\prod_{j=1}^{m_i}\overline{\alpha_{p^n}(h_{i,j};t_i)}\prod_{j=m_i+1}^{m_i+n_i}\alpha_{p^n}(h_{i,j};t_i)
\mathbb E\bigg(\prod_{i=1}^k\prod_{j=1}^{m_i+n_i}U_{h_{i,j}}^{\sharp}\bigg)\\
&\,=\sum_{\bm{h}\in H^{\ell(\bm{m}+\bm{n})}_{a_1(p^n)}}\prod_{i=1}^k\prod_{j=1}^{m_i}\overline{\beta(h_{i,j};t_i)}\prod_{j=m_i+1}^{m_i+n_i}\beta(h_{i,j};t_i)\mathbb E\bigg(\prod_{i=1}^k\prod_{j=1}^{m_i+n_i}U_{h_{i,j}}^{\sharp}\bigg)\\
&\,\qquad\qquad+\mathrm{O}_{\ell(\bm{m}+\bm{n})}\bigg(\frac{\log^{\ell(\bm{m}+\bm{n})}p}{p^n}\bigg)\\
&\,=\mathbb E\bigg(\!\prod_{i=1}^k\overline{\Kl_{\frac{p^n-1}{2}}(t_i;p;(a_1,b_0))}^{m_i}\Kl_{\frac{p^n-1}{2}}(t_i;p;(a_1,b_0))^{n_i}\!\bigg)\!\!+\!\mathrm{O}_{\ell(\bm{m}+\bm{n})}\bigg(\frac{\log^{\ell(\bm{m}+\bm{n})}p}{p^n}\bigg)\\
&\,=\mathbb E\Big(\prod_{i=1}^k\overline{\Kl(t_i;p;(a_1,b_0))}^{m_i}\Kl(t_i;p;(a_1,b_0))^{n_i}\Big)\!+\mathrm{O}_{\ell(\bm{m}+\bm{n})}\bigg(\frac{\log^{\ell(\bm{m}+\bm{n})}p}{p^{n/2}}\bigg),
\end{aligned}
\end{equation}
recalling the definition \eqref{KlpH-truncated} and using Proposition~\ref{random-series-properties} in the final two equalities.

Combining \eqref{main-error-isolated}, \eqref{main-term-eval}, and \eqref{error-term}, we obtain the following.

\begin{proposition}
\label{main-error-terms-proposition}
The complex moment $\widetilde{\mathcal{M}_{p^n}}(\bm{t};\bm{m},\bm{n};b_0)$ defined in \eqref{complex-moment-renorm-def} satisfies
\begin{align*}
\widetilde{\mathcal{M}_{p^n}}(\bm{t};\bm{m},\bm{n};b_0)
&=\mathbb E\Big(\prod_{i=1}^k\overline{\Kl(t_i;p;(a_1,b_0))}^{m_i}\Kl(t_i;p;(a_1,b_0))^{n_i}\Big)\\
&\qquad\qquad+\mathrm{Err}_{p^n}(\bm{t};\bm{m};\bm{n};b_0)
+\mathrm{O}_{\ell(\bm{m}+\bm{n})}\bigg(\frac{\log^{\ell(\bm{m}+\bm{n})}p}{p^{n/2}}\bigg),
\end{align*}
where the error term $\mathrm{Err}_{p^n}(\bm{t};\bm{m};\bm{n};b_0)$ is defined in \eqref{error-term}.
\end{proposition}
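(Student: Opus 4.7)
The plan is to combine the completion expansion \eqref{complex-moment-with-sums-of-products}, the structural decomposition \eqref{spn-big-sum}, and the evaluation provided by Theorem~\ref{kloosterman-moments}. Starting from \eqref{complex-moment-with-sums-of-products}, I would recognize the inner average over $a$ as the moment $\mathcal{S}_{p^n}(\bm{\mu}_{\bm{h}};a_1,b_0)$, where $\bm{\mu}_{\bm{h}}(\tau)$ counts how often the shift $\tau$ appears in the tuple $\bm{h}$, and then insert \eqref{spn-big-sum} to expand this moment as a linear combination, indexed by $\bm{u}\in U(\bm{\mu}_{\bm{h}})$, of the cleaned exponential sums $\mathcal{S}^{T,\bm{\epsilon}}_{p^n}$.

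The next step is to split off the diagonal contribution $\bm{\mu}_{\bm{h}}=2\bm{u}$, which corresponds exactly to the trivial phase $\bm{\epsilon}=\bm{0}$; everything else is by definition the error term $\mathrm{Err}_{p^n}(\bm{t};\bm{m},\bm{n};b_0)$ of \eqref{error-term}. For the diagonal part I would invoke Theorem~\ref{kloosterman-moments} to replace $\mathcal{S}^{T,\bm{0}}_{p^n}$ by $1$ up to an $\mathrm{O}(p^{-\delta_1 n})$ error, producing the intermediate expression $\widetilde{\mathcal{M}_{p^n}}{}'$ of \eqref{main-term-eval}; combining the surviving binomial factors $2^{-\bm{\mu}_{\bm{h}}(\tau)}\binom{\bm{\mu}_{\bm{h}}(\tau)}{\bm{\mu}_{\bm{h}}(\tau)/2}\delta_{2\mid\bm{\mu}_{\bm{h}}(\tau)}$ with \eqref{expectation-of-products} then rewrites this diagonal product as $\mathbb{E}\big(\prod_{i,j}U^{\sharp}_{h_{i,j}}\big)$.

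The closing step is to substitute $\alpha_{p^n}(h;t)/p^{n/2}\to\beta(h;t)$ via the second line of \eqref{completion-eq2}, recognize the resulting expression as the expectation of the finite truncations $\Kl_{(p^n-1)/2}(t_i;p;(a_1,b_0))$ defined in \eqref{KlpH-truncated}, and pass to the full random series via the $k$th-moment rate estimate in Proposition~\ref{random-series-properties}(\ref{rate-of-convergence-item}) applied with $k=\ell(\bm{m}+\bm{n})$, which furnishes an $\mathrm{O}(p^{-n/2})$ error absorbed into the stated bound. The one place where careful bookkeeping is essential---and what I expect to be the main obstacle---is this $\alpha\to\beta$ substitution inside an $\ell(\bm{m}+\bm{n})$-fold product: replacing one factor at a time yields a per-summand error $\mathrm{O}(p^{-n})$, which when summed against the $\asymp p^{n\ell(\bm{m}+\bm{n})}$ frequencies is only sustainable because the uniform estimate $|\alpha_{p^n}(h;t)|/p^{n/2}\leqslant\min(1,1/(2|h|))$ from the first line of \eqref{completion-eq2} controls the remaining factors by $\log^{\ell(\bm{m}+\bm{n})-1}(p^n)$, ultimately producing the advertised $\log^{\ell(\bm{m}+\bm{n})}p/p^{n/2}$ saving.
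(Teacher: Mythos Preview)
Your approach is essentially the same as the paper's, and is correct. One small point: you write that you would invoke Theorem~\ref{kloosterman-moments} to replace $\mathcal{S}^{T,\bm{0}}_{p^n}$ by $1$ up to an $\mathrm{O}(p^{-\delta_1 n})$ error, but this is both unnecessary and a misattribution---Theorem~\ref{kloosterman-moments} evaluates the full moment $\mathcal{S}_{p^n}(\bm{\mu};a_1,b_0)$ (and carries a non-collusion hypothesis on $T(\bm{\mu})$ that need not hold for every $\bm{h}$), whereas the diagonal sum $\mathcal{S}^{T,\bm{0}}_{p^n}(a_1,b_0)$ equals $1$ exactly, since the phase $f_{T,\bm{0}}$ vanishes identically. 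The paper simply uses this trivial evaluation to define $\widetilde{\mathcal{M}_{p^n}}{}'$ directly, and Theorem~\ref{kloosterman-moments} (via Theorem~\ref{sums-of-products-theorem}) enters only later, in bounding $\mathrm{Err}_{p^n}$ in Proposition~\ref{error-term-estimate-proposition}. Apart from this, your outline---including the $\alpha\to\beta$ substitution controlled by \eqref{completion-eq2} and the passage from the truncated to the full random series via Proposition~\ref{random-series-properties}\eqref{rate-of-convergence-item}---matches the paper's argument in \eqref{main-error-isolated}--\eqref{main-term-eval}.
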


\subsection{Estimation of error terms}
\label{error-terms-sec}
In this section, we address the term $\mathrm{Err}_{p^n}(\bm{t};\bm{m};\bm{n};b_0)$, and use the estimates on complete exponential sums from Section~\ref{sums-of-products-sec} to prove the following estimate.

\begin{proposition}
\label{error-term-estimate-proposition}
There exists a constant $\delta=\delta(\|\bm{m}\|_1+\|\bm{n}\|_1)>0$ such that the error term $\mathrm{Err}_{p^n}(\bm{t};\bm{m};\bm{n};b_0)$ defined in \eqref{error-term} satisfies
\[ \mathrm{Err}_{p^n}(\bm{t};\bm{m};\bm{n};b_0)\ll_{\ell(\bm{m}+\bm{n})} p^{-\delta n}. \]
\end{proposition}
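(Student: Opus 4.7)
The plan is to leverage Theorem~\ref{kloosterman-moments} on the inner $\bm{u}$-sum whenever the shift data in $\bm{h}$ yields well-separated stationary points, and to exploit the $1/|h|$-decay of the completion coefficients \eqref{completion-eq2} otherwise. With $M=\ell(\bm{m}+\bm{n})$ and $\delta_1,\delta_2>0$ the constants from Theorem~\ref{kloosterman-moments} for this $M$, the strategy is to split the sum over $\bm{h}$ in \eqref{error-term} into a \emph{separated} part, in which all distinct $\tau,\tau'\in T(\bm{\mu}_{\bm{h}})$ satisfy $\tau\not\equiv\tau'\pmod{p^{\lfloor\delta_2 n\rfloor}}$, and a \emph{colliding} part, in which some pair of coordinates $(i,j)\neq(i',j')$ satisfies $h_{i,j}\neq h_{i',j'}$ yet $h_{i,j}\equiv h_{i',j'}\pmod{p^{\lfloor\delta_2 n\rfloor}}$.

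For the separated part, comparing \eqref{spn-big-sum} with \eqref{error-term} reveals that the inner $\bm{u}$-sum is precisely $\mathcal{S}_{p^n}(\bm{\mu}_{\bm{h}};a_1,b_0)$ with its $\bm{\mu}_{\bm{h}}=2\bm{u}$ principal term (identified in \eqref{main-term-eval}) removed. Theorem~\ref{kloosterman-moments} then bounds this by $\mathrm{O}_M(p^{-\delta_1 n})$, and summing the completion factors freely via the $L^1$-type estimate $\sum_h|\alpha_{p^n}(h;t)|/p^{n/2}\ll\log(p^n)$ yields a separated contribution of $\mathrm{O}_M(p^{-\delta_1 n}(\log p^n)^M)$.

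For the colliding part, the trivial bound $|\mathcal{S}_{p^n}(\bm{\mu}_{\bm{h}};a_1,b_0)|\leqslant 2^M$ (from $|\Kl_{p^n}|\leqslant 2$) reduces the task to a weighted count of colliding tuples. A union bound over the $\mathrm{O}(M^2)$ choices of colluding pair of coordinates, together with free summation (costing at most $(\log p^n)^{M-2}$) in the remaining $M-2$ indices, isolates
\[
S(\kappa):=\sum_{\substack{h_1,h_2\in H_{a_1(p^n)},\,h_1\neq h_2\\h_1\equiv h_2\,(p^\kappa)}}\frac{|\alpha_{p^n}(h_1;t)|}{p^{n/2}}\cdot\frac{|\alpha_{p^n}(h_2;t')|}{p^{n/2}}
\]
at $\kappa=\lfloor\delta_2 n\rfloor$. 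Using $|\alpha_{p^n}(h;t)|/p^{n/2}\ll 1/(1+|h|)$ and the elementary bound $\sum_{m\equiv r\,(p^\kappa),\,|m|<p^n/2}1/(1+|m|)\ll (\log p^n)/p^\kappa+1/(1+|r|)$ on the inner $h_2$-sum, a direct calculation yields $S(\kappa)\ll(\log p^n)^2/p^\kappa$, whence a colliding contribution of $\mathrm{O}_M(p^{-\lfloor\delta_2 n\rfloor}(\log p^n)^M)$.

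Combining the two cases and choosing $\delta$ strictly smaller than $\min(\delta_1,\delta_2)$ to absorb the polylogarithms will yield Proposition~\ref{error-term-estimate-proposition}. The main obstacle is the colliding regime: the stationary-phase analysis behind Theorem~\ref{kloosterman-moments} breaks down precisely when critical points fuse modulo high powers of $p$, so the required power savings must come instead from the joint decay of the two colluding completion coefficients along the arithmetic progression $h_1\equiv h_2\pmod{p^{\lfloor\delta_2 n\rfloor}}$; conveniently, the threshold $p^{\lfloor\delta_2 n\rfloor}$ beyond which the moment estimate takes over is exactly where this alternative trade-off balances.
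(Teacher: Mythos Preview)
Your proof is correct and follows essentially the same strategy as the paper's: split according to whether the shifts in $T(\bm{\mu}_{\bm{h}})$ are separated modulo $p^{\lfloor\delta_2 n\rfloor}$, apply the exponential-sum input on the separated part, and harvest the saving from the coefficient decay along an arithmetic progression on the colliding part. The only cosmetic difference is that you invoke the packaged Theorem~\ref{kloosterman-moments} (recognizing the inner $\bm{u}$-sum as $\mathcal{S}_{p^n}(\bm{\mu}_{\bm{h}};a_1,b_0)$ minus its main term), whereas the paper applies Theorem~\ref{sums-of-products-theorem} term-by-term to each $\mathcal{S}_{p^n}^{T(\bm{\mu}),\bm{\mu}_{\bm{h}}-2\bm{u}}$; these are equivalent. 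For the colliding estimate $S(\kappa)\ll(\log p^n)^2/p^\kappa$, your stated intermediate bound with the $1/(1+|r|)$ term is workable but slightly awkward; the paper's route---symmetrize so that $|h_{i',j'}|\geqslant|h_{i_0,j_0}|$, then use $|h_{i',j'}|\geqslant\tfrac12|h_{i',j'}-h_{i_0,j_0}|\geqslant\tfrac12|\ell|p^\kappa$ for the inner sum---is cleaner and avoids the spurious $O(1)$ that a careless handling of that extra term can produce.
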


\begin{proof}
For every $\bm{h}\in H_{a_1(p^n)}^{\ell(\bm{m}+\bm{n})}$, let
\[ \Delta(\bm{h})=\min\big\{\|h_{i,j}-h_{i',j'}\|_p:(i,j)\neq(i',j')\big\}, \]
where here and later in the proof all $(i,j)$ range through the set of indices $\{1\leqslant i\leqslant k,\,1\leqslant j\leqslant m_i+n_i\}$. Then, Theorem~\ref{sums-of-products-theorem} shows that the sums $\mathcal{S}_{p^n}^{T(\bm{\mu}_{\bm{h}},\bm{\mu}_{\bm{h}}-2\bm{u})}(a_1,b_0)$ appearing in $\mathrm{Err}_{p^n}(\bm{t};\bm{m};\bm{n};b_0)$ exhibit power cancellation for $\Delta(\bm{h})>p^{-\delta_2n}$. With this in mind, we decompose
\begin{align*}
\mathrm{Err}_{p^n}(\bm{t};\bm{m};\bm{n};b_0)
&=\mathrm{Err}_{p^n}(\bm{t};\bm{m};\bm{n};b_0)^{\circ}+ \mathrm{Err}_{p^n}(\bm{t};\bm{m};\bm{n};b_0)^{\mathrm{sing}}\\&:=
\frac{1}{p^{n\ell(\bm{m}+\bm{n})/2}}\sum_{\substack{\bm{h}\in H^{\ell(\bm{m}+\bm{n})}_{a_1(p^n)}\\\Delta(\bm{h})> p^{-\delta_2n}}}\cdots+\frac{1}{p^{n\ell(\bm{m}+\bm{n})/2}}\sum_{\substack{\bm{h}\in H^{\ell(\bm{m}+\bm{n})}_{a_1(p^n)}\\\Delta(\bm{h})\leqslant p^{-\delta_2n}}}\cdots,
\end{align*}
with terms in both summations exactly as in \eqref{error-term}.

Using Theorem~\ref{sums-of-products-theorem} and the estimate \eqref{completion-eq2} of Lemma~\ref{completion-lemma}, we can estimate
\begin{align*}
\mathrm{Err}_{p^n}(\bm{t};\bm{m};\bm{n};b_0)^{\circ}
&\ll_{\ell(\bm{m}+\bm{n})}\sum_{\bm{h}\in H^{\ell(\bm{m}+\bm{n})}_{a_1(p^n)}}
\frac1{\prod_{i=1}^k\prod_{j=1}^{m_i+n_i}(|h_{i,j}|+1)} p^{-\delta_1n}\\
&\ll_{\ell(\bm{m}+\bm{n})}\log^{\ell(\bm{m}+\bm{n})}(p^n)p^{-\delta_1 n},
\end{align*}
which is clearly acceptable. As for $\mathrm{Err}_{p^n}(\bm{t};\bm{m};\bm{n};b_0)^{\mathrm{sing}}$, we begin by noting the simple estimate that, for every $h\in\mathbb{Z}$,
\[ \sum_{\substack{\{h'\neq h:\\M\geqslant |h'|\geqslant |h|,\\h'\equiv h\bmod p^r\}}}\frac1{|h|+1}\ll\frac{\log M}{p^r}, \]
simply by writing $h'=h+p^rl$ with $l\in\mathbb{Z}$. Using this we can estimate
\begin{align*}
&\mathrm{Err}_{p^n}(\bm{t};\bm{m};\bm{n};b_0)^{\mathrm{sing}}\\
&\qquad\ll_{\ell(\bm{m}+\bm{n})}\sum_{(i_0,j_0)\neq(i',j')}\sum_{\substack{\bm{h}\in H^{\ell(\bm{m}+\bm{n})}_{a_1(p^n)}\\ h_{i',j'}\neq h_{i_0,j_0}\\ |h_{i',j'}|\geqslant |h_{i_0,j_0}|\\ h_{i',j'}\equiv h_{i_0,j_0}\bmod{p^{\lfloor\delta_2n\rfloor}}}}
\frac1{\mathop{\prod_{i=1}^k\prod_{j=1}^{m_i+n_i}}\limits_{\substack{(i,j)\neq (i',j')}}(|h_{i,j}|+1)}\frac1{|h_{i',j'}|+1} \\
&\qquad\ll_{\ell(\bm{m}+\bm{n})}\log^{\ell(\bm{m}+\bm{n})}(p^n)p^{-\delta_2 n},
\end{align*}
which is also acceptable. Putting everything together proves the proposition.
\end{proof}

\section{Complete exponential sums}
\label{sums-of-products-sec}

\subsection{Hensel's lemma}
In this section, we prove a version of Hensel's lemma for roots of a congruence of the form
\[ f(x)\equiv 0\pmod{p^n}, \]
where, for some domain $X\subseteq\mathbb{Z}/p^n\mathbb{Z}$, $f:X\to\mathbb{Z}/p^n\mathbb{Z}$ is a function satisfying a condition resembling differentiability (in particular, $f$ could arise from an honestly analytic function on a domain inside $\mathbb{Z}_p$), and roots may be singular in a controlled fashion.

Importantly we remove the condition that $f$ is a polynomial, which is used in, say, \cite[Theorem 2.24]{NivenZuckermanMontgomery1991}. Of course, since we are talking about functions on $X\subseteq\mathbb{Z}/p^n\mathbb{Z}$, they can automatically formally be expressed as polynomials; however, these are typically of very high degree, which can be deadly in applications (cf.~\cite{RicottaRoyer2018}). More properly speaking we remove the requirement to \emph{think} of $f$ as a polynomial; rather, we typically think of $f$ as a reduction of a fixed $p$-adically analytic function on a subdomain of $\mathbb{Z}_p$.

\begin{lemma}[Hensel's lemma with singular roots]
\label{hensel-lemma}
Let $n\geqslant\kappa_0\geqslant 1$, let a domain $X\subseteq\mathbb{Z}/p^n\mathbb{Z}$ be $(p^{\kappa_0}\mathbb{Z}/p^n\mathbb{Z})$-translation invariant, and let $f,f_1:X\to\mathbb{Z}/p^n\mathbb{Z}$ be functions such that
\begin{equation}
\label{diffble-cond}
f(x+p^{\kappa}t)\equiv f(x)+f_1(x)p^{\kappa}t\pmod{p^{\min(2\kappa,n)}}\quad (x\in X,\,\,t\in\mathbb{Z}/p^n\mathbb{Z},\,\,\kappa\geqslant\kappa_0).
\end{equation}
Suppose that $a\in X$ satisfies for some $0\leqslant\rho<n$
\[ f(a)\equiv 0\pmod{p^j},\quad p^{\rho}\exmid f_1(a),\quad j\geqslant \min(2\rho+1,\rho+\kappa_0). \]
Then:
\begin{enumerate}
\item\label{hensel1} If $b\in X$ satisfies $b\equiv a\pmod{p^{j-\rho}}$, then $f(b)\equiv 0\pmod{p^j}$ and $p^{\rho}\exmid f_1(b)$.
\item\label{hensel2} There is a unique $t$ modulo $p$ such that $f(a+tp^{j-\rho})\equiv 0\pmod{p^{j+1}}$.
\item\label{hensel3} There is a unique $t$ modulo $p^{n-j}$ such that
\[ f(a+tp^{j-\rho})\equiv 0\pmod{p^n}. \]
\end{enumerate}
\end{lemma}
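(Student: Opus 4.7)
The plan is to treat the three parts in order: (1) is a stability assertion around the approximate root $a$, (2) is a single Hensel-style lift from level $j$ to level $j+1$, and (3) follows by iterating (2) up to level $n$. Throughout, the key maneuver is to apply the differentiability hypothesis \eqref{diffble-cond} with $\kappa = j - \rho$, since that is the natural $p$-adic scale on which the lifting takes place. The numerical condition $j \geqslant \min(2\rho+1,\rho+\kappa_0)$ is designed precisely so that, in the relevant case, $\kappa = j-\rho \geqslant \kappa_0$ (so that \eqref{diffble-cond} applies) while also $2\kappa \geqslant j+1$ (so that the linear approximation in \eqref{diffble-cond} controls $f$ to an accuracy sufficient for lifting one level at a time).

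For part \eqref{hensel1}, I write $b = a + p^{j-\rho} s \in X$ and plug this into \eqref{diffble-cond}:
\[ f(b) \equiv f(a) + f_1(a)\cdot p^{j-\rho} s \pmod{p^{\min(2(j-\rho),n)}}. \]
Since $f(a) \equiv 0 \pmod{p^j}$ and $f_1(a) = p^\rho u$ with $u \in (\mathbb{Z}/p^n\mathbb{Z})^\times$, the right-hand side lies in $p^j\mathbb{Z}/p^n\mathbb{Z}$, yielding $f(b) \equiv 0 \pmod{p^j}$. For the exact divisibility $p^\rho \exmid f_1(b)$, I apply \eqref{diffble-cond} separately around $a$ and around $b$, evaluate both at $b + p^{j-\rho} t'$, and subtract: the resulting identity $(f_1(b) - f_1(a))\cdot p^{j-\rho} t' \equiv 0 \pmod{p^{2(j-\rho)}}$ for every $t'$ forces $f_1(b) \equiv f_1(a) \pmod{p^{j-\rho}}$, which combined with $j - \rho > \rho$ transfers exact divisibility from $a$ to $b$.

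Part \eqref{hensel2} is then immediate from the same linear approximation: with $b = a + tp^{j-\rho}$ and writing $f(a) = p^j v$,
\[ f(a + tp^{j-\rho}) \equiv p^j\bigl(v + u t\bigr) \pmod{p^{j+1}}, \]
so the congruence modulo $p^{j+1}$ is equivalent to the linear equation $v + u t \equiv 0 \pmod{p}$, which has a unique solution $t \pmod{p}$. Part \eqref{hensel3} then follows by induction on $j' \in \{j, j+1, \dots, n-1\}$: given a lift $a_{j'} = a + t_{j'} p^{j-\rho}$ with $f(a_{j'}) \equiv 0 \pmod{p^{j'}}$, part \eqref{hensel1} applied at level $j'$ preserves $p^\rho \exmid f_1(a_{j'})$, and part \eqref{hensel2} applied at level $j'$ with $a_{j'}$ in place of $a$ produces a unique next-level lift $a_{j'+1} = a_{j'} + s p^{j'-\rho}$ with $s$ unique modulo $p$. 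Concatenating these uniquely determined choices of $s$ yields the claim.

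The main technical point, and the only place where some care is needed, is to verify that at each step of the induction the differentiability hypothesis \eqref{diffble-cond} may be legitimately invoked: namely that $j' - \rho \geqslant \kappa_0$, that $2(j'-\rho) \geqslant j' + 1$, and that $a_{j'} \in X$. The first two constraints follow from $j' \geqslant j \geqslant \min(2\rho+1,\rho+\kappa_0)$ after a short case analysis; the third uses the $(p^{\kappa_0}\mathbb{Z}/p^n\mathbb{Z})$-translation invariance of $X$ together with the fact that each increment $a_{j'+1} - a_{j'}$ has $p$-adic valuation at least $j' - \rho \geqslant \kappa_0$, so that the lifting stays inside $X$ at every step.
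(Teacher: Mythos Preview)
Your proof is correct and follows essentially the same approach as the paper: apply the differentiability hypothesis \eqref{diffble-cond} with $\kappa=j-\rho$, reduce the lift from level $j$ to $j+1$ to a nondegenerate linear congruence modulo $p$, and iterate. Your handling of the exact-divisibility claim $p^{\rho}\exmid f_1(b)$---varying an auxiliary parameter $t'$ and taking $t'=1$ to obtain $f_1(b)\equiv f_1(a)\pmod{p^{j-\rho}}$---is a slight streamlining of the paper's version, which instead compares the expansions of $f(b)-f(a)$ around $a$ and around $b$ and then handles the case $p\mid t$ by a short bootstrap.
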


\begin{proof}
Along the similar lines as in \cite[Theorem 2.24]{NivenZuckermanMontgomery1991}, since $2j-2\rho\geqslant j+1$, using \eqref{diffble-cond} we have for $b=a+tp^{j-\rho}\in X$ that
\begin{equation}
\label{diffble-applied}
f(b)=f(a+tp^{j-\rho})\equiv f(a)+f_1(a)tp^{j-\rho}\pmod{p^{j+1}}.
\end{equation}
In particular, since the right-hand side is divisible by $p^j$, so is the left side, and $f(b)\equiv 0\pmod{p^j}$. Moreover using \eqref{diffble-cond} with the roles of $a$ and $b$ switched, we find that
\begin{equation}
\label{exmid-proof}
f_1(a)tp^{j-\rho}\equiv f(b)-f(a)\equiv -f_1(b)(-tp^{j-\rho})\pmod{p^{j+1}},
\end{equation}
from which it follows in particular that $p^{\rho}\exmid f_1(b)$. This is immediate from \eqref{exmid-proof} if $p\nmid t$, anf otherwise follows by first arguing that $p^{\rho}\exmid f_1(a+p^{j-\rho})$ and then $p^{\rho}\exmid f_1(b)$. This completes the proof of \eqref{hensel1}.

Dividing both sides of \eqref{diffble-applied} by $p^j$, the above congruence is equivalent to the non-degenerate linear congruence
\[ \frac{f(a+tp^{j-\rho})}{p^j}\equiv\frac{f(a)}{p^j}+t\frac{f_1(a)}{p^{\rho}}\pmod p, \]
which has a unique solution in $t$ modulo $p$. This proves \eqref{hensel2}, and \eqref{hensel3} follows by induction.
\end{proof}

\subsection{Estimates on complete exponential sums}
Let $T\subseteq\mathbb{Z}/p^n\mathbb{Z}$ and $b_0\in(\mathbb{Z}/p^n\mathbb{Z})^{\times}$. Define
\begin{equation}
\label{range-of-definition}
(\mathbb{Z}/p^n\mathbb{Z})^{[T]}=\big\{a\in\mathbb{Z}/p^n\mathbb{Z}:(a-\tau)b_0\in(\mathbb{Z}/p^n\mathbb{Z})^{\times 2}\quad\text{for every }\tau\in T\big\}.
\end{equation}
Further, for every vector $\bm{\epsilon}=(\epsilon_{\tau})_{\tau\in T}\in\mathbb{Z}^T$, we define a function $f_{T,\bm{\epsilon}}:(\mathbb{Z}/p^n\mathbb{Z})^{[T]}\to\mathbb{Z}/p^n\mathbb{Z}$ by
\begin{equation}
\label{f-T-eps}
f_{T,\bm{\epsilon}}(a)=\sum_{\tau\in T}\epsilon_{\tau}((a-\tau)b_0)_{1/2}.
\end{equation}
where $(\cdot)_{1/2}$ is the branch of square root fixed in \S\ref{sqroot-section}. This is the phase appearing in \eqref{sum-of-exponentials}, and hence the central importance of Theorem~\ref{sums-of-products-theorem} for the proof of Theorem~\ref{main}. The principal result of this section is the following theorem.

\begin{theorem}
\label{sums-of-products-theorem}
For every $M>0$, there exist constants $\delta_i=\delta_i(M)>0$ ($i=1,2$) such that for every $n\in\mathbb{N}$, $T\subseteq\mathbb{Z}/p^n\mathbb{Z}$, every set $X\subseteq(\mathbb{Z}/p^n\mathbb{Z})^{[T]}$ as in \eqref{range-of-definition} that is invariant under translations by $p^{\lfloor\delta_2n\rfloor}\mathbb{Z}/p^n\mathbb{Z}$, $\bm{\epsilon}=(\epsilon_{\tau})_{\tau\in T}\in\mathbb{Z}^T$ with $\|\bm{\epsilon}\|_1=\sum_{\tau\in T}|\epsilon_{\tau}|\leqslant M$, and $f_{T,\bm{\epsilon}}$ as in \eqref{f-T-eps}, either
\[ \sum_{a\in X}e\left(\frac{f_{T,\bm{\epsilon}}(a)}{p^n}\right)\ll_{|T|}p^{(1-\delta_1)n} \]
or
\[ \tau\equiv\tau'\pmod{p^{\lfloor\delta_2n\rfloor}} \]
for some $\tau,\tau'\in T$ with $\tau\neq\tau'$ and $\epsilon_{\tau},\epsilon_{\tau'}\neq 0$.
\end{theorem}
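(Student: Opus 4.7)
The plan is to apply the $p$-adic method of stationary phase (Lemma~\ref{statphase-lemma}) to reduce the full sum to a count of stationary points, and then to bound that count via a Vandermonde-type determinant argument combined with Hensel's lemma for singular roots (Lemma~\ref{hensel-lemma}). First, assume without loss that $\epsilon_\tau\neq 0$ for every $\tau\in T$, so $|T|\leqslant M$. Applying \eqref{sqroot-diff} with $m=1$ termwise to $f_{T,\bm{\epsilon}}$ gives, for every $\kappa\geqslant 1$,
\[ f_{T,\bm{\epsilon}}(a+p^{\kappa}t)\equiv f_{T,\bm{\epsilon}}(a)+\tfrac{1}{2}b_0\,g_0(a)\cdot p^{\kappa}t\pmod{p^{2\kappa}}, \]
where, for $k\geqslant 0$, we set $g_k(a):=\sum_{\tau\in T}\epsilon_\tau\big((a-\tau)b_0\big)^{-(2k+1)}_{1/2}$. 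Identifying the stationary-phase derivative $f_1=\tfrac{1}{2}b_0 g_0$ in the sense of \eqref{diffble-eq1} and applying Lemma~\ref{statphase-lemma}(\ref{MSP-claim1}) with $\kappa=\lceil n/2\rceil$ reduces the problem to establishing
\[ \#\{a\in X/p^{\kappa}\mathbb{Z}:g_0(a)\equiv 0\bmod p^{n-\kappa}\}\ll_M p^{\kappa-\delta_1 n}. \]

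To count these stationary points, form the $|T|\times|T|$ matrix $V(a)=\big(((a-\tau)b_0)^{-(2k+1)}_{1/2}\big)_{0\leqslant k<|T|,\,\tau\in T}$, for which $V(a)\bm{\epsilon}=\big(g_0(a),\ldots,g_{|T|-1}(a)\big)^{\!\top}$. Factoring the unit $((a-\tau)b_0)^{-1}_{1/2}$ from each $\tau$-column and using $((a-\tau)b_0)^{-2k}_{1/2}=((a-\tau)b_0)^{-k}$ leaves a genuine Vandermonde matrix in the variables $y_\tau=((a-\tau)b_0)^{-1}$. Since $y_{\tau'}-y_{\tau}=(\tau-\tau')/\big(b_0(a-\tau)(a-\tau')\big)$ has $p$-valuation $v_p(\tau'-\tau)$ on $X$, we obtain
\[ v_p(\det V(a))=\sum_{\tau<\tau'}v_p(\tau'-\tau)\leqslant\binom{M}{2}\delta_2 n \]
under the non-collusion hypothesis. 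By Cramer's rule, were all of $g_0(a_0),\ldots,g_{|T|-1}(a_0)$ simultaneously divisible by $p^{L_0}$ with $L_0:=\binom{M}{2}\delta_2 n+\log_p M+1$, then the integer vector $\bm{\epsilon}$ would have a coordinate of $p$-valuation at least $1+\log_p M$, contradicting $|\epsilon_\tau|\leqslant M$. Hence at every stationary point $a_0$ there exists some index $k^{\ast}\in\{1,\ldots,|T|-1\}$ with $v_p(g_{k^{\ast}}(a_0))\leqslant L_0$.

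It remains to convert this non-degeneracy into a quantitative count. Stratify stationary points by $k^{\ast}$ and by $\rho=v_p(g_{k^{\ast}}(a_0))\leqslant L_0$. On the generic stratum $k^{\ast}=1$, the second-order expansion from \eqref{sqroot-diff} produces a quadratic term of valuation $\rho$, and after pulling out $p^{\rho}$ and completing the square, Lemma~\ref{statphase-lemma}(2) yields square-root savings up to an admissible $p^{\rho}\leqslant p^{L_0}$ loss. For $k^{\ast}>1$, apply Hensel's lemma with singular roots (Lemma~\ref{hensel-lemma}) to $g_0$ — whose ``derivative'' in the sense of \eqref{diffble-cond} is, up to units, $-\tfrac{1}{2}b_0 g_1$ — to trade each singular stationary point for at most $p^{\mathrm{O}_M(\rho)}$ Hensel lifts, and then recurse on $g_1$ in place of $g_0$ (the same differentiability calculus applies, simply shifting the index of the $g_k$'s by one). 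The recursion terminates after at most $|T|-1\leqslant M-1$ passes by the Vandermonde obstruction of the preceding paragraph. Since $L_0=\mathrm{O}_M(\delta_2 n)$ throughout, choosing $\delta_2$ sufficiently small in terms of $M$ and $\delta_1$ a correspondingly small multiple of $\delta_2$ produces the required bound.

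The main obstacle is making the iteration in the last paragraph quantitatively precise: successive singular strata must be traversed with careful bookkeeping of the Hensel multiplicities $p^{\rho}$ and the evolving ``effective phase'' at each pass, and one must verify that the non-collusion hypothesis is inherited at each reduction so that the Vandermonde determinant bound continues to cap the singularity depth. In essence, the proof renders quantitative the heuristic that high-order critical points of $f_{T,\bm{\epsilon}}$ can exist only through high $p$-adic collusion of the shifts $\tau\in T$ — precisely the configuration the theorem's hypothesis excludes.
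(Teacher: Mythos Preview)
Your proposal uses exactly the same three ingredients as the paper --- one pass of the stationary phase Lemma~\ref{statphase-lemma}, the Vandermonde/Cramer argument over the shifts $\tau\in T$, and Hensel's lemma with singular roots (Lemma~\ref{hensel-lemma}) --- and arrives at the same conclusion. The paper, however, organizes these in the reverse order, which turns out to dissolve precisely the bookkeeping obstacle you flag at the end. In the paper, Proposition~\ref{multiple-stationary-phase} runs an induction on the order $J$ of simultaneous vanishing: at each step the set $X\langle J,p^r\rangle$ is split into a \emph{flat} part where $v_p(f^{(J+1)})\leqslant r':=\lfloor(r-1)/2\rfloor$ (counted trivially via Hensel to size $\ll p^{n-r/2}$) and a \emph{sharp} part contained in $X\langle J+1,p^{r'}\rangle$; this yields $\delta_1(J)=\delta_2(J)=2^{-J}$. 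Only after $|T|$ iterations does the Vandermonde argument enter, as Proposition~\ref{deep-singular}, to show $\langle f,|T|,p^r\rangle=\emptyset$ under non-collusion. Your ``recurse on $g_1$ in place of $g_0$'' is essentially this same flat/sharp induction, but because you invoke Vandermonde up front and then stratify by $(k^*,\rho)$, you must track a different modulus on each $g_j$ simultaneously, which is where the quantitative bookkeeping becomes delicate. Two smaller points: your separate treatment of $k^*=1$ via second-order stationary phase (completing the square after pulling out $p^\rho$) is not needed --- the paper simply counts every flat stratum trivially, which is easier and already sufficient; and in the recursion you are not really changing the phase but rather accumulating conditions $g_0\equiv g_1\equiv\cdots\equiv 0$ to increasing order, which is precisely the paper's induction on $J$.
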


The function $f_{T,\bm{\epsilon}}$ shown in \eqref{f-T-eps} may, of course, be understood as the reduction of a properly $p$-adic analytic function on an open domain whose reduction modulo $p^n\mathbb{Z}_p$ is $(\mathbb{Z}/p^n\mathbb{Z})^{[T]}$. Though we will not explicitly use the fact that these are derivatives of $f$, we consider for every $j\in\mathbb{Z}_{\geqslant 0}$ the function $f_{T,\bm{\epsilon}}^{(j)}:(\mathbb{Z}/p^n\mathbb{Z})^{[T]}\to\mathbb{Z}/p^n\mathbb{Z}$ given by
\begin{equation}
\label{derivatives}
f_{T,\bm{\epsilon}}^{(j)}(a)=\sum_{\tau\in T}\epsilon_{\tau}\tbinom{1/2}j((a-\tau)b_0)_{1/2}^{1-2j}.
\end{equation}

\begin{definition}
Let $n\in\mathbb{N}$, $T\subseteq\mathbb{Z}/p^n\mathbb{Z}$, $\bm{\epsilon}\in\mathbb{Z}^T$, and let $f_{T,\bm{\epsilon}}$ be as in \eqref{f-T-eps}. For $J\in\mathbb{N}$, $r\in\mathbb{Z}_{\geqslant 0}$, we say that $a\in (\mathbb{Z}/p^n\mathbb{Z})^{[T]}$ is \emph{$p^r$-singular of order $J$} for $f_{T,\bm{\epsilon}}$ if
\[ f^{(j)}(a)\equiv 0\pmod {p^r}\qquad \text{for every }1\leqslant j\leqslant J, \]
and we denote the set of such residues as $\langle f_{T,\bm{\epsilon}},J,p^r\rangle$.
\end{definition}

The notion of $p^r$-singular solutions of increasing order underlies the following two statements, which are the key propositions of this section.

\begin{proposition}
\label{multiple-stationary-phase}
For every $J\in\mathbb{N}$, there exist constants $\delta_i=\delta_i(J)>0$ ($i=1,2$) such that for every $n\in\mathbb{N}$, $T\subseteq\mathbb{Z}/p^n\mathbb{Z}$, every set $X\subseteq (\mathbb{Z}/p^n\mathbb{Z})^{[T]}$ as in \eqref{range-of-definition} that is invariant under translations by $p^{\lfloor\delta_2n\rfloor}\mathbb{Z}/p^n\mathbb{Z}$, $\bm{\epsilon}\in\mathbb{Z}^T$,
and $f_{T,\bm{\epsilon}}$ as in \eqref{f-T-eps},
\[ \sum_{a\in X}e\left(\frac{f_{T,\bm{\epsilon}}(a)}{p^n}\right)
=\sum_{a\in X\cap\langle f_{T,\bm{\epsilon}},J,p^{\lfloor\delta_2n\rfloor}\rangle}e\left(\frac{f_{T,\bm{\epsilon}}(a)}{p^n}\right)
+\mathrm{O}_J\big(p^{(1-\delta_1)n}\big). \]
\end{proposition}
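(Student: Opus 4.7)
The plan is to prove the result by induction on $J$, combining Lemma~\ref{statphase-lemma}(1) with Lemma~\ref{hensel-lemma} to iteratively pin down the contributing residues to progressively higher-order $p^r$-singular strata.

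For the base case $J = 1$, I would apply Lemma~\ref{statphase-lemma}(1) with $\kappa = n - \lfloor \delta_2 n \rfloor$ for any $\delta_2 \in (0, 1/2]$ and $f_1 = f^{(1)}_{T,\bm{\epsilon}}$ as in \eqref{derivatives}. The required differentiability hypothesis \eqref{diffble-eq1} for $f_{T,\bm{\epsilon}}$ follows by applying the square-root differentiability \eqref{sqroot-diff} termwise in \eqref{f-T-eps}, and the conclusion of Lemma~\ref{statphase-lemma}(1) is then the exact identity
\begin{equation*}
\sum_{a \in X} e\Big(\frac{f_{T,\bm{\epsilon}}(a)}{p^n}\Big) = \sum_{a \in X \cap \langle f_{T,\bm{\epsilon}}, 1, p^{\lfloor \delta_2 n\rfloor}\rangle} e\Big(\frac{f_{T,\bm{\epsilon}}(a)}{p^n}\Big),
\end{equation*}
with zero error term, so any $\delta_1(1) > 0$ suffices.

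For the inductive step from $J - 1$ to $J$, set $r_{J-1} = \lfloor \delta_2^{(J-1)} n\rfloor$ and $r_J = \lfloor \delta_2^{(J)} n\rfloor$, with $\delta_2^{(J)}$ chosen substantially smaller than $\delta_2^{(J-1)}$. The inductive hypothesis concentrates the sum on $X \cap \langle f_{T,\bm{\epsilon}}, J-1, p^{r_{J-1}}\rangle$ modulo an error $\mathrm{O}_J(p^{(1-\delta_1^{(J-1)})n})$, so it suffices to bound the contribution from $a$ in this set for which $\rho := \ord_p f^{(J)}_{T,\bm{\epsilon}}(a) < r_J$. For each admissible $\rho$, Lemma~\ref{hensel-lemma} applied to $f^{(J-1)}_{T,\bm{\epsilon}}(a) \equiv 0 \pmod{p^{r_{J-1}}}$ with $f^{(J)}_{T,\bm{\epsilon}}$ as its effective derivative (valid provided $r_{J-1} \geqslant 2\rho + 1$) parametrizes the corresponding stratum as a union of cosets $a_* + p^{r_{J-1}-\rho}\mathbb{Z}/p^n\mathbb{Z}$. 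On each coset, set $\kappa = r_{J-1} - \rho$ and Taylor-expand $f_{T,\bm{\epsilon}}$ via iterated \eqref{sqroot-diff} up to order $J$: for $j < J$, the term $f^{(j)}_{T,\bm{\epsilon}}(a_*)(p^{\kappa}t)^j$ has $p$-adic valuation at least $r_{J-1} + j\kappa$ and contributes a near-constant phase modulo $p^n$, whereas the $J$-th term factors as $p^{\rho + J\kappa} u(a_*) t^J$ with $u(a_*)$ a $p$-adic unit. The residual inner sum over $t$ is then a polynomial exponential sum whose leading coefficient is a unit modulo $p^{n-\rho-J\kappa}$, and a further application of Lemma~\ref{statphase-lemma}(1) (or a standard $p$-adic Weyl-type inequality for polynomials of degree $J$) secures a power saving $|S(a_*)| \ll_J p^{(n-\kappa)(1-\eta(J))}$ for some $\eta(J) > 0$. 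Summing over the Hensel cosets and over $\rho < r_J$ yields the required bound $\mathrm{O}_J(p^{(1-\delta_1^{(J)})n})$.

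The main obstacle is the careful choice of parameters so that simultaneously (i) the Hensel lifting applies uniformly across $\rho < r_J$ (needing $r_{J-1} \geqslant 2r_J + 1$), (ii) the Taylor expansion to order $J$ is valid modulo $p^n$ with the $(J+1)$-st order tail genuinely negligible, (iii) the modulus $p^{n - \rho - J\kappa} = p^{n + (J-1)\rho - Jr_{J-1}}$ of the final polynomial sum remains a positive fraction of $n$ (forcing $\delta_2^{(J-1)} < 1/J$), and (iv) the binomial coefficients $\binom{1/2}{j}$ entering the Taylor expansion are $p$-adically bounded --- automatic since $\kappa \geqslant 1 > 1/(p-1)$ keeps the expansion within the radius of $p$-adic convergence of the square root. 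Standard bookkeeping then yields $\delta_2^{(J)} \asymp \delta_2^{(J-1)}/(J+1)$ and $\delta_1^{(J)}$ degrading commensurately, both remaining positive for every fixed $J$.
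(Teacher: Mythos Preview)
Your base case $J=1$ is correct and matches the paper.

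For the inductive step, however, you take a substantially more involved route than necessary, and one of your key claims is wrong. The paper's step is much simpler: having localized to $X\langle J,p^r\rangle$, one splits according to whether $\rho:=\ord_p f^{(J+1)}_{T,\bm{\epsilon}}(a)\leqslant r':=\lfloor (r-1)/2\rfloor$ (the $\flat$ part) or not (the $\sharp$ part). On the $\flat$ part, the singular Hensel Lemma~\ref{hensel-lemma} applied to $f^{(J)}_{T,\bm{\epsilon}}(x)\equiv 0\pmod{p^{2\rho+1}}$ shows there are at most $\sum_{\rho\leqslant r'}p^{\rho+1}\ll p^{r'}$ residues modulo $p^r$, hence $\ll p^{n-r/2}$ points in all, and the sum over them is bounded \emph{trivially}; no further exponential-sum estimate is used. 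The $\sharp$ part is by definition contained in $X\langle J+1,p^{r'}\rangle$, closing the induction with $\delta_2(J)=2^{-J}$.

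Your plan instead attempts to extract genuine cancellation on each Hensel coset via a degree-$J$ Taylor expansion and a Weyl-type bound. The assertion that for $j<J$ the Taylor term $f^{(j)}_{T,\bm{\epsilon}}(a_*)(p^{\kappa}t)^j$ ``contributes a near-constant phase modulo $p^n$'' is false: under your own constraint $\delta_2^{(J-1)}<1/J$ its valuation satisfies $r_{J-1}+j\kappa\leqslant Jr_{J-1}<n$, so the term oscillates (indeed it oscillates \emph{more} than the $J$-th term, whose valuation $\rho+J\kappa$ exceeds $r_{J-1}+j\kappa$ for every $j<J$). One could try to rescue the argument by noting that Weyl differencing is insensitive to the lower-degree coefficients; but then you must also ensure that the Taylor tail of order $>J$ vanishes modulo $p^n$, i.e.\ $(J+1)\kappa\geqslant n$, forcing the additional lower bound $r_{J-1}-\rho\geqslant n/(J+1)$ that you have not imposed and that interacts delicately with the upper bound $r_{J-1}<n/J$. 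Even granting all the bookkeeping, the resulting $\delta_2^{(J)}$ degrades factorially rather than the paper's $2^{-J}$, all to replace a one-line counting bound that already suffices.
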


\begin{proposition}
\label{deep-singular}
For every $M_0,M_1>0$, there exists constants $\delta=\delta(M_0)>0$ and $\varrho=\varrho(M_1)$ such that for every $n\in\mathbb{N}$, $T\subseteq\mathbb{Z}/p^n\mathbb{Z}$ with $|T|\leqslant M_0$, $\bm{\epsilon}\in\mathbb{Z}^T$ with $\|\bm{\epsilon}\|_1\leqslant M_1$, $f_{T,\bm{\epsilon}}$ as in \eqref{f-T-eps}, and $r\in\mathbb{N}$, 
\[ \min\big\{\|\tau-\tau'\|_p:\tau\neq\tau'\in T,\,\epsilon_{\tau},\epsilon_{\tau'}\neq 0\big\}\geqslant p^{-\delta(r-\varrho)}\quad\Rightarrow\quad\langle f_{T,\bm{\epsilon}},|T|,p^r\rangle=\emptyset. \]
\end{proposition}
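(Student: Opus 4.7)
The plan is to argue by contradiction. Fix $a \in \langle f_{T,\bm{\epsilon}}, |T|, p^r\rangle$ and restrict attention to the active shifts $S = \{\tau \in T : \epsilon_\tau \ne 0\}$, writing $s = |S| \leqslant |T| \leqslant M_0$. Set $w_\tau = ((a-\tau)b_0)_{1/2}$, $c_\tau = \epsilon_\tau w_\tau$, and $u_\tau = w_\tau^{-2} = ((a-\tau)b_0)^{-1}$, each a unit in $\mathbb{Z}/p^n\mathbb{Z}$. Then \eqref{derivatives} rewrites the $p^r$-singularity conditions $f_{T,\bm{\epsilon}}^{(j)}(a) \equiv 0 \pmod{p^r}$ for $1 \leqslant j \leqslant s$ as the linear system
\[ \sum_{\tau \in S} c_\tau u_\tau^{\,j} \equiv 0 \pmod{p^{r - C}}, \qquad j = 1, \ldots, s, \]
with coefficient matrix $M_{j\tau} = u_\tau^{\,j}$; here $C = C(M_0)$ absorbs $\max_{1 \leqslant j \leqslant M_0} |\ord_p\binom{1/2}{j}|$, which is bounded since $1/2 \in \mathbb{Z}_p$ for $p$ odd.

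The crucial step is to bound $\ord_p(\det M)$. Factoring $u_\tau$ from column $\tau$ produces a Vandermonde, giving $\det M = \prod_{\tau \in S} u_\tau \cdot \prod_{\tau < \tau' \in S}(u_{\tau'} - u_\tau)$ up to sign. The identity
\[ u_{\tau'} - u_\tau = \frac{\tau - \tau'}{(a-\tau)(a-\tau')b_0} \]
yields $\ord_p(u_{\tau'} - u_\tau) = \ord_p(\tau - \tau')$, so the separation hypothesis $\ord_p(\tau - \tau') \leqslant \delta(r - \varrho)$ gives $\ord_p(\det M) \leqslant \binom{s}{2}\delta(r - \varrho)$. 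The cofactors of $M$ are signed minors, hence polynomials in the units $u_\tau$ with integer coefficients, and therefore $p$-integral, so Cramer's rule $\det M \cdot c_\tau = (\mathrm{adj}\,M \cdot y)_\tau$ produces
\[ \ord_p(c_\tau) \geqslant (r - C) - \ord_p(\det M) \geqslant r\bigl(1 - \tbinom{s}{2}\delta\bigr) + \tbinom{s}{2}\delta\varrho - C. \]

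Since $w_\tau$ is a unit, $\ord_p(c_\tau) = \ord_p(\epsilon_\tau) \leqslant \log_p|\epsilon_\tau| \leqslant \log_p M_1$. I would close by choosing $\delta = \delta(M_0)$ so small that $1 - \binom{s}{2}\delta \geqslant 1/2$ for every $s \leqslant M_0$ (e.g., $\delta = 1/M_0^2$), and then $\varrho$ large enough that $\binom{s}{2}\delta \cdot \varrho \geqslant C + \log_p M_1 + 1$; this forces the lower bound on $\ord_p(c_\tau)$ to strictly exceed $\log_p M_1$ for every $r \geqslant 1$ and every $s \geqslant 2$, a contradiction. The degenerate case $s \leqslant 1$, where the separation hypothesis on pairs is vacuous, is handled directly: $f_{T,\bm{\epsilon}}^{(1)}(a) = (\epsilon_\tau/2) w_\tau^{-1}$ has $\ord_p$ equal to $\ord_p(\epsilon_\tau) \leqslant \log_p M_1$, so it cannot vanish modulo $p^r$ once $r > \log_p M_1$. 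The main obstacle will be the careful accounting of $\ord_p\binom{1/2}{j}$ for $j$ up to $M_0$ and of the Vandermonde determinant's valuation, both of which must be tracked uniformly in $p$, $n$, and $r$ to yield a contradiction that survives the $n\to\infty$ limit.
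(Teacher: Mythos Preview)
Your proposal is correct and follows essentially the same route as the paper: write out the $p^r$-singularity conditions as a linear system with Vandermonde coefficient matrix, invoke Cramer's rule (equivalently, the adjugate identity) in $\mathbb{Z}/p^{r-\varrho}\mathbb{Z}$, and read off a lower bound on $\ord_p(\tau-\tau')$ from the factorization of the determinant. The paper argues by direct contraposition and works with the full index set $T$, normalizing the vector by $p^{-\varrho_2}$ so that one coordinate is a unit; this yields the slightly sharper $\delta=\binom{M_0}{2}^{-1}$. Your restriction to the active shifts $S$ and separate treatment of $s\leqslant 1$ is a clean alternative bookkeeping. Two minor remarks: in your identity for $u_{\tau'}-u_\tau$ the denominator should carry $b_0^2$ rather than $b_0$, which is harmless since $b_0$ is a unit; and your closing worry about uniformity in $p$ is moot, since $p$ is fixed throughout the paper and all constants (including $C$ and $\varrho$) are allowed to depend on it.
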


Theorem~\ref{sums-of-products-theorem} follows directly from Propositions~\ref{multiple-stationary-phase} and \ref{deep-singular}. Proposition~\ref{multiple-stationary-phase} iterates the method of stationary phase (Lemma~\ref{statphase-lemma}) to show that all terms in the complete exponential sums with phase $f_{T,\bm{\epsilon}}$ exhibit power cancellation except possibly for the highly singular terms satisfying an increasing number of congruences to a sizable ower of $p$. Proposition~\ref{deep-singular} uses a completely different argument to show that the latter cannot happen with $|T|$ or more conditions, unless the shifts $\tau\in T$ in \eqref{f-T-eps} themselves collude to a sizable power of $p$.

We note that the proof of Proposition~\ref{multiple-stationary-phase} shows that $\delta_1(J)=\delta_2(J)=\frac1{2^J}$ is allowable. The proof of Proposition~\ref{deep-singular} shows that $\delta(M_0)=\tbinom{M_0}2^{-1}$ is allowable and gives an explicit (fairly mild) dependence of $\varrho=\varrho(M_1)$, cf.~\eqref{varrhos-def}.

\begin{proof}[Proof of Proposition~\ref{multiple-stationary-phase}]
We begin by noting that we may assume that $n$ is sufficiently large, since otherwise the conclusion is vacuous. Next, we note that \eqref{sqroot-diff} implies that the system of functions $f_{T,\epsilon}^{(j)}:(\mathbb{Z}/p^n\mathbb{Z})^{[T]}\to\mathbb{Z}/p^n\mathbb{Z}$ defined in \eqref{derivatives} satisfies \eqref{diffble-cond} in the form
\[ f_{T,\bm{\epsilon}}^{(j)}(x+p^{\kappa}t)\equiv f_{T,\bm{\epsilon}}^{(j)}(x)+f_{T,\bm{\epsilon}}^{(j+1)}(x)\cdot p^{\kappa}t\pmod{p^{\min(2\kappa,n)}} \]
for $x\in (\mathbb{Z}/p^n\mathbb{Z})^{[T]}$, $\kappa\geqslant 1$, and $t\in\mathbb{Z}/p^n\mathbb{Z}$. In particular, for every $J,r\in\mathbb{N}$, the set $\langle f_{T,\bm{\epsilon}},J,p^r\rangle$ is invariant under translations by $p^r\mathbb{Z}/p^n\mathbb{Z}$.

We prove Proposition~\ref{multiple-stationary-phase} by induction on $J$. For $J=1$, this follows immediately (with $\delta_2=\frac12$, and no error term, so $\delta_1=1$) from the stationary phase Lemma~\ref{statphase-lemma}. Suppose the claim is proved from some $J\in\mathbb{N}$, and denote for brevity $r=\lfloor\delta_2n\rfloor$ and $X\langle J,p^r\rangle=X\cap\langle f_{T,\bm{\epsilon}},J,p^{\lfloor\delta_2n\rfloor}\rangle$. In particular, we may assume that $r$ is sufficiently large. Fixing $r'=\lfloor\frac12(r-1)\rfloor$, we have a disjoint union
\begin{align}
\label{X-decomposition}
X\langle J,p^r\rangle&=X\langle J,p^r\rangle^{\flat}\sqcup X\langle J,p^r\rangle^{\sharp},\\
\nonumber X\langle J,p^r\rangle^{\flat}&=\big\{a\in X\langle J,p^r\rangle:p^{\rho}\exmid f_{T,\bm{\epsilon}}^{(J+1)}(a)\text{ for some }0\leqslant\rho\leqslant r'\big\},\\
\nonumber X\langle J,p^r\rangle^{\sharp}&=\big\{a\in X\langle J,p^r\rangle:p^{r'+1}\mid f_{T,\bm{\epsilon}}^{(J+1)}(a)\big\}.
\end{align}
The idea is that the set $X\langle J,p^r\rangle^{\flat}$ collects points $a\in X\langle J,p^r\rangle$ where we have some control on the order of possible singularity of $a$ as a root of $f_{T,\bm{\epsilon}}^{(J)}(x)\equiv 0\pmod{p^r}$, putting us in position to use the possibly singular Hensel's Lemma~\ref{hensel-lemma} to control the number of such~$a$. The remaining set $X\langle J,p^r\rangle^{\flat}$ consists of the more stubbornly deeply singular points $a\in X\langle J,p^r\rangle$; we do not get to control their number, but we collect more information about them.

Specifically, applying Lemma~\ref{hensel-lemma} to the congruence
\[ f_{T,\bm{\epsilon}}^{(J)}(x)\equiv 0\pmod{p^{2\rho+1}} \]
for each $0\leqslant\rho\leqslant r'$ separately, we find that the set $X\langle J,p^r\rangle^{\flat}$ is invariant under translations by $p^r\mathbb{Z}$, and the congruence $f_{T,\bm{\epsilon}}^{(J)}(x)\equiv 0\pmod{p^r}$ has at most $\sum_{\rho=0}^{r'}p^{\rho+1}\ll p^{r'}$ solutions (modulo $p^r$) in $X\langle J,p^r\rangle^{\flat}/p^r\mathbb{Z}$. In particular,
\begin{equation}
\label{flat-estimate}
\big|X\langle J,p^r\rangle^{\flat}\big|\ll p^{n-r/2}.
\end{equation}

On the other hand, of course,
\begin{equation}
\label{sharp-inclusion}
X\langle J,p^r\rangle^{\sharp}\subseteq X\langle J+1,p^{r'}\rangle.
\end{equation}
The inductive step claim for $J+1$ follows from \eqref{X-decomposition}, \eqref{flat-estimate}, and \eqref{sharp-inclusion}.
\end{proof}

\begin{proof}[Proof of Proposition~\ref{deep-singular}]
We prove the proposition by contraposition. Using \eqref{derivatives}, the assumption that $a\in\langle f_{T,\bm{\epsilon}},|T|,p^r\rangle$ may be explicitly written as
\begin{equation}
\label{hidden-matrix}
\sum_{\tau\in T}\tbinom{1/2}j((a-\tau)b_0)^{-(j-1)}\cdot \epsilon_{\tau}((a-\tau)b_0)^{-1}_{1/2}\equiv 0\pmod{p^r} \qquad(1\leqslant j\leqslant |T|).
\end{equation}

Let
\begin{equation}
\label{varrhos-def}
\varrho_1=\max_{1\leqslant j\leqslant |T|}\ord_p\tbinom{1/2}j,\quad \varrho_2=\min_{\tau\in T}\ord_p\epsilon_{\tau}, \quad\varrho=\varrho_1+\varrho_2;
\end{equation}
in particular, it is clear that $\varrho_1$ depends on $|T|$ only and $\varrho_2$ depends on $\|\bm{\epsilon}
\|_{\infty}$ only. Denoting by $A\in(\mathbb{Z}/p^{r-\varrho}\mathbb{Z})^{|T|\times|T|}$ and $\mathbf{v}\in(\mathbb{Z}/p^{r-\varrho_2}\mathbb{Z})^{|T|}$ the matrix and vector given by
\[ A=\Big(((a-\tau)b_0)^{-(j-1)}\Big)_{\tau\in T,\,1\leqslant j\leqslant |T|},\quad \mathbf{v}=\Big(p^{-\varrho_2}\epsilon_{\tau}((a-\tau)b_0)^{-1}_{1/2}\Big)_{\tau\in T}, \]
the condition \eqref{hidden-matrix} implies that
\[ A\mathbf{v}\equiv 0\pmod {p^{r-\varrho}}. \]

Our definition \eqref{varrhos-def} is constructed carefully so that the vector  $\mathbf{v}\in(\mathbb{Z}/p^{n-\varrho_2}\mathbb{Z})^{|T|}$ has at least one unit coordinate $v_{\tau_0}\in(\mathbb{Z}/p^{n-\varrho_2}\mathbb{Z})^{\times}$. Using Gaussian elimination as in the proof of Cramer's rule, which we emphasize can be performed in the ring $\mathbb{Z}/p^{r-\varrho}\mathbb{Z}$ without division by non-units, from this we can conclude that
\[ \det(A)\cdot v_{\tau_0}\equiv 0\pmod{p^{r-\varrho}}, \]
and hence $p^{r-\varrho}\mid\det A$. But the matrix $A$ is a Vandermonde matrix, and so this condition implies that
\[ p^{r-\varrho}\mid\det A\in(\mathbb{Z}/p^{r-\varrho}\mathbb{Z})^{\times}\prod_{\{\tau\neq\tau'\}\subseteq T}(\tau-\tau'), \]
from which we can conclude that, for some $\tau\neq\tau'\in T$,
\[ p^{\lceil \binom{|T|}2^{-1}(r-\varrho)\rceil}\mid(\tau-\tau'). \]
The claim of the proposition follows.
\end{proof}

\begin{remark}
\label{more-general-remark}
The method of this section works in much more generality. It is not hard to see, for example, that it immediately applies to complete sums of arithmetic functions $F:X\to\mathbb{C}$, $X\subseteq\mathbb{Z}/p^n\mathbb{Z}$ satisfying for $\kappa\geqslant\lceil n/2\rceil$ a condition of the form
\[ F(a+p^{\kappa}t)=F(a)e\left(\frac{p^{\kappa}t\cdot f_1(a)}{p^n}\right), \]
where $f_1(a)$ is a linear combination of fixed branches of a set $T$ of power functions (defined on $a_1+p\mathbb{Z}/p^n\mathbb{Z}$ by the power series $s(a_1)(1+p\bar{a}_1x)^{\alpha}$ modulo $p^n$, $\alpha\not\in\{0,1,\dots,|T|-1\}$, with a suitable choice function $s$ similarly to \S\ref{sqroot-section}). For example, via Postnikov's formula (see~\cite[Lemma~13]{Milicevic2016}), $F$ could be the product of finitely many linear shifts of Dirichlet characters modulo $p^n$ or combinations thereof with Kloosterman sums or other summands that themselves arise as sums of certain complete exponential sums modulo $p^n$.
\end{remark}

\section{Convergence in law}
\label{in-law-sec}
In this section, we prove all of our principal statements about convergence in law of random variables induced by Kloosterman paths. Subsections \S\ref{tightness-thm1-subsec} and \ref{proof-of-main} are devoted to the convergence in law statement of Theorem~\ref{main}. We deduce this from convergence in the sense of finite distributions by using Prokhorov's criterion (Proposition~\ref{prokhorov}), which requires tightness and which we in turn verify using Kolmogorov's criterion (Proposition~\ref{kolmogorov}). We follow a similar strategy in \S\ref{rearranged-paths-subsec}, where we prove Theorem~\ref{rearranged-paths-thm} on the limiting distribution of rearranged Kloosterman paths, and in \S\ref{large-p-section}, where in Theorem~\ref{large-p-theorem} we prove the convergence in law of $\Kl^{\bullet}(\cdot;p)\to\Kl$ as $p\to\infty$.

\subsection{Tightness}
\label{tightness-thm1-subsec}
The goal of this section is to prove the following statement, which is a key ingredient in verifying the tightness criterion for convergence in law in Theorem~\ref{main}.
\begin{proposition}
\label{tightness-prop}
Let $p$ be a fixed odd prime, and let $a_1,b_0\in(\mathbb{Z}/p\mathbb{Z})^{\times}$. For every even integer $\alpha\geqslant 4$, there exists a $\beta=\beta(\alpha)>0$ such that, for every $0\leqslant s,t\leqslant 1$,
\[ \frac1{p^{n-1}}\sum_{a\in(\mathbb{Z}/p^n\mathbb{Z})^{\times}_{a_1}}\big|\Kl_{p^n}(t;(a,b_0))-\Kl_{p^n}(s;(a,b_0))\big|^{\alpha}\ll_{\alpha} |t-s|^{1+\beta}. \]
\end{proposition}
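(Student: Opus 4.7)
I would verify the Kolmogorov criterion (Proposition~\ref{kolmogorov}) for the $\alpha$-th moment of the increments by running the same completion-and-moments machinery that drives the analysis of the complex moments in \S\ref{core-sec}--\ref{error-terms-sec}. First, using \eqref{tilde-to-no-tilde}, replace $\Kl_{p^n}$ by the renormalized paths $\widetilde{\Kl_{p^n}}$ at negligible cost. Applying Lemma~\ref{completion-lemma} at both $s$ and $t$ then expresses
\[ \widetilde{\Kl_{p^n}}(t;(a,b_0)) - \widetilde{\Kl_{p^n}}(s;(a,b_0)) = \sum_h \gamma_h(s,t)\,\Kl_{p^n}(a-h,b_0), \]
where the coefficients $\gamma_h(s,t) = (\alpha_{p^n}(h;t)-\alpha_{p^n}(h;s))/p^{n/2}$ satisfy $|\gamma_h(s,t)|\ll\min(|t-s|,|h|^{-1})+O(p^{-n})$ from \eqref{completion-eq2} and, crucially, $\sum_h|\gamma_h(s,t)|^2 \ll |t-s|$ by Parseval applied to the indicator of the relevant arc.

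Next I would expand $|\cdot|^{\alpha}$ for even $\alpha$ as a binomial sum of mixed moments $\mathcal{M}_{p^n}((t,s);\bm{m},\bm{n};b_0)$ and apply Proposition~\ref{complex-moments-prop} term by term. This reduces the left-hand side of the proposition to
\[ \mathbb{E}\big|\Kl(t;p;(a_1,b_0)) - \Kl(s;p;(a_1,b_0))\big|^{\alpha} + O_{\alpha}(p^{-\delta n}). \]
The limit random variable is the sum of independent bounded variables $\gamma_h(s,t)U_h^{\sharp}$ (with $|U_h^{\sharp}|\leqslant 2$ deterministically and mean zero), so by a direct sub-Gaussian (Hoeffding) or Khintchine--Kahane moment comparison its $\alpha$-th moment is
\[ \ll_{\alpha}\Big(\sum_h|\gamma_h(s,t)|^2\Big)^{\alpha/2}\ll_{\alpha}|t-s|^{\alpha/2}. \]
For $\alpha\geqslant 4$ this already matches the required $|t-s|^{1+\beta}$ upon choosing any $\beta\leqslant\alpha/2-1$.

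It remains to absorb the additive error $p^{-\delta n}$, which may exceed $|t-s|^{1+\beta}$ for very small $|t-s|$ (or small $n$). For this, invoke the trivial Lipschitz estimate: $\widetilde{\Kl_{p^n}}(t;(a,b_0))$ is piecewise linear with slope $\varphi(p^n)p^{-n/2}\ll p^{n/2}$, so
\[ \frac{1}{p^{n-1}}\sum_a\big|\widetilde{\Kl_{p^n}}(t;(a,b_0))-\widetilde{\Kl_{p^n}}(s;(a,b_0))\big|^{\alpha}\ll p^{n\alpha/2}|t-s|^{\alpha}, \]
which is $\ll|t-s|^{\alpha/2}\leqslant|t-s|^{1+\beta}$ whenever $|t-s|\leqslant p^{-n}$, and the contributions from any remaining small values of $n$ can be absorbed into the $p$-dependent implicit constant allowed by the notational convention.

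The delicate technical point I anticipate is bridging the potential intermediate regime $|t-s|\in(p^{-n},p^{-\delta n/(1+\beta)})$ when the exponent $\delta=\delta(\alpha)$ arising from Theorem~\ref{kloosterman-moments} is small relative to $1+\beta$. Closing this gap will likely require a refined decomposition into $p$-adically close versus well-separated frequency tuples along the lines of the proof of Proposition~\ref{error-term-estimate-proposition}, together with an explicit choice of $\beta=\beta(\alpha)>0$ calibrated to balance the two regimes.
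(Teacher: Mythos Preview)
Your plan correctly reproduces the two easy regimes --- the ``long'' range $|t-s|\geqslant p^{-\lambda n}$ (this is essentially how the paper proves Lemma~\ref{tight-lemma3}) and the ``trivially short'' range $|t-s|\leqslant 1/(\varphi(p^n)-1)$ (Lemma~\ref{tight-lemma1}). You have also correctly spotted that the intermediate regime is the crux, but your proposed fix for it will not work and misses the actual mechanism.

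The error term $\mathrm{O}_{\alpha}(p^{-\delta n})$ coming out of Theorem~\ref{kloosterman-moments}/Theorem~\ref{sums-of-products-theorem} has $\delta=\delta(\alpha)$ that is \emph{exponentially small} in $\alpha$ (the proof of Proposition~\ref{multiple-stationary-phase} gives $\delta\asymp 2^{-|T|}$ with $|T|\leqslant\alpha$), while $1+\beta=\alpha/2$. No amount of ``refined decomposition'' of the frequency tuples in the style of Proposition~\ref{error-term-estimate-proposition} will shrink the threshold $p^{-\delta n/(1+\beta)}$ down to $p^{-n}$; the singular-stationary-point machinery simply does not see increments at that scale. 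The paper instead brings in a genuinely new analytic input for this middle range (Lemma~\ref{tight-lemma2}): a pointwise bound on the \emph{short} incomplete Kloosterman sum
\[
\widetilde{\Kl_{p^n}}(t;(a,b_0))-\widetilde{\Kl_{p^n}}(s;(a,b_0))=\frac1{p^{n/2}}\sum_{x\in\mathcal{I}_{s,t}}e_{p^n}(ax+b_0\bar{x}),\qquad |\mathcal{I}_{s,t}|\ll p^n|t-s|,
\]
obtained via $p$-adic exponent pairs (Proposition~\ref{short-ish-sums}, from \cite{Milicevic2016}). This pointwise saving is then combined with the elementary $L^2$ mean-value estimate (your Parseval observation, used here as in \eqref{central-eq0}) by writing $|\cdot|^{\alpha}=|\cdot|^2\cdot|\cdot|^{\alpha-2}$; the $L^2$ factor supplies the exponent $1$ in $|t-s|^{1+b(\alpha-2)}$ and the pointwise short-sum bound supplies the extra $b(\alpha-2)$. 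This hybrid $L^2\times L^{\infty}$ argument is the idea you are missing, and it cannot be replaced by further squeezing the moment error term.
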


Using Kolmogorov's criterion (Proposition~\ref{kolmogorov}), this immediately implies the following.
\begin{corollary}
\label{tight-corollary}
The sequence of $C^0([0,1],\mathbb{C})$-valued random variables $\Kl_{p^n}(\cdot;(a_1,b_0))$ defined in  \eqref{kl-pn-a1-def} is tight as $n\to\infty$.
\end{corollary}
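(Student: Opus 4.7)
The plan is to deduce this corollary as an essentially immediate consequence of Proposition~\ref{tightness-prop} via Kolmogorov's criterion (Proposition~\ref{kolmogorov}). All the genuinely hard analysis has been pushed into the moment estimate of Proposition~\ref{tightness-prop}, so the corollary itself amounts to verifying that its hypothesis matches the Kolmogorov hypothesis on the nose.

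First I would identify the underlying probability structure: $\Kl_{p^n}(\cdot;(a_1,b_0))$ is, by definition \eqref{kl-pn-a1-def}, a $C^0([0,1],\mathbb{C})$-valued random variable on the finite probability space $(\mathbb{Z}/p^n\mathbb{Z})^{\times}_{a_1}$ with the uniform measure, which has cardinality $p^{n-1}$. Hence for any Borel function $\Phi$ on $C^0([0,1],\mathbb{C})$, the expectation $\mathbb{E}(\Phi(\Kl_{p^n}(\cdot;(a_1,b_0))))$ equals $p^{-(n-1)}\sum_{a \in (\mathbb{Z}/p^n\mathbb{Z})^{\times}_{a_1}}\Phi(\Kl_{p^n}(\cdot;(a,b_0)))$, matching exactly the normalization appearing on the left-hand side of Proposition~\ref{tightness-prop}.

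Next I would fix any even integer $\alpha \geqslant 4$ (say $\alpha = 4$ for concreteness) and apply Proposition~\ref{tightness-prop} with this $\alpha$, obtaining an exponent $\beta = \beta(\alpha) > 0$ and the bound
\[ \mathbb{E}\bigl(|\Kl_{p^n}(t;(a_1,b_0)) - \Kl_{p^n}(s;(a_1,b_0))|^{\alpha}\bigr) \ll_{\alpha} |t-s|^{1+\beta}, \]
valid uniformly in $n \geqslant 2$ and in $(s,t)\in[0,1]^2$. This is precisely the hypothesis of Kolmogorov's criterion, Proposition~\ref{kolmogorov}, taken with $\delta = \beta(\alpha) > 0$.

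Invoking Proposition~\ref{kolmogorov} then yields the tightness of the sequence $(\Kl_{p^n}(\cdot;(a_1,b_0)))_{n\geqslant 2}$ in $C^0([0,1],\mathbb{C})$, which is the content of the corollary. There is no real obstacle in this step: the entire difficulty has been isolated in Proposition~\ref{tightness-prop}, whose proof of the uniform $\alpha$-th moment increment estimate is the substantive ingredient behind this tightness statement.
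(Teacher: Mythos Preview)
Your proposal is correct and matches the paper's approach exactly: the paper simply states that Corollary~\ref{tight-corollary} follows immediately from Proposition~\ref{tightness-prop} by Kolmogorov's criterion (Proposition~\ref{kolmogorov}), and your write-up just unpacks this one-line deduction in slightly more detail.
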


Proposition~\ref{tightness-prop} follows directly from Lemmata~\ref{tight-lemma1}, \ref{tight-lemma2}, and \ref{tight-lemma3} below. A similar tightness proposition was proved in \cite[Theorem 1.2]{RicottaRoyerShparlinski2020}, for a fixed $n\geqslant 31$ and $p\to\infty$, and with a very large $\alpha$; we follow their outline but with substantial adjustments. The core of the argument in \cite{RicottaRoyerShparlinski2020} was in available estimates on incomplete Kloosterman sums of the form \eqref{incomplete-kloost} in an interval $\mathcal{I}$ of length $p^{n(1/2-\lambda)}\leqslant|\mathcal{I}|\leqslant p^{n(1/2+\lambda)}$ for some small $\lambda>0$. As will be seen in the proof of Lemma~\ref{tight-lemma3}, this range of interest is directly related to the strength of the error terms in estimates of sums of products of Kloosterman sums, which are more delicate in the $n\to\infty$ aspect (Theorem~\ref{sums-of-products-theorem}).

The exponential sum core of Proposition~\ref{tightness-prop} is the estimate \eqref{incomplete-kloost}, which shows that incomplete Kloosterman sums over intervals $\mathcal{I}$ as long as $|\mathcal{I}|\leqslant p^{(1-\lambda)n}$ for an arbitrarily small $\lambda>0$ exhibit cancellation better than P\'olya--Vinogradov strength. We deduce this estimate in Proposition~\ref{short-ish-sums} from the machinery of $p$-adic exponent pairs of \cite{Milicevic2016}. In a qualitative form, this proposition is essentially dual to \cite[Theorem 1.2]{LiuShparlinskiZhang2018} via completion or the $B$-process of \cite{Milicevic2016}.

We also sharpen the treatment of the ``middle range'', $1/(\varphi(p^n)-1)\leqslant |t-s|\leqslant p^{-\lambda n}$; our Lemma~\ref{tight-lemma2} shows (crucially for us) that in fact any $\alpha>2$ suffices in this range. We remind the reader that, as in the rest of the paper, all implicit constants are allowed to (polynomially) depend on $p$, so that the estimates are nontrivial only for sufficiently large $n$.

\begin{proposition}
\label{short-ish-sums}
For every $0<\lambda<1$, there exists a $\delta=\delta(\lambda)>0$, depending on $\lambda$ only, such that for every $n\in\mathbb{N}$ and every interval $\mathcal{I}$ of length $|{\mathcal{I}}|\leqslant p^{n(1-\lambda)}$ and every $a,b\in(\mathbb{Z}/p^n\mathbb{Z})^{\times}$,
\begin{equation}
\label{incomplete-kloost}
\frac1{p^{n/2}}\sum_{x\in\mathcal{I}}e_{p^n}(ax+b\bar{x})\ll_{\lambda} p^{-n\delta}.
\end{equation}
\end{proposition}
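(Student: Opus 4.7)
The strategy is to apply the completion method (analogous to Lemma~\ref{completion-lemma}) to the incomplete sum, turning it into a dual sum over frequencies $h$ of complete Kloosterman sums $\Kl_{p^n}(a+h,b)$; then invoke the explicit evaluation of Lemma~\ref{kloost-eval} to write each such Kloosterman sum as a pair of exponentials with $p$-adically analytic phase; and finally estimate the resulting short exponential sum over $h$ using the $p$-adic exponent pair machinery of~\cite{Milicevic2016}.

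Concretely, writing the indicator of $\mathcal{I}$ as a Fourier series modulo $p^n$ and executing geometric sums (just as in the proof of Lemma~\ref{completion-lemma}), one obtains
\[ \frac{1}{p^{n/2}}\sum_{x\in\mathcal{I},\,p\nmid x}e_{p^n}(ax+b\bar{x})=\sum_{|h|<p^n/2}\beta(h;\mathcal{I})\,\Kl_{p^n}(a+h,b), \]
with coefficients $|\beta(h;\mathcal{I})|\ll\min(|\mathcal{I}|/p^n,\,1/|h|)$. By Lemma~\ref{kloost-eval}, only frequencies $h$ with $(a+h)b\in(\mathbb{Z}/p\mathbb{Z})^{\times 2}$ contribute, and for each such $h$ the factor $\Kl_{p^n}(a+h,b)$ unfolds into two exponentials with phase $\pm 2((a+h)b)_{1/2}$. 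After a dyadic decomposition $|h|\asymp H$, the task thus reduces to bounding, for each $1\leqslant H\leqslant p^n/2$,
\[ \Sigma_H=\sum_{|h|\asymp H,\,(a+h)b\in(\mathbb{Z}/p\mathbb{Z})^{\times 2}}w_H(h)\,e_{p^n}\bigl(\pm 2((a+h)b)_{1/2}\bigr), \]
where $w_H$ is a smooth weight of size $\ll \min(|\mathcal{I}|/p^n,\,1/H)$.

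By \eqref{sqroot-diff}, the phase of $\Sigma_H$ extends to a $p$-adically analytic function of $h$ whose derivative $\pm b\,((a+h)b)_{1/2}^{-1}$ is a $p$-adic unit. Thus $\Sigma_H$ is of precisely the shape governed by the $p$-adic exponent pair framework of~\cite{Milicevic2016}: a single $B$-process (completion followed by stationary-phase analysis via Lemma~\ref{statphase-lemma}) reduces $\Sigma_H$ to a sum over isolated, non-singular stationary points that Hensel's Lemma~\ref{hensel-lemma} counts sharply, and iteration or interleaving with Weyl-differencing $A$-processes produces a whole family of bounds of shape $\Sigma_H\ll \|w_H\|_\infty\, H^{\kappa}\,(p^n/H)^{\lambda'-\kappa}$ for admissible exponent pairs $(\kappa,\lambda')$. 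Qualitatively, this is exactly the dual of~\cite[Theorem~1.2]{LiuShparlinskiZhang2018} along the $B$-process.

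Summing dyadically: in the small range $H\leqslant p^{n\eta}$ (for $\eta=\eta(\lambda)$ sufficiently small), the trivial estimate $|\Sigma_H|\ll H\,|\mathcal{I}|/p^n\ll p^{-n(\lambda-\eta)}$ already yields a power saving; in the complementary range $p^{n\eta}<H<p^n/2$, an admissible exponent pair yields a saving by a positive power of $p^n/H$ against the trivial bound, summing telescopically. Choosing $\eta$ and the exponent pair to balance these two contributions gives the desired conclusion with some $\delta=\delta(\lambda)>0$. The main obstacle is optimizing this balance uniformly across the full dyadic range and uniformly in $a,b\in(\mathbb{Z}/p^n\mathbb{Z})^\times$, while carefully respecting the restriction $(a+h)b\in(\mathbb{Z}/p\mathbb{Z})^{\times 2}$ on which the square-root phase is defined; the assumption $\lambda>0$ is essential here because the trivial small-$H$ estimate only beats $1$ when $\eta<\lambda$.
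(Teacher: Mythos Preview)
Your plan is sound and would work, but it takes a more circuitous route than the paper's proof. The paper applies the $p$-adic exponent-pair machinery of~\cite{Milicevic2016} \emph{directly} to the original $x$-sum: fixing any integer $k>1/\lambda$, the exponent pair
\[
BA^kB(0,1)=\Bigl(\tfrac{2^{k-1}-k}{2^k-2},\ \tfrac{2^{k-1}}{2^k-2}\Bigr)
\]
from~\cite[Theorem~2]{Milicevic2016} yields
\[
\sum_{x\in\mathcal{I}}e_{p^n}(ax+b\bar{x})\ll_k (p^n)^{\frac{2^{k-1}-k}{2^k-2}}\,|\mathcal{I}|^{\frac{k}{2^k-2}}(\log p^n)^{O(1)},
\]
and inserting $|\mathcal{I}|\leqslant p^{n(1-\lambda)}$ immediately gives the claimed power saving with $\delta=\tfrac{k\lambda-1-\epsilon}{2^k-2}>0$. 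Your completion-plus-Kloosterman-evaluation step is precisely the leading $B$ in the word $BA^kB$, so the two arguments are equivalent in content (indeed, the paper remarks just before the proof that the proposition is dual to~\cite[Theorem~1.2]{LiuShparlinskiZhang2018} via the $B$-process). What the paper's direct citation buys is that it sidesteps the dyadic decomposition in $H$, the removal of the non-smooth weights $w_H$ by partial summation, the splitting into residue classes $h\bmod p$ on which the square-root phase is well defined, and the selection of a suitable exponent pair for the $h$-sum in each dyadic range---all of which your route would require you to carry out explicitly, and which you rightly flag as the main remaining obstacle.
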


\begin{proof}
We fix a $k\in\mathbb{N}$ with $k>1/\lambda$, and consider the $p$-adic exponent pair given by \cite[Theorem 2]{Milicevic2016}
\[ BA^kB(0,1)=\Big(\frac{2^{k-1}-k}{2^k-2},\frac{2^{k-1}}{2^k-2}\Big) .\]
Then, according to \cite[Definition~2]{Milicevic2016}, there exists a $\delta'>0$ such that
\[ \sum_{x\in\mathcal{I}}e_{p^n}(ax+b\bar{x})\ll_k (p^n)^{\frac{2^{k-1}-k}{2^k-2}}|\mathcal{I}|^{\frac{k}{2^k-2}}(\log p^n)^{\delta'}. \]
In fact it is clear from \cite[Theorems 4 and 5]{Milicevic2016} that we can take $\delta'=\frac12$, although this is not important for us. Using the condition that $|{\mathcal{I}}|\leqslant p^{n(1-\lambda)}$ and picking any $0<\epsilon<k\lambda-1$, we conclude that
\[ \sum_{x\in\mathcal{I}}e_{p^n}(ax+b\bar{x})\ll_{k,\epsilon}(p^n)^{\frac{2^{k-1}-k+k(1-\lambda)+\epsilon}{2^k-2}}
=p^{n/2-n\delta}, \]
with $\delta=\frac{k\lambda-(1+\epsilon)}{2^k-2}>0$.
\end{proof}

\begin{lemma}[{\cite[Lemma 4.2]{RicottaRoyerShparlinski2020}}]
\label{tight-lemma1}
If $\alpha>0$ and
\[ 0\leqslant |t-s|\leqslant\frac1{\varphi(p^n)-1}, \]
then
\[ \big|\Kl_{p^n}(t;(a,b_0))-\Kl_{p^n}(s;(a,b_0))\big|^{\alpha}\leqslant 2^{\alpha}|t-s|^{\alpha/2}. \]
\end{lemma}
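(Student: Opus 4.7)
The key observation is that the map $t \mapsto \Kl_{p^n}(t;(a,b_0))$ is by construction a piecewise-linear parametrization: on each of the $\varphi(p^n)-1$ subintervals of $[0,1]$ of length $1/(\varphi(p^n)-1)$ it traces a segment $[\Kl_{j_i;p^n}(a,b_0),\Kl_{j_{i+1};p^n}(a,b_0)]$, and the chord length of each such segment is exactly $|e_{p^n}(aj_{i+1}+b_0\overline{j_{i+1}})|/p^{n/2}=1/p^{n/2}$ because each Kloosterman summand has modulus one. In particular, the parametrization has constant speed $(\varphi(p^n)-1)/p^{n/2}$ on every segment.

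Given the hypothesis $|t-s|\leqslant 1/(\varphi(p^n)-1)$, the parameters $s$ and $t$ belong either to the same segment or to two adjacent ones. In both cases the triangle inequality lets us bound $|\Kl_{p^n}(t;(a,b_0))-\Kl_{p^n}(s;(a,b_0))|$ by the total arc length traced on $[\min(s,t),\max(s,t)]$, which equals the constant speed times the parameter length:
\[ \big|\Kl_{p^n}(t;(a,b_0))-\Kl_{p^n}(s;(a,b_0))\big|\leqslant \frac{\varphi(p^n)-1}{p^{n/2}}\cdot |t-s|. \]

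To convert this Lipschitz-type bound into the desired H\"older-$\tfrac12$ bound, I factor one half-power of $|t-s|$ out and use the hypothesis on the other one: since $|t-s|^{1/2}\leqslant (\varphi(p^n)-1)^{-1/2}$, the prefactor satisfies
\[ \frac{\varphi(p^n)-1}{p^{n/2}}\cdot|t-s|^{1/2}\leqslant \frac{(\varphi(p^n)-1)^{1/2}}{p^{n/2}}\leqslant 1, \]
the last inequality being simply $\varphi(p^n)\leqslant p^n$. Therefore $|\Kl_{p^n}(t;(a,b_0))-\Kl_{p^n}(s;(a,b_0))|\leqslant |t-s|^{1/2}$, and raising both sides to the $\alpha$-th power gives the stated conclusion (with constant even sharper than $2^{\alpha}$). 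There is no substantive obstacle here; the whole argument is a direct geometric consequence of the piecewise-linear construction of the path and the normalization factor $p^{-n/2}$, which together make the trivial Lipschitz bound already strong enough to yield H\"older-$\tfrac12$ regularity on the scale of a single segment.
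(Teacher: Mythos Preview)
Your argument is correct and matches the approach the paper indicates: the lemma is imported from \cite{RicottaRoyerShparlinski2020} with the remark that it ``involves no arithmetic estimates and holds as an absolute inequality,'' and your proof makes this explicit via the constant-speed piecewise-linear structure of the path. Your bound in fact yields the sharper constant $1$ in place of $2$, which is harmless here.
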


\begin{lemma}[{\cite[Lemma 4.3]{RicottaRoyerShparlinski2020}}]
\label{tight-lemma0}
If $\alpha\geqslant 1$ and
\[ |t-s|\geqslant \frac1{\varphi(p^n)-1}, \]
then
\[ \big|\Kl_{p^n}(t;(a,b_0))-\Kl_{p^n}(s;(a,b_0))\big|^{\alpha}\ll_{\alpha}|t-s|^{\alpha/2}+
\big|\widetilde{\Kl_{p^n}}(t;(a,b_0))-\widetilde{\Kl_{p^n}}(s;(a,b_0))\big|^{\alpha}. \]
\end{lemma}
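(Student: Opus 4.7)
The plan is to reduce from $\Kl_{p^n}$ to $\widetilde{\Kl_{p^n}}$ via the uniform pointwise bound \eqref{tilde-to-no-tilde}, which gives
\[ \big|\Kl_{p^n}(t;(a,b_0))-\widetilde{\Kl_{p^n}}(t;(a,b_0))\big|\leqslant \frac{p}{p^{n/2}} \]
for every $t\in[0,1]$ and $a\in(\mathbb{Z}/p^n\mathbb{Z})^{\times}$, and an analogous bound at $s$. Applying the triangle inequality to the chain
\[ \Kl_{p^n}(t)-\Kl_{p^n}(s)=\bigl(\Kl_{p^n}(t)-\widetilde{\Kl_{p^n}}(t)\bigr)+\bigl(\widetilde{\Kl_{p^n}}(t)-\widetilde{\Kl_{p^n}}(s)\bigr)+\bigl(\widetilde{\Kl_{p^n}}(s)-\Kl_{p^n}(s)\bigr) \]
(with the common arguments $(a,b_0)$ suppressed) yields
\[ \big|\Kl_{p^n}(t;(a,b_0))-\Kl_{p^n}(s;(a,b_0))\big|\leqslant \frac{2p}{p^{n/2}}+\big|\widetilde{\Kl_{p^n}}(t;(a,b_0))-\widetilde{\Kl_{p^n}}(s;(a,b_0))\big|. \]

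Next, I convert the error $p/p^{n/2}$ into the target scale $|t-s|^{1/2}$ using the hypothesis. Since $|t-s|\geqslant 1/(\varphi(p^n)-1)$ and $\varphi(p^n)=p^n(1-1/p)$, we have
\[ \frac{p}{p^{n/2}}\ll p^{-n/2}\ll\bigl(\varphi(p^n)-1\bigr)^{-1/2}\leqslant |t-s|^{1/2}, \]
where all implied constants depend only on $p$, as elsewhere in the paper. Hence
\[ \big|\Kl_{p^n}(t;(a,b_0))-\Kl_{p^n}(s;(a,b_0))\big|\ll|t-s|^{1/2}+\big|\widetilde{\Kl_{p^n}}(t;(a,b_0))-\widetilde{\Kl_{p^n}}(s;(a,b_0))\big|. \]

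Finally, I raise both sides to the power $\alpha\geqslant 1$ and use the elementary inequality $(A+B)^{\alpha}\leqslant 2^{\alpha-1}(A^{\alpha}+B^{\alpha})$ for nonnegative reals, which gives
\[ \big|\Kl_{p^n}(t;(a,b_0))-\Kl_{p^n}(s;(a,b_0))\big|^{\alpha}\ll_{\alpha}|t-s|^{\alpha/2}+\big|\widetilde{\Kl_{p^n}}(t;(a,b_0))-\widetilde{\Kl_{p^n}}(s;(a,b_0))\big|^{\alpha}, \]
as desired. There is no genuine obstacle here: the proof is a direct consequence of the uniform approximation \eqref{tilde-to-no-tilde} and the fact that in the regime $|t-s|\geqslant 1/(\varphi(p^n)-1)$ this approximation error is dominated by $|t-s|^{1/2}$, so that passing from $\Kl_{p^n}$ to the completion-friendly $\widetilde{\Kl_{p^n}}$ costs nothing at this scale. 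The real work of proving Proposition~\ref{tightness-prop} will instead be in Lemma~\ref{tight-lemma3} (and its short-sum input from Proposition~\ref{short-ish-sums}), where the difference of modified paths must be controlled.
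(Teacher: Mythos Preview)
Your proof is correct and follows the same approach as the paper, which explicitly imports the argument from \cite[Lemma 4.3]{RicottaRoyerShparlinski2020} while substituting the approximation \eqref{tilde-to-no-tilde} at the first step. Both proofs use the triangle inequality together with \eqref{tilde-to-no-tilde}, the hypothesis $|t-s|\geqslant 1/(\varphi(p^n)-1)$ to absorb the $p^{-n/2}$ error into $|t-s|^{1/2}$, and convexity to pass to the $\alpha$-th power.
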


We import Lemmata~\ref{tight-lemma1} and \ref{tight-lemma0} directly from \cite{RicottaRoyerShparlinski2020} with the same proofs, noting that each holds without an average over $a$, and that, although the statements there ask for $n\geqslant 2$ to be fixed, this is never used in the proofs. Indeed, in this very short regime for $|t-s|$, Lemma~\ref{tight-lemma1} involves no arithmetic estimates and holds as an absolute inequality; the proof of Lemma~\ref{tight-lemma0} also holds essentially verbatim, only substituting \eqref{tilde-to-no-tilde} in place of \cite[(5)]{RicottaRoyerShparlinski2020} at the first step.

\begin{lemma}
\label{tight-lemma2}
For every $0<\lambda<1$, there exists a  $b=b(\lambda)>0$ such that, for every $\alpha\geqslant 2$ and $n\in\mathbb{N}$, if
\[ \frac1{\varphi(p^n)-1}\leqslant |t-s|\leqslant p^{-n\lambda}, \]
then
\[ \frac1{p^{n-1}}\sum_{a\in(\mathbb{Z}/p^n\mathbb{Z})^{\times}_{a_1}}\big|\Kl_{p^n}(t;(a,b_0))-\Kl_{p^n}(s;(a,b_0))\big|^{\alpha}\ll_{\alpha} |t-s|^{1+b(\alpha-2)}. \]
\end{lemma}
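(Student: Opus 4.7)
\medskip

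\noindent\textbf{Proof proposal.} The plan is to pass from the original Kloosterman paths to the smoothed paths $\widetilde{\Kl_{p^n}}$ via Lemma~\ref{tight-lemma0}, and then to interpolate between a uniform pointwise bound on the increment $\widetilde{\Kl_{p^n}}(t;(a,b_0))-\widetilde{\Kl_{p^n}}(s;(a,b_0))$ (coming from Proposition~\ref{short-ish-sums}) and a clean mean-square computation. Concretely, Lemma~\ref{tight-lemma0} reduces matters to estimating the $\alpha$-th moment of $X(a):=\widetilde{\Kl_{p^n}}(t;(a,b_0))-\widetilde{\Kl_{p^n}}(s;(a,b_0))$, since the remainder $|t-s|^{\alpha/2}$ is dominated by $|t-s|^{1+b(\alpha-2)}$ as soon as $b\leqslant\tfrac12$. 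By the definition \eqref{modified-path}, $X(a)$ equals $p^{-n/2}\sum_{x\in\mathcal{I}}^{\times} e_{p^n}(ax+b_0\bar x)$ over an interval $\mathcal{I}\subseteq[1,p^n]$ of length $\ell\ll \varphi(p^n)|t-s|\leqslant p^n|t-s|$.

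First I would establish a uniform pointwise bound. By hypothesis $\ell\leqslant p^{n(1-\lambda)}$, so Proposition~\ref{short-ish-sums} applied to $\mathcal{I}$ yields $|X(a)|\ll_\lambda p^{-n\delta}$ with $\delta=\delta(\lambda)>0$, uniformly in $a\in(\mathbb{Z}/p^n\mathbb{Z})^\times_{a_1}$. Rewriting $p^{-n}\leqslant |t-s|^{1/\lambda}$ from $|t-s|\leqslant p^{-n\lambda}$ gives
\[
\|X\|_\infty\ll_\lambda |t-s|^{\delta/\lambda}.
\]

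Next I would compute the second moment by orthogonality. Expanding the square and writing $a=a_1+pm$ with $m\bmod p^{n-1}$,
\[
\frac{1}{p^{n-1}}\sum_{a\in(\mathbb{Z}/p^n\mathbb{Z})^\times_{a_1}}|X(a)|^2
=\frac{1}{p^n}\sum_{\substack{x,y\in\mathcal{I}^{\times}\\ p^{n-1}\mid x-y}}
e_{p^n}\!\big(a_1(x-y)+b_0(\bar x-\bar y)\big),
\]
since the $m$-average forces $p^{n-1}\mid x-y$. For $n$ sufficiently large (depending on $\lambda$) one has $\ell\leqslant p^{n(1-\lambda)}<p^{n-1}$, so the only surviving pairs are the diagonal $x=y$, contributing $\ell/p^n\ll|t-s|$. (For the finitely many small $n$ not covered, the claimed bound is trivial up to a $p$-dependent implicit constant.)

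Finally I would interpolate: for any even $\alpha\geqslant 2$,
\[
\frac{1}{p^{n-1}}\sum_{a}|X(a)|^\alpha\leqslant \|X\|_\infty^{\alpha-2}\cdot\frac{1}{p^{n-1}}\sum_{a}|X(a)|^2
\ll_{\alpha,\lambda}|t-s|^{(\alpha-2)\delta/\lambda}\cdot|t-s|=|t-s|^{1+b(\alpha-2)},
\]
with $b=b(\lambda):=\min(\delta(\lambda)/\lambda,\,\tfrac12)$, which is independent of $\alpha$ as required. Combined with Lemma~\ref{tight-lemma0}, this gives the desired estimate. The only genuinely non-routine step is the pointwise bound: it is precisely where we invoke the refined incomplete-Kloosterman cancellation of Proposition~\ref{short-ish-sums} (rather than P\'olya--Vinogradov strength), which is what allows us to treat any $\alpha\geqslant 2$ and to obtain a power saving $b(\alpha-2)$ above the baseline exponent $1$.
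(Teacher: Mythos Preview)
Your overall strategy---reduce to $\widetilde{\Kl_{p^n}}$ via Lemma~\ref{tight-lemma0}, get a uniform pointwise bound on the increment from Proposition~\ref{short-ish-sums}, compute the second moment by orthogonality, and interpolate---is exactly the approach of the paper. However, there is a genuine slip in the pointwise step.

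You write ``Rewriting $p^{-n}\leqslant |t-s|^{1/\lambda}$ from $|t-s|\leqslant p^{-n\lambda}$.'' The implication goes the other way: from $|t-s|\leqslant p^{-n\lambda}$ one gets $|t-s|^{1/\lambda}\leqslant p^{-n}$, i.e.\ $p^{-n}\geqslant |t-s|^{1/\lambda}$. Consequently the claimed bound $\|X\|_\infty\ll_\lambda |t-s|^{\delta/\lambda}$ is false near the bottom of the range (where $|t-s|\asymp p^{-n}$, so $|t-s|^{\delta/\lambda}\asymp p^{-n\delta/\lambda}\ll p^{-n\delta}$, the wrong direction). The fix is immediate: use the \emph{lower} bound $|t-s|\geqslant 1/(\varphi(p^n)-1)>p^{-n}$, which gives $p^{-n\delta}\ll |t-s|^{\delta}$ and hence $\|X\|_\infty\ll_\lambda |t-s|^{\delta}$. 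Your interpolation then yields the conclusion with $b=\min(\delta(\lambda),\tfrac12)$.

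With that correction, your argument is in fact slightly cleaner than the paper's, which splits into the sub-ranges $|t-s|\lessgtr p^{-2n/3}$ (trivial bound in the first, Proposition~\ref{short-ish-sums} in the second) to arrive at $b=\min(\tfrac14,\tfrac32\delta(\lambda))$. The case split is unnecessary once one observes that Proposition~\ref{short-ish-sums} applies throughout the stated range and that the lower bound on $|t-s|$ suffices to convert $p^{-n\delta}$ into a power of $|t-s|$. (Minor remark: your interpolation $\sum|X|^\alpha\leqslant\|X\|_\infty^{\alpha-2}\sum|X|^2$ holds for all real $\alpha\geqslant 2$, so the restriction to even $\alpha$ is not needed.)
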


\begin{proof}
We start by observing that, from the definition of $\widetilde{\Kl_{p^n}}(\cdot;(a,b_0))$,
\begin{equation}
\label{Kl-Kl-incomplete-sum}
\widetilde{\Kl_{p^n}}(t;(a,b_0))-\widetilde{\Kl_{p^n}}(s;(a,b_0))=\frac1{p^{n/2}}\sum_{x\in\mathcal{I}_{s,t}}e_{p^n}(ax+b_0\bar{x}),
\end{equation}
for a certain interval $\mathcal{I}_{s,t}$ of length $|\mathcal{I}_{s,t}|\ll p^n|t-s|$; cf.~\cite[(19)]{RicottaRoyerShparlinski2020}.

Take for now an arbitrary $\alpha\geqslant 0$. If $\lambda\geqslant\frac23$, then $|t-s|\leqslant p^{-2n/3}$, and, estimating the exponential sum in \eqref{Kl-Kl-incomplete-sum} trivially and using Lemma~\ref{tight-lemma0}, we conclude that for every $a\in(\mathbb{Z}/p^n\mathbb{Z})^{\times}$,
\begin{equation}
\label{central-eq1}
\big|\Kl_{p^n}(t;(a,b_0))-\Kl_{p^n}(s;(a,b_0))\big|^{\alpha}
\ll_{\alpha} |t-s|^{\alpha/2}+|t-s|^{\alpha}p^{\alpha n/2}\ll_{\alpha} |t-s|^{\alpha/4}.
\end{equation}

In the more interesting case $\lambda<\frac23$, when $|t-s|>p^{-2n/3}$, estimating the exponential sum in \eqref{Kl-Kl-incomplete-sum} using Proposition~\ref{short-ish-sums}, and using Lemma~\ref{tight-lemma0}, we conclude that there exists a $\delta=\delta(\lambda)>0$ such that, for every $a\in(\mathbb{Z}/p^n\mathbb{Z})^{\times}$,
\begin{equation}
\label{central-eq2}
\begin{aligned}
\big|\Kl_{p^n}(t;(a,b_0))-\Kl_{p^n}(s;(a,b_0))\big|^{\alpha}
&\ll_{\alpha} |t-s|^{\alpha/2}+p^{-\alpha n\delta}\\
&\ll_{\alpha,\lambda} |t-s|^{\alpha/2}+|t-s|^{(3/2)\alpha\delta}.
\end{aligned}
\end{equation}

On the other hand, using orthogonality, we have that
\begin{equation}
\label{central-eq0}
\frac1{p^{n-1}}\sum_{a\in(\mathbb{Z}/p^n\mathbb{Z})^{\times}_{a_1}}\big|\widetilde{\Kl_{p^n}}(t;(a,b_0))-\widetilde{\Kl_{p^n}}(s;(a,b_0))\big|^2\ll\frac{|\mathcal{I}_{s,t}|}{p^n}\ll |t-s|.
\end{equation}
Combining \eqref{central-eq0} with \eqref{central-eq1} and \eqref{central-eq2} with $\alpha-2\geqslant 0$ in place of $\alpha$, we conclude that
\[ \frac1{p^{n-1}}\sum_{a\in(\mathbb{Z}/p^n\mathbb{Z})^{\times}_{a_1}}\big|\widetilde{\Kl_{p^n}}(t;(a,b_0))-\widetilde{\Kl_{p^n}}(s;(a,b_0))\big|^{\alpha}\ll_{\alpha}|t-s|^{1+b(\alpha-2)}, \]
with $b=\min(\frac14,\frac32\delta(\lambda))>0$. (The cut-off at $\frac23$ was chosen for concreteness, and the argument applies with a cutoff at any $\lambda_0>\frac12$, leading to slightly different choices for $0<b<\frac12$.)
\end{proof}

\begin{lemma}
\label{tight-lemma3}
For every even integer $\alpha\geqslant 2$, there exists a $0<\lambda<1$ such that, if
\[ p^{-\lambda n}\leqslant |t-s|\leqslant 1, \]
then
\[ \frac1{p^{n-1}}\sum_{a\in(\mathbb{Z}/p^n\mathbb{Z})^{\times}_{a_1}}\big|\Kl_{p^n}(t;(a,b_0))-\Kl_{p^n}(s;(a,b_0))\big|^{\alpha}\ll |t-s|^{\alpha/2}. \]
\end{lemma}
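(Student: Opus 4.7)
The plan is to reduce the $\alpha$-th moment over $a$ of the increment $\Kl_{p^n}(t;(a,b_0))-\Kl_{p^n}(s;(a,b_0))$ to the corresponding moment of the limiting random series $\Kl(t;p;(a_1,b_0))-\Kl(s;p;(a_1,b_0))$ via Proposition~\ref{complex-moments-prop}, and then bound the latter moment using the $L^2$-structure of the series. Writing $\alpha=2K$, I would expand
\[ |\Kl_{p^n}(t;(a,b_0))-\Kl_{p^n}(s;(a,b_0))|^{2K}=\sum_{i,j=0}^K\binom{K}{i}\binom{K}{j}(-1)^{i+j}\Kl_{p^n}(t;(a,b_0))^i\,\Kl_{p^n}(s;(a,b_0))^{K-i}\,\overline{\Kl_{p^n}(t;(a,b_0))}^j\,\overline{\Kl_{p^n}(s;(a,b_0))}^{K-j}, \]
so that averaging over $a\in(\mathbb{Z}/p^n\mathbb{Z})^{\times}_{a_1}$ each term becomes a complex moment $\mathcal{M}_{p^n}((t,s);(j,K-j),(i,K-i);b_0)$ from \eqref{complex-moment-def} with $\|\bm{m}\|_1+\|\bm{n}\|_1=\alpha$. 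Applying Proposition~\ref{complex-moments-prop} to each of the finitely many (depending on $\alpha$ only) resulting moments yields a constant $\delta=\delta(\alpha)>0$ such that
\[ \frac1{p^{n-1}}\sum_{a\in(\mathbb{Z}/p^n\mathbb{Z})^{\times}_{a_1}}\bigl|\Kl_{p^n}(t;(a,b_0))-\Kl_{p^n}(s;(a,b_0))\bigr|^{\alpha}=\mathbb{E}\bigl(|\Kl(t;p;(a_1,b_0))-\Kl(s;p;(a_1,b_0))|^{\alpha}\bigr)+\mathrm{O}_{\alpha}\bigl(p^{-\delta n}\bigr). \]

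Next, I would bound the expectation on the right by $|t-s|^{\alpha/2}$. Since both $\Kl(t;p;(a_1,b_0))$ and $\Kl(s;p;(a_1,b_0))$ are built from the same iid family $(U^{\sharp}_h)$ of probability law $\mu$, their difference is the random Fourier series
\[ \Kl(t;p;(a_1,b_0))-\Kl(s;p;(a_1,b_0))=\sum_{\substack{h\in\mathbb{Z}\\(a_1-h)b_0\in(\mathbb{Z}/p\mathbb{Z})^{\times 2}}}c_h(t,s)\,U^{\sharp}_h,\quad c_h(t,s)=\frac{e(ht)-e(hs)}{2\pi ih} \]
(with the $h=0$ term $(t-s)U^{\sharp}_0$ if present). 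Each $U^{\sharp}_h$ is bounded in $[-2,2]$, has mean zero, and variance $\int x^2\,\dd\mu=2$, so the tail behavior is sub-Gaussian with parameter at most $2\sum_h|c_h(t,s)|^2$. Using the standard identity $\sum_{h\neq 0}\sin^2(\pi hx)/(\pi h)^2=x(1-x)$ (Parseval for the indicator of $[s,t]$), this sum is $\ll|t-s|$ uniformly in $p$, so
\[ \mathbb{E}\bigl(|\Kl(t;p;(a_1,b_0))-\Kl(s;p;(a_1,b_0))|^{\alpha}\bigr)\ll_{\alpha}|t-s|^{\alpha/2}, \]
exactly as in the corresponding step of Proposition~\ref{random-series-properties} (equivalently, by direct expansion using independence and pairing up indices, with the off-diagonal contributions contributing lower order in $|t-s|$).

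Combining the two bounds,
\[ \frac1{p^{n-1}}\sum_{a\in(\mathbb{Z}/p^n\mathbb{Z})^{\times}_{a_1}}\bigl|\Kl_{p^n}(t;(a,b_0))-\Kl_{p^n}(s;(a,b_0))\bigr|^{\alpha}\ll_{\alpha}|t-s|^{\alpha/2}+p^{-\delta n}. \]
Choosing $\lambda=\lambda(\alpha):=2\delta(\alpha)/\alpha\in(0,1)$ (shrinking if necessary so that $\lambda<1$), the hypothesis $|t-s|\geqslant p^{-\lambda n}$ gives $p^{-\delta n}\leqslant |t-s|^{\alpha/2}$, which absorbs the error and yields the claim. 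There is no substantial obstacle here beyond citing the already developed machinery: Proposition~\ref{complex-moments-prop} provides the crucial arithmetic input (which in turn hinges on the sums-of-products estimate, Theorem~\ref{sums-of-products-theorem}), and the remaining probabilistic bound is a routine moment computation for the limiting Fourier series. The only delicate point is synchronizing $\lambda$ with the quantitative exponent $\delta(\alpha)$, which is why $\lambda$ must depend on $\alpha$ (and shrinks as $\alpha$ grows), matching the range of validity already needed in Lemma~\ref{tight-lemma2}.
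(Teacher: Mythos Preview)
Your proof is correct and follows essentially the same strategy as the paper: reduce the $\alpha$-th moment of the increment to a moment of the limiting random series plus an error $\mathrm{O}_\alpha(p^{-\delta n})$, bound the probabilistic moment by $|t-s|^{\alpha/2}$ via sub-Gaussianity and Parseval, and then absorb the error by choosing $\lambda=2\delta(\alpha)/\alpha$. The only cosmetic difference is packaging: the paper re-runs the completion argument on $\widetilde{\Kl_{p^n}}$ and compares to an auxiliary series with coefficients $\alpha_{p^n}(h;t)$ (invoking Lemma~\ref{tight-lemma0} at the end to pass back to $\Kl_{p^n}$), whereas you shortcut this by citing Proposition~\ref{complex-moments-prop} directly on $\Kl_{p^n}$ and compare to $\Kl(t;p;(a_1,b_0))$; both routes rest on Theorem~\ref{sums-of-products-theorem} and arrive at the same estimate.
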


\begin{proof}
Analogously to the proof of \cite[Lemma 4.6]{RicottaRoyerShparlinski2020}, define a random variable
\[ \Kl_{p^n}\Big(t;\frac{p^n-1}2;(a_1,b_0)\Big)=\sum_{\substack{|h|\leqslant(p^n-1)/2\\ (a_1-h)b_0\in(\mathbb{Z}/p\mathbb{Z})^{\times 2}}}\alpha_{p^n}(h;t)U_h^{\sharp}, \]
where $U_h^{\sharp}$ is a sequence of independent random variables of probability law $\mu$ in \eqref{def-mu}, similarly to \eqref{KlpH-truncated} with $H=\frac12(p^n-1)$ but with coefficients $\alpha_{p^n}(h;t)$ in place of $\beta(h;t)$.
Then we have as in \cite[(20)]{RicottaRoyerShparlinski2020}
\[ \mathbb{E}\bigg(\bigg|\Kl_{p^n}\Big(t;\frac{p^n-1}2;(a_1,b_0)\Big)-\Kl_{p^n}\Big(s;\frac{p^n-1}2;(a_1,b_0)\Big)\bigg|^{\alpha}\bigg)\leqslant 32^{\alpha/2}c_{\alpha}|t-s|^{\alpha/2} \]
with a certain $c_{\alpha}>0$. This is an absolute, purely probabilistic inequality which does not depend on any estimates of Kloosterman sums.

On the other hand, expanding using \eqref{completion-eq1}, we obtain
\begin{align*}
&\frac{1}{p^{n-1}}\sum_{a\in(\mathbb Z/p^n\mathbb Z)^\times_{a_1}}|\widetilde{\Kl_{p^n}}(t;(a,b_0))-\widetilde{\Kl_{p^n}}(s;(a,b_0))|^{\alpha}\\
&\qquad=\frac{1}{p^{n-1}}\sum_{a\in(\mathbb Z/p^n\mathbb Z)^\times_{a_1}}\bigg|\frac1{p^{n/2}}\sum_{\substack{h\bmod p^n\\(a-h)b_0\in(\mathbb{Z}/p\mathbb{Z})^{\times 2}}}\big(\alpha_{p^n}(h;t)-\alpha_{p^n}(h;s)\big)\Kl_{p^n}(a-h;b_0)\bigg|^{\alpha}\\
&\qquad=\frac{1}{p^{\alpha n/2}}\sum_{\bm{h}\in H^{\alpha}_{a_1(p^n)}}\prod_{j=1}^{\alpha/2}\overline{\big(\alpha_{p^n}(h_j;t)-\alpha_{p^n}(h_j;s)\big)}\prod_{j=\alpha/2+1}^{\alpha}\big(\alpha_{p^n}(h_j;t)-\alpha_{p^n}(h_j;s)\big)\\
&\qquad\qquad\times\frac{1}{p^{n-1}}\sum_{a\in(\mathbb Z/p^n\mathbb Z)^\times_{a_1}}\prod_{j=1}^{\alpha}\Kl_{p^n}(a-h_j,b_0),
\end{align*}
as in \eqref{complex-moment-with-sums-of-products}. We now proceed as in the proofs of Propositions~\ref{main-error-terms-proposition} and \ref{error-term-estimate-proposition}, which includes the use of Theorem~\ref{sums-of-products-theorem} with a phase $f_{T,\bm{\epsilon}}$ as in \eqref{f-T-eps} with $|T|\leqslant\alpha$ and $\|\bm{\epsilon}\|_1\leqslant 2\alpha$. This shows (cf.~\eqref{main-term-eval} and Proposition~\ref{error-term-estimate-proposition}) that there exists a $\delta=\delta(\alpha)>0$ such that
\begin{align*}
&\frac1{p^{n-1}}\sum_{a\in(\mathbb{Z}/p^n\mathbb{Z})^{\times}_{a_1}}\big|\widetilde{\Kl_{p^n}}(t;(a,b_0))-\widetilde{\Kl_{p^n}}(s;(a,b_0))\big|^{\alpha}\\
&\qquad =\mathbb{E}\bigg(\bigg|\Kl_{p^n}\Big(t;\frac{p^n-1}2;(a_1,b_0)\Big)-\Kl_{p^n}\Big(s;\frac{p^n-1}2;(a_1,b_0)\Big)\bigg|^{\alpha}\bigg)+\mathrm{O}_{\alpha}\bigg(\frac1{p^{\delta n}}\bigg)\\
&\qquad\ll_{\alpha} |t-s|^{\alpha/2}+p^{-\delta n}.
\end{align*}
Picking for concreteness 
$\lambda=2\delta(\alpha)/\alpha$, and invoking Lemma~\ref{tight-lemma0} again, we conclude that for $|t-s|\geqslant p^{-\lambda n}$
\[ \frac1{p^{n-1}}\sum_{a\in(\mathbb{Z}/p^n\mathbb{Z})^{\times}_{a_1}}\big|\Kl_{p^n}(t;(a,b_0))-\Kl_{p^n}(s;(a,b_0))\big|^{\alpha}\ll_{\alpha} |t-s|^{\alpha/2}. \]
(Any choice $\lambda<\delta(\alpha)$ leads to an acceptable upper bound $\ll_{\alpha,\epsilon}|t-s|^{\alpha/2}+|t-s|^{1+\epsilon}$.)
\end{proof}

\subsection{Proof of Theorem~\ref{main}}
\label{proof-of-main}
Theorem~\ref{main} follows by combining Corollary~\ref{finite-distributions-corollary} and Corollary~\ref{tight-corollary} using Prokhorov's criterion (Proposition~\ref{prokhorov}).

\subsection{Rearranged paths}
\label{rearranged-paths-subsec}
In this section, we indicate changes in the proof of Theorem~\ref{main} that lead to a proof of Theorem~\ref{rearranged-paths-thm}.

According to the method of moments, as in \eqref{complex-moment-def} and Proposition~\ref{complex-moments-prop}, for convergence of $\Kl^{\circ}_{p^n}\to\Kl^{\circ}$ in the sense of finite distributions it suffices to show that for every two fixed $k$-tuples $\bm{m}=(m_1,\dots,m_k)$, $\bm{n}=(n_1,\dots,n_k)\in\mathbb{Z}_{\geqslant 0}^k$, there exists a $\delta=\delta(\|\bm{m}\|_1+\|\bm{n}\|_1)>0$ such that, for every $\bm{t}=(t_1,\dots,t_k)\in[0,1]^k$, $b_0\in(\mathbb{Z}/p\mathbb{Z})^{\times}$, and $n\geqslant 2$,
\begin{align*}
\mathcal{M}^{\circ}_{p^n}(\bm{t};\bm{m},\bm{n};b_0)
&:=\frac{1}{\varphi(p^n)/2}\sum_{a\in b_0(\mathbb Z/p^n\mathbb Z)^{\times 2}}\prod_{i=1}^k \overline{\Kl^{\circ}_{p^n}(t_i;(a,b_0))}^{m_i}\Kl^{\circ}_{p^n}(t_i;(a,b_0))^{n_i}\\
&=\mathbb E\Big(\prod_{i=1}^k\overline{\Kl^{\circ}(t_i)}^{m_i}\Kl^{\circ}(t_i)^{n_i}\Big)+\mathrm{O}_{\ell(\bm{m}+\bm{n})}(p^{-\delta n}).
\end{align*}
As in \eqref{modified-path} we define the renormalization $\widetilde{\Kl_{p^n}^{\circ}}(t;(a,b_0)):[0,1]\to\mathbb{C}$, for every $k\in\{1,\dots,p^{n-2}\}$ as
\[ \forall t\in\Big(\frac{k-1}{p^{n-2}},\frac{k}{p^{n-2}}\Big],\quad \widetilde{\Kl_{p^n}^{\circ}}(t;(a,b_0))=\frac{1}{p^{n/2}}\mathop{\sum\nolimits^\times}_{1\leqslant x\leqslant x_k^{\circ}(t)}f_{p^n}(x;(a,b)), \]
where $x_k^{\circ}(t)=\varphi(p^{n-1})t+k-1$. Completion corresponding to Lemma~\ref{completion-lemma} yields, with some careful normalization,
\begin{equation}
\label{completion-circ}
\widetilde{\Kl^{\circ}_{p^n}}(t;(a,b_0))
=\frac1{p^{n/2}}\sum_{h\bmod p^{n-1}}\alpha^{\circ}_{p^n}(h;t)\Kl_{p^n}(a-ph;b_0),
\end{equation}
where the coefficients
\[ \alpha^{\circ}_{p^n}(h;t)=\frac1{p^{n/2-1}}\sum_{1\leqslant x\leqslant x_k^{\circ}(t)}e_{p^{n-1}}(hx) \]
satisfy, for $|h|<\frac12p^{n-1}$, exactly the same estimates as $\alpha_{p^n}(h;t)$ in \eqref{completion-eq2}.
As in Lemma~\ref{moments-approx-lemma}, we prove that the corresponding complex moments $\widetilde{\mathcal{M}^{\circ}_{p^n}}(\bm{t};\bm{m};\bm{n};b_0)$ analogous to \eqref{complex-moment-renorm-def}
are within $\mathrm{O}_{\ell(\bm{m}+\bm{n})}(p^{-n/2}\log^{\ell(\bm{m}+\bm{n})}(p^n))$ of $\mathcal{M}^{\circ}_{p^n}(\bm{t};\bm{m};\bm{n};b_0)$;
on the other hand, substituting the completion expansion \eqref{completion-circ} we find analogously to \eqref{complex-moment-with-sums-of-products} that
\begin{align*}
\widetilde{\mathcal{M}^{\circ}_{p^n}}(\bm{t};\bm{m},\bm{n};b_0)
=\frac{1}{p^{n\ell(\bm{m}+\bm{n})/2}}&\sum_{\bm{h}\in H_{\circ}^{\ell(\bm{m}+\bm{n})}}\prod_{i=1}^k\prod_{j=1}^{m_i}\overline{\alpha^{\circ}_{p^n}(h_{i,j};t_i)}\prod_{j=m_i+1}^{m_i+n_i}\alpha^{\circ}_{p^n}(h_{i,j};t_i)\\
&\qquad\times\frac{1}{\varphi(p^n)/2}\sum_{a\in b_0(\mathbb Z/p^n\mathbb Z)^{\times 2}}\prod_{i=1}^k\prod_{j=1}^{m_i+n_i}\Kl_{p^n}(a-ph_{i,j},b_0),
\end{align*}
where $H_{\circ}=[-\frac12 p^{n-1},\frac12p^{n-1}]\cap\mathbb{Z}$. From here, the core argument proceeds in the same way; the main term arises from $\bm{h}\in H_{\circ}^{\ell(\bm{m}+\bm{n})}$ for which $\bm{\mu}_{\bm{h}}:\mathbb{Z}/p^{n-1}\mathbb{Z}\to\mathbb{Z}_{\geqslant 0}$ defined as in \S\ref{main-error-terms} satisfies $2\mid\bm{\mu}_{\bm{h}}$, and it evaluates as
\begin{equation}
\begin{aligned}
&\frac{1}{p^{n\ell(\bm{m}+\bm{n})/2}}\!\!\sum_{\bm{h}\in H^{\ell(\bm{m}+\bm{n})}_{\circ}}\prod_{i=1}^k\prod_{j=1}^{m_i}\overline{\alpha^{\circ}_{p^n}(h_{i,j};t_i)}\prod_{j=m_i+1}^{m_i+n_i}\!\!\alpha^{\circ}_{p^n}(h_{i,j};t_i)
\!\!\prod_{\tau\in T(\bm{\mu}_{\bm{h}})}\!\!\frac{\delta_{2\mid\bm{\mu}_{\bm{h}}(\tau)}}{2^{\bm{\mu}_{\bm{h}}(\tau)}}\binom{\bm{\mu}_{\bm{h}}(\tau)}{\bm{\mu}_{\bm{h}}(\tau)/2}\\
&\qquad=\sum_{\bm{h}\in H^{\ell(\bm{m}+\bm{n})}_{\circ}}\prod_{i=1}^k\prod_{j=1}^{m_i}\overline{\beta(h_{i,j};t_i)}\prod_{j=m_i+1}^{m_i+n_i}\beta(h_{i,j};t_i)\mathbb E\bigg(\prod_{i=1}^k\prod_{j=1}^{m_i+n_i}U_{h_{i,j}}^{\sharp}\bigg)\\
&\qquad\qquad\qquad+\mathrm{O}_{\ell(\bm{m}+\bm{n})}\bigg(\frac{\log^{\ell(\bm{m}+\bm{n})}p}{p^n}\bigg)\\
&\qquad=\mathbb E\bigg(\prod_{i=1}^k\overline{\Kl^{\circ}(t_i)}^{m_i}\Kl^{\circ}(t_i)^{n_i}\bigg)+\mathrm{O}_{\ell(\bm{m}+\bm{n})}\bigg(\frac{\log^{\ell(\bm{m}+\bm{n})}p}{p^{n/2}}\bigg),
\end{aligned}
\end{equation}
while the error term is $\mathrm{O}_{\ell(\bm{m}+\bm{n})}(p^{-\delta n})$ for some $\delta=\delta(\|\bm{m}\|_1+\|\bm{n}\|_1)>0$ as in \S\ref{error-terms-sec}.

Turning to the proof of convergence in law in \S\ref{tightness-thm1-subsec}, we prove the analogue of Proposition~\ref{tightness-prop}, namely that for every even integer $\alpha\geqslant 4$, there exists a $\beta=\beta(\alpha)>0$ such that, for every $0\leqslant s,t\leqslant 1$,
\[ \frac1{\varphi(p^n)/2}\sum_{a\in b_0(\mathbb{Z}/p^n\mathbb{Z})^{\times 2}}\big|\Kl^{\circ}_{p^n}(t;(a,b_0))-\Kl^{\circ}_{p^n}(s;(a,b_0))\big|^{\alpha}\ll_{\alpha} |t-s|^{1+\beta}, \]
which in particular shows that the sequence of random variables $\Kl_{p^n}^{\circ}$ is tight as $n\to\infty$. This proceeds analogously to \S\ref{tightness-thm1-subsec}, with the key arithmetic input being the exponential sum estimate analogous to \eqref{incomplete-kloost} that in intervals $\mathcal{I}$ of length $|\mathcal{I}|\leqslant p^{n(1-\lambda)}$,
\[ \frac1{p^{n/2}}\sum_{x\in\mathcal{I}}f_{p^n}(x;(a,b))=\frac{p}{p^{n/2}}\sum_{\substack{x\in\mathcal{I}\\x^2\equiv\bar{a}b\bmod p}}e_{p^n}(ax+b\bar{x})\ll_{\lambda}p^{-n\delta} \]
for some $\delta=\delta(\lambda)>0$, which in fact follows from \eqref{incomplete-kloost} or can be independently derived from \cite[Theorem 2]{Milicevic2016}. The proofs of Lemmata~\ref{tight-lemma1} and \ref{tight-lemma0} go through for $\Kl^{\circ}_{p^n}$ and $\widetilde{\Kl^{\circ}_{p^n}}$ with only trivial modifications, with the cutoff for $|t-s|$ replaced by $1/(\varphi(p^{n-1})-1)$. The same holds for the proofs of Lemmata~\ref{tight-lemma2} and \ref{tight-lemma3}, additionally replacing all averages over $a\in(\mathbb{Z}/p^n\mathbb{Z})^{\times}_{a_1}$ with averages over $a\in b_0(\mathbb{Z}/p^n\mathbb{Z})^{\times 2}$, and all sums over $|h|\leqslant \frac12(p^n-1)$ subject to $(a_1-h)b_0\in(\mathbb{Z}/p\mathbb{Z})^{\times 2}$ and corresponding shifted sums $\Kl_{p^n}(a-h;b_0)$ with sums over $|h|\leqslant \frac12(p^{n-1}-1)$ and corresponding shifted sums $\Kl_{p^n}(a-ph;b_0)$.

\subsection{Large \texorpdfstring{$p$}{p} limit}
\label{large-p-section}
In this section, we will prove the following theorem.
\begin{theorem}
\label{large-p-theorem}
Fix a nonzero integer $b_0$. Then the sequence of $C^0([0,1],\mathbb{C})$-valued random variables $\Kl^{\bullet}(\cdot;p)$ defined in \eqref{Kl-bullet} converges in law, as $p\to\infty$, to the random variable $\Kl$ defined in \eqref{KKl}.
\end{theorem}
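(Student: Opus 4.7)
The plan is to deduce Theorem~\ref{large-p-theorem} from Prokhorov's criterion (Proposition~\ref{prokhorov}), by establishing separately that $\Kl^{\bullet}(\cdot;p)\to\Kl$ as $p\to\infty$ in the sense of finite distributions and that the sequence $\bigl(\Kl^{\bullet}(\cdot;p)\bigr)_p$ is tight.

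For convergence in the sense of finite distributions I would apply the method of moments. Fix $k\geqslant 1$, $\bm{t}\in[0,1]^k$, and $\bm{m},\bm{n}\in\mathbb{Z}_{\geqslant 0}^k$. By the definition \eqref{Kl-bullet} of $\Kl^{\bullet}(\cdot;p)$,
\[ \mathbb{E}\Big(\prod_{i=1}^{k}\overline{\Kl^{\bullet}(t_i;p)}^{m_i}\Kl^{\bullet}(t_i;p)^{n_i}\Big)=\frac{1}{p-1}\sum_{a_1\in(\mathbb{Z}/p\mathbb{Z})^{\times}}\mathbb{E}\Big(\prod_{i=1}^{k}\overline{\Kl(t_i;p;(a_1,b_0))}^{m_i}\Kl(t_i;p;(a_1,b_0))^{n_i}\Big). \]
First I would truncate each factor to $\Kl_H$ as in \eqref{KlpH-truncated}, with the tail controlled in $L^{\alpha}$ uniformly in $p$ by the second item of Proposition~\ref{random-series-properties}. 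For the truncated moments, the independence of the $U^{\sharp}_h$ and their moments \eqref{expectation-of-products} reduce the inner expectation (for each fixed $a_1$) to a sum over tuples $\bm{h}$ of products of weights $\beta(h;t_i)$, restricted to tuples whose every distinct entry $\tau$ satisfies $(a_1-\tau)b_0\in(\mathbb{Z}/p\mathbb{Z})^{\times 2}$, multiplied by $\prod_{\tau}\delta_{2\mid\mu_{\bm{h}}(\tau)}\binom{\mu_{\bm{h}}(\tau)}{\mu_{\bm{h}}(\tau)/2}$. The key step is then to average over $a_1$ by means of the identity
\[ \frac{1}{p-1}\sum_{a_1\in(\mathbb{Z}/p\mathbb{Z})^{\times}}\prod_{j=1}^{r}\mathbf{1}\!\Big[(a_1-\tau_j)b_0\in(\mathbb{Z}/p\mathbb{Z})^{\times 2}\Big]=\frac{1}{2^r}+O_r\!\Big(\frac{1}{\sqrt{p}}\Big), \]
valid uniformly in distinct residues $\tau_1,\ldots,\tau_r$; this follows classically by expanding $\mathbf{1}[x\in\mathrm{sq}]=\tfrac12(1+(x/p))$ and applying Weil's bound to the resulting non-trivial multiplicative character sums in $a_1$, with the contribution from $a_1\equiv\tau_j\pmod p$ absorbed into the error term. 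Passing to the limit $p\to\infty$, each distinct shift $\tau_j$ thus contributes a factor $\tfrac12$; this matches exactly the reweighting $\mu_U=\tfrac12\delta_0+\tfrac12\mu$, which yields $\mathbb{E}(U_h^m)=\tfrac12\mathbb{E}(U^{\sharp m})$ for $m\geqslant 1$. The limit therefore agrees with the analogous expansion of the corresponding truncated moment of $\Kl(\cdot)$ in \eqref{KKl}; letting $H\to\infty$ completes the finite-dimensional argument.

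For tightness, Kolmogorov's criterion (Proposition~\ref{kolmogorov}) reduces matters to an estimate of the form
\[ \mathbb{E}\bigl(|\Kl^{\bullet}(t;p)-\Kl^{\bullet}(s;p)|^{\alpha}\bigr)\ll_{\alpha}|t-s|^{1+\delta} \]
for some $\alpha>2$ and $\delta>0$, uniformly in $p$. Averaging over $a_1$, this follows from the same bound for each $\Kl(\cdot;p;(a_1,b_0))$ separately, which in turn is a uniform sub-Gaussian estimate: the difference $\Kl(t;p;(a_1,b_0))-\Kl(s;p;(a_1,b_0))$ is a random series with independent bounded coefficients $U^{\sharp}_h\in[-2,2]$ and deterministic weights $\beta(h;t)-\beta(h;s)$ satisfying $\sum_h|\beta(h;t)-\beta(h;s)|^2\ll|t-s|$ by Parseval applied to $\mathbf{1}_{[s,t]}$, so standard moment bounds for sums of independent bounded random variables give $\mathbb{E}(|\cdot|^{\alpha})\ll_{\alpha}|t-s|^{\alpha/2}$ uniformly in $p$ and $a_1$, which is acceptable for any $\alpha>2$.

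I expect the main technical point to be purely combinatorial bookkeeping: reconciling the factor $1/2^r$ arising from the $p$-adic squareness constraints with the reweighting $\tfrac12\delta_0+\tfrac12\mu$ in the law of $U_h$, across all partitions of $\bm{h}$ into distinct shifts. The underlying arithmetic---asymptotic independence of the squareness events $(a_1-\tau_j)b_0\in(\mathbb{Z}/p\mathbb{Z})^{\times 2}$ across disjoint $\tau_j$---is entirely classical and handled by Weil's bound, requiring no new estimate of exponential sums beyond those already assembled in the paper.
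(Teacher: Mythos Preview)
Your proposal is correct and follows essentially the same route as the paper: Prokhorov's criterion, with finite distributions handled by the method of moments after truncation (the key arithmetic input being the Weil-bound estimate for the density of $a_1$ satisfying the simultaneous squareness constraints, which is exactly the content of the paper's Proposition~\ref{RR-Weil}), and tightness handled by a uniform sub-Gaussian bound with variance controlled by Parseval applied to $\mathbf{1}_{[s,t]}$. Your observation that the factor $1/2^{|T_{\bm h}|}$ from the squareness constraints matches the reweighting $\mu_U=\tfrac12\delta_0+\tfrac12\mu$ via $\mathbb{E}(U_h^m)=\tfrac12\,\mathbb{E}(U^{\sharp m}_h)$ for $m\geqslant 1$ is precisely the combinatorial identity the paper uses implicitly in~\eqref{MpH-MH}.
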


Using Prokhorov's criterion for convergence in law of Proposition~\ref{prokhorov}, Theorem~\ref{large-p-theorem} follows directly from the following two statements.
\begin{proposition}
\label{fin-dis-large-p-prop}
Fix a nonzero integer $b_0$.
Then the sequence of $C^0([0,1],\mathbb{C})$-valued random variables $\Kl^{\bullet}(\cdot;p)$, defined for every odd prime $p$ in \eqref{Kl-bullet}, converges in the sense of finite distributions to the $C^0([0,1],\mathbb{C})$-valued random variable $\Kl$ defined in \eqref{KKl} as $p\to\infty$.
\end{proposition}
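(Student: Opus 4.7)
The strategy is to apply the method of moments. Since the limit $\Kl$ has sufficient moments by \cite[Proposition 2.1]{KowalskiSawin2016}, for convergence in the sense of finite distributions it suffices to show that for every $k\geqslant 1$, every tuple $\bm{t} \in [0,1]^k$, and every pair $\bm{m}, \bm{n} \in \mathbb{Z}_{\geqslant 0}^k$, the complex moment
\[ \mathcal{M}_p^{\bullet}(\bm{t};\bm{m},\bm{n}) := \mathbb{E}_{\nu}\bigg(\prod_{i=1}^k \overline{\Kl^{\bullet}(t_i;p)}^{m_i}\Kl^{\bullet}(t_i;p)^{n_i}\bigg) \]
converges as $p \to \infty$ to the analogous moment of $\Kl$ in \eqref{KKl}. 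By \eqref{Kl-bullet},
\[ \mathcal{M}_p^{\bullet}(\bm{t};\bm{m},\bm{n}) = \frac{1}{p-1}\sum_{a_1\in(\mathbb{Z}/p\mathbb{Z})^{\times}}\mathbb{E}\bigg(\prod_{i=1}^k\overline{\Kl(t_i;p;(a_1,b_0))}^{m_i}\Kl(t_i;p;(a_1,b_0))^{n_i}\bigg), \]
the inner expectation being over the i.i.d.\ sequence $(U^{\sharp}_h)$ of law $\mu$.

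\textbf{Truncation and expansion.} Truncate the series \eqref{our-random-series} and \eqref{KKl} at frequency $|h|\leqslant H$. By Proposition~\ref{random-series-properties}, item (2), uniformly in $p$ and $a_1$, and by the analogous tail bound for $\Kl$ in \cite[Proposition 2.1]{KowalskiSawin2016}, the tail contribution to any fixed moment is $\ll_{k,\bm{m},\bm{n}} H^{-1/2}$, uniformly in $p$. Hence it suffices to match the truncated moments at each fixed $H$ and then let $H\to\infty$. Expanding the truncated product for each $a_1$ and invoking independence of the $(U^{\sharp}_h)$ across distinct $h$ together with \eqref{moments-of-mu}, one finds
\[ \mathcal{M}_p^{\bullet,H}(\bm{t};\bm{m},\bm{n}) = \sum_{\bm{h}\in([-H,H]\cap\mathbb{Z})^{\ell(\bm{m}+\bm{n})}} C(\bm{h};\bm{t})\, P_p(T(\bm{\mu}_{\bm{h}})), \]
where, in the notation of \S\ref{main-error-terms},
\[ C(\bm{h};\bm{t}) = \prod_{i=1}^k\bigg(\prod_{j=1}^{m_i}\overline{\beta(h_{i,j},t_i)}\prod_{j=m_i+1}^{m_i+n_i}\beta(h_{i,j},t_i)\bigg)\prod_{\tau\in T(\bm{\mu}_{\bm{h}})}\delta_{2\mid\bm{\mu}_{\bm{h}}(\tau)}\binom{\bm{\mu}_{\bm{h}}(\tau)}{\bm{\mu}_{\bm{h}}(\tau)/2}, \]
and the averaging over $a_1$ produces the density factor
\[ P_p(T) := \frac{1}{p-1}\sum_{a_1\in(\mathbb{Z}/p\mathbb{Z})^{\times}}\prod_{\tau\in T}\mathbf{1}_{(a_1-\tau)b_0\in(\mathbb{Z}/p\mathbb{Z})^{\times 2}}. \]

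\textbf{Character-sum matching.} For every finite $T\subseteq\mathbb{Z}$ and every $p$ larger than the diameter of $T$, so that the $\tau\in T$ are distinct modulo $p$, expand $\mathbf{1}_{x\in(\mathbb{Z}/p\mathbb{Z})^{\times 2}}=\tfrac12(1+(x/p))$ and apply Weil's bound $\sum_{a_1\in(\mathbb{Z}/p\mathbb{Z})^{\times}}\prod_{\tau\in S}((a_1-\tau)/p)\ll_{|S|}\sqrt{p}$ for $\emptyset\neq S\subseteq T$ to obtain $P_p(T) = 2^{-|T|} + O_{|T|}(p^{-1/2})$. On the other hand, the target truncated moment of $\Kl_H$ factors by independence of the $U_h$ of law $\mu_U=\tfrac12\delta_0+\tfrac12\mu$; from \eqref{moments-of-mu} and the decomposition of $\mu_U$, one has $\mathbb{E}(U_\tau^{\bm{\mu}(\tau)})=\tfrac12\delta_{2\mid\bm{\mu}(\tau)}\binom{\bm{\mu}(\tau)}{\bm{\mu}(\tau)/2}$ for $\bm{\mu}(\tau)\geqslant 1$, and the product over $\tau\in T(\bm{\mu}_{\bm{h}})$ reproduces exactly $C(\bm{h};\bm{t})\cdot 2^{-|T(\bm{\mu}_{\bm{h}})|}$, which matches the main term of $P_p$. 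Hence the truncated moments differ by $\ll_H p^{-1/2}$, which vanishes as $p\to\infty$, and letting $H\to\infty$ concludes the proof. The main input is Weil's bound on multiplicative character sums; compared with the $n\to\infty$ analysis in Theorem~\ref{sums-of-products-theorem}, this is a classical and much gentler ingredient, as the shifts $\tau$ are absolutely bounded integers that become automatically distinct modulo $p$ for all sufficiently large $p$.
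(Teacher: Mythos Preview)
Your proof is correct and follows essentially the same route as the paper's: method of moments, truncation at frequency $H$ with uniform-in-$p$ tail control via Proposition~\ref{random-series-properties}, expansion by independence of the $U^{\sharp}_h$ to isolate the density factor $P_p(T)$, and matching $P_p(T)=2^{-|T|}+\mathrm{O}_{|T|}(p^{-1/2})$ with the moments of $\mu_U=\tfrac12\delta_0+\tfrac12\mu$. The only cosmetic difference is that the paper imports this density estimate as Proposition~\ref{RR-Weil} from \cite[Proposition~4.8]{RicottaRoyer2018}, whereas you unpack it directly via $\mathbf{1}_{x\in(\mathbb{Z}/p\mathbb{Z})^{\times 2}}=\tfrac12(1+(x/p))$ and Weil's bound for multiplicative character sums; note only that your indicator identity is off by $\tfrac12$ at $x\equiv 0$, but this affects at most $|T|$ values of $a_1$ and is absorbed into the $\mathrm{O}(p^{-1/2})$ term.
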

\begin{proposition}
\label{tight-large-p-prop}
Fix a nonzero integer $b_0$. Then the sequence of $C^0([0,1],\mathbb{C})$-valued random variables $\Kl^{\bullet}(\cdot;p)$ defined in \eqref{Kl-bullet} is tight over all odd primes $p$.
\end{proposition}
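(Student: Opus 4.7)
The plan is to verify Kolmogorov's criterion (Proposition~\ref{kolmogorov}) with implied constants independent of $p$, which combined with Prokhorov's criterion (Proposition~\ref{prokhorov}) establishes the desired tightness over all odd primes. Concretely, I will show the fourth-moment estimate
$$\mathbb{E}\big(|\Kl^{\bullet}(t;p) - \Kl^{\bullet}(s;p)|^{4}\big) \ll |t-s|^{2} \qquad (s,t \in [0,1]),$$
with an absolute implied constant; this supplies Kolmogorov's criterion with $\alpha = 4$ and $\delta = 1$.

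First, unfolding the mixture definition \eqref{Kl-bullet}, the expectation above equals the average
$$\frac{1}{p-1}\sum_{a_1 \in (\mathbb{Z}/p\mathbb{Z})^{\times}} \mathbb{E}\big(|\Kl(t;p;(a_1,b_0)) - \Kl(s;p;(a_1,b_0))|^{4}\big),$$
so it suffices to bound each individual term uniformly in $a_1$ and $p$. By \eqref{our-random-series}, the increment $\Delta := \Kl(t;p;(a_1,b_0)) - \Kl(s;p;(a_1,b_0))$ is an almost surely convergent random series $\sum_h c_h U_h^{\sharp}$ with coefficients $c_h := \beta(h;t) - \beta(h;s)$ (using $\beta(\cdot;\cdot)$ from \eqref{completion-eq2}), the $U_h^{\sharp}$ i.i.d.\ of law $\mu$, and summation restricted to those $h \in \mathbb{Z}$ with $(a_1 - h)b_0 \in (\mathbb{Z}/p\mathbb{Z})^{\times 2}$.

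Next, I expand $\mathbb{E}|\Delta|^{4}$ by independence using the second and fourth moments of $\mu$ from \eqref{moments-of-mu}; a short calculation yields $\mathbb{E}|\Delta|^{4} \leq 12\big(\sum_h |c_h|^{2}\big)^{2}$. Finally, since $\beta(h;\cdot)$ are (up to the relabeling $h \mapsto -h$) precisely the Fourier coefficients on $[0,1]$ of the indicator $\mathbf{1}_{[0,\cdot]}$, Parseval's identity gives
$$\sum_{h \in \mathbb{Z}} |\beta(h;t)-\beta(h;s)|^{2} = \|\mathbf{1}_{[0,t]} - \mathbf{1}_{[0,s]}\|_{L^{2}[0,1]}^{2} = |t-s|,$$
and restricting to the relevant subset only decreases this sum; combining these ingredients yields $\mathbb{E}|\Delta|^{4} \leq 12|t-s|^{2}$, as desired.

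The main point to track---though not a serious obstacle---is that the implied constant must be uniform in $p$. This is automatic here: the measure $\mu$ and the coefficients $\beta(h;\cdot)$ are universal (independent of $p$), and the only $p$-dependence enters through the index set of summation, which Parseval's bound majorizes by monotonicity. Thus no genuine arithmetic input is needed for Proposition~\ref{tight-large-p-prop}, in contrast with Proposition~\ref{fin-dis-large-p-prop} on finite distributions.
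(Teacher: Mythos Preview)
Your proof is correct and follows essentially the same route as the paper: both verify Kolmogorov's criterion via the Parseval/Plancherel identity $\sum_{h\in\mathbb{Z}}|\beta(h;t)-\beta(h;s)|^2=|t-s|$, with the restriction to the admissible index set only helping. The one technical difference is that the paper argues via the subgaussian property of the $U_h^{\sharp}$ (obtaining moment bounds for all even $\alpha$ at once, first for the truncations $\Kl^{\bullet}_H$ and then passing to the limit $H\to\infty$), whereas you carry out a direct fourth-moment expansion on the full series; this is more elementary but specific to $\alpha=4$, and tacitly relies on the existence of fourth moments (Proposition~\ref{random-series-properties}) to justify working directly with the infinite sum.
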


As in the case of Theorem~\ref{main}, the arithmetic heart of Theorem~\ref{large-p-theorem} is in Proposition~\ref{fin-dis-large-p-prop}, for which we import the key ingredient from \cite{RicottaRoyer2018}, adapting to our notation. We note that the proof of Proposition~\ref{RR-Weil} relies on Weil's version of Riemann Hypothesis for curves over finite fields.
\begin{proposition}[{\cite[Proposition 4.8]{RicottaRoyer2018}}]
\label{RR-Weil}
For a set $T\subseteq\mathbb{Z}/p^n\mathbb{Z}$ and $b_0\in(\mathbb{Z}/p^n\mathbb{Z})^{\times}$, let $(\mathbb{Z}/p^n\mathbb{Z})^{[T]}$ be as in \eqref{range-of-definition}, and let $\bar{T}=(T+p\mathbb{Z})/p\mathbb{Z}\subseteq\mathbb{Z}/p\mathbb{Z}$. Then
\[ \big|(\mathbb{Z}/p^n\mathbb{Z})^{[T]}\big|=\frac{\varphi(p^n)}{2^{|\bar{T}|}}\bigg(1+\mathrm{O}\bigg(\frac{2^{|\bar{T}|}|\bar{T}|}{p^{1/2}}\bigg)\bigg), \]
with the implied constant independent of $p$.
\end{proposition}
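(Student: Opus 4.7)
The plan is to reduce the count modulo $p^n$ to a count modulo $p$ and then estimate the latter by the standard expansion of the squareness indicator via the Legendre symbol, followed by Weil's bound for multiplicative character sums on the line. Since $p$ is odd, a unit in $(\mathbb{Z}/p^n\mathbb{Z})^{\times}$ is a square if and only if its reduction modulo $p$ is a quadratic residue, so $(\mathbb{Z}/p^n\mathbb{Z})^{[T]}$ is exactly the preimage under reduction modulo $p$ of the analogously defined set $(\mathbb{Z}/p\mathbb{Z})^{[\bar T]}$, and reduction being $p^{n-1}$-to-$1$ on units gives $|(\mathbb{Z}/p^n\mathbb{Z})^{[T]}| = p^{n-1}|(\mathbb{Z}/p\mathbb{Z})^{[\bar T]}|$. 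Combined with $\varphi(p^n) = p^{n-1}(p-1)$, this reduces the proposition to the mod-$p$ estimate
\[ |(\mathbb{Z}/p\mathbb{Z})^{[\bar T]}| = \frac{p-1}{2^{|\bar T|}}\bigg(1 + O\bigg(\frac{2^{|\bar T|}|\bar T|}{\sqrt p}\bigg)\bigg). \]

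To handle the mod-$p$ count, let $\chi$ denote the Legendre symbol modulo $p$, with the convention $\chi(0) = 0$, and observe that any $\bar a \in \bar T$ makes some $(\bar a - \bar\tau)\bar b_0$ vanish and so contributes nothing to the count. For $\bar a \in (\mathbb{Z}/p\mathbb{Z})^{\times}\setminus\bar T$ the indicator of the square condition factorizes as $\prod_{\bar\tau\in\bar T}\tfrac12(1+\chi((\bar a-\bar\tau)\bar b_0))$, and expanding subset-by-subset yields
\[ |(\mathbb{Z}/p\mathbb{Z})^{[\bar T]}| = \frac{1}{2^{|\bar T|}}\sum_{S\subseteq\bar T}\chi(\bar b_0)^{|S|}\sum_{\bar a\in(\mathbb{Z}/p\mathbb{Z})^{\times}\setminus\bar T}\chi\bigg(\prod_{\bar\tau\in S}(\bar a-\bar\tau)\bigg). \]
The $S = \emptyset$ term delivers the main term $(p-1)/2^{|\bar T|}$ up to an $O(|\bar T|/2^{|\bar T|})$ correction. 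For $|S|=1$ the inner character sum over all of $\mathbb{Z}/p\mathbb{Z}$ vanishes by orthogonality, and for $|S|\geq 2$ the polynomial $\prod_{\bar\tau\in S}(X-\bar\tau)$ has at least two distinct simple roots, hence is not a constant times a perfect square, so Weil's bound yields a character sum of absolute value at most $(|S|-1)\sqrt p$. In each case restricting the summation from $\mathbb{Z}/p\mathbb{Z}$ to $(\mathbb{Z}/p\mathbb{Z})^{\times}\setminus\bar T$ incurs at most an $O(|\bar T|)$ boundary correction.

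Summing the Weil contributions over all $S$ with $|S|\geq 2$, using $\sum_{k\geq 2}\binom{|\bar T|}{k}(k-1)\leq |\bar T|\cdot 2^{|\bar T|-1}$, gives a total error of size $O(|\bar T|\sqrt p)$, which matches $(p-1)/2^{|\bar T|}\cdot O(2^{|\bar T|}|\bar T|/\sqrt p)$ as required; the various $O(|\bar T|)$ boundary corrections are of lower order and easily absorbed. The one substantive ingredient is Weil's bound for multiplicative character sums, and the main point to watch is verifying that for $|S|\geq 2$ the polynomial $\prod_{\bar\tau\in S}(X-\bar\tau)$ is not a constant times a perfect square (here immediate since its roots are simple), which is what allows Weil's bound to apply in the strong form we need; everything else is routine combinatorial bookkeeping.
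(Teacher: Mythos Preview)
Your proof is correct and follows the standard route: reduce to a mod-$p$ count using Hensel's lemma for squares, expand the quadratic-residue indicator via the Legendre symbol, and invoke Weil's bound for character sums with squarefree polynomial argument. The paper does not supply its own proof of this proposition---it is imported directly from \cite[Proposition~4.8]{RicottaRoyer2018}, with the sole remark that the proof relies on Weil's Riemann Hypothesis for curves---so your argument is exactly in line with what is indicated.
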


\begin{proof}[Proof of Proposition~\ref{fin-dis-large-p-prop}]
Similarly as in the proof of Theorem~\ref{main}, we may proceed by the method of moments. For this we find it convenient to use the truncated random variables  $\Kl^{\bullet}_{H}(\cdot;p)$ and $\Kl_H$ defined for $H>0$ analogously to \eqref{KlpH-truncated} as
\begin{equation}
\label{truncated-bullet}
\Kl^{\bullet}_{H}\Big|_{\Omega_{a_1}}(t;p)=\!\!\!\!\!\sum_{\substack{|h|\leqslant H\\(h-a_1)b_0\in(\mathbb{Z}/p\mathbb{Z})^{\times 2}}}\!\!\!\!\!\beta(h;t)U_{h,a_1}^{\sharp},\quad
\Kl_H(t)=\sum_{|h|\leqslant H}\beta(h;t)U_h,\quad (t\in [0,1]),
\end{equation}
where $\beta(h;t)$ is as in \eqref{completion-eq2}, $U_{h,a_1}^{\sharp}$ and $U_h$ are independent random variables of probability law $\mu$ in \eqref{def-mu} and $\mu_U$ in \eqref{KKl}, respectively, and keeping in mind the notation $\Omega=\bigsqcup_{a_1\in(\mathbb{Z}/p\mathbb{Z})^{\times}}\Omega_{a_1}$ for the underlying probability space of $\Kl^{\bullet}(\cdot;p)$ from Remark~\ref{all-a-remark}.

Fix $H>0$, $k\geqslant 1$, a $k$-tuple $\bm{t}=(t_1,\dots,t_k)\in[0,1]^k$, $\bm{m}=(m_1,\dots,m_k)$, and $\bm{n}=(n_1,\dots,n_k)\in\mathbb{Z}_{\geqslant 0}^k$, denote $\ell(\bm{m}+\bm{n})=\sum_{i=1}^k(m_i+n_i)$, and consider the corresponding complex moment of $\Kl^{\bullet}(\cdot;p)$:
\begin{align*}
\mathcal{M}^{\bullet}_{H}(\bm{t};p;\bm{m};\bm{n};b_0)
&=\frac1{p-1}\sum_{a_1\in(\mathbb{Z}/p\mathbb{Z})^{\times}}\mathbb{E}\Big(\prod_{i=1}^k\overline{\Kl^{\bullet}_{H}}(t_i;p)\Big|_{\Omega_{a_1}}^{m_i}\Kl^{\bullet}_{H}(t_i;p)\Big|_{\Omega_{a_1}}^{n_i}\Big)\\
&=\frac1{p-1}\sum_{a_1\in(\mathbb{Z}/p\mathbb{Z})^{\times}}\!\sum_{\bm{h}\in [-H,H]^{\ell(\bm{m}+\bm{n})}}
\!\!\!\!\!\bm{\delta}(\bm{h};(a_1,b_0))\bm{\beta}(\bm{h};\bm{t})\mathbb{E}\Big(\prod_{i=1}^k\prod_{j=1}^{m_i+n_i}U^{\sharp}_{h_{i,j},a_1}\Big)\\
&=\sum_{\bm{h}\in [-H,H]^{\ell(\bm{m}+\bm{n})}}\frac{\big|(\mathbb{Z}/p\mathbb{Z})^{[T_{p,\bm{h}}]}\big|}{p-1}\bm{\beta}(\bm{h};\bm{t})\mathbb{E}\Big(\prod_{i=1}^k\prod_{j=1}^{m_i+n_i}U^{\sharp}_{h_{i,j}}\Big),
\end{align*}
where $\bm{h}=(\bm{h}_1,\dots,\bm{h}_k)\in [-H,H]^{\ell(\bm{h}+\bm{n})}$, $\bm{h}_i=(h_{i,1},\dots,h_{i,m_i},h_{i,m_i+1},\dots,h_{i,m_i+n_i})\in [-H,H]^{m_i+n_i}$,
\[ \bm{\delta}(\bm{h};(a_1,b_0))=\prod_{i=1}^k\prod_{j=1}^{m_i+n_i}\delta_{(a_1-h_{i,j})b_0\in(\mathbb{Z}/p\mathbb{Z})^{\times 2}},\,\, \bm{\beta}(\bm{h};\bm{t})=\prod_{i=1}^k\prod_{j=1}^{m_i}\overline{\beta(h_{i,j};t_i)}\prod_{j=m_i+1}^{m_i+n_i}\beta(h_{i,j};t_i), \]
$(U_h^{\sharp})$ is another sequence of independent random variables of probability law $\mu$, and
\[ T_{p,\bm{h}}=(T_{\bm{h}}+p\mathbb{Z})/p\mathbb{Z},\quad T_{\bm{h}}=\big\{h_{i,j}:1\leqslant i\leqslant k,\,1\leqslant j\leqslant m_i+n_i\big\}. \]
Note that $|T_{\bm{h},p}|=|T_{\bm{h}}|$ for sufficiently large $p$ (namely for $p>2H$).
Applying Proposition~\ref{RR-Weil}, we have that for $p>2H$
\begin{equation}
\label{MpH-MH}
\begin{aligned}
&\mathcal{M}^{\bullet}_{H}(\bm{t};p;\bm{m};\bm{n};b_0)\\
&\qquad=\sum_{\bm{h}\in [-H,H]^{\ell(\bm{m}+\bm{n})}}
\bm{\beta}(\bm{h};\bm{t})
\frac1{2^{|T_{\bm{h}}|}}\mathbb{E}\Big(\prod_{i=1}^k\prod_{j=1}^{m_i+n_i}U^{\sharp}_{h_{i,j}}\Big)\Big(1+\mathrm{O}\Big(\frac{|T_{\bm{h}}|2^{|T_{\bm{h}}|}}{p^{1/2}}\Big)\Big)\\
&\qquad=\sum_{\bm{h}\in [-H,H]^{\ell(\bm{m}+\bm{n})}}
\bm{\beta}(\bm{h};\bm{t})\mathbb{E}\Big(\prod_{i=1}^k\prod_{j=1}^{m_i+n_i}U_{h_{i,j}}\Big)+\mathrm{O}_{\ell(\bm{m}+\bm{n})}\Big(\frac{\log^{\ell(\bm{m}+\bm{n})}H}{p^{1/2}}\Big)\\
&\qquad=\mathcal{M}_H(\bm{t};\bm{m};\bm{n})+\mathrm{O}_{\ell(\bm{m}+\bm{n})}\Big(\frac{\log^{\ell(\bm{m}+\bm{n})}H}{p^{1/2}}\Big),
\end{aligned}
\end{equation}
where $\mathcal{M}_H(\bm{t};\bm{m};\bm{n})$ is the corresponding complex moment of $\Kl_H$. Taking $p\to\infty$, we conclude that, for every fixed $H>0$, the $C^0([0,1],\mathbb{C})$-valued random variable $\Kl^{\bullet}_{H}(\cdot;p)$ converges, in the sense of finite distributions, to the random variable $\Kl_H$.

On the other hand, the analogue of Proposition~\ref{random-series-properties} holds for the random variables $\Kl^{\bullet}(\cdot;p)$ and $\Kl$, simply by taking a convex linear combination of $\Kl(t;p;(a_1,b_0))$ in the former case and by \cite[Proposition 3.1]{RicottaRoyer2018} in the latter. As in \eqref{main-term-eval}, this implies that
\begin{equation}
\label{MpH-Mp}
\begin{aligned}
&\mathcal{M}^{\bullet}_{H}(\bm{t};p;\bm{m};\bm{n};b_0)-\mathcal{M}^{\bullet}(\bm{t};p;\bm{m};\bm{n};b_0)\\
&\qquad=\mathbb{E}\Big(\prod_{i=1}^k\overline{\Kl^{\bullet}_{H}}(t_i;p)^{m_i}\Kl^{\bullet}_{H}(t_i;p)^{n_i}-\prod_{i=1}^k\overline{\Kl^{\bullet}}(t_i;p)^{m_i}\Kl^{\bullet}(t_i;p)^{n_i}\Big)\\
&\qquad\ll_{\ell(\bm{m}+\bm{n})}\frac{(\log H)^{\ell(\bm{m}+\bm{n})}}{H^{1/2}},
\end{aligned}
\end{equation}
and similarly $\mathcal{M}_H(t;\bm{m};\bm{n})-\mathcal{M}(t;\bm{m};\bm{n})\ll_{\ell(\bm{m}+\bm{n})}(\log H)^{\ell(\bm{m}+\bm{n})}/H^{1/2}$, both uniformly in $p$. From \eqref{MpH-MH} and \eqref{MpH-Mp}, choosing for example $H=\frac12(p-1)$ it follows that
\[ \mathcal{M}^{\bullet}(\bm{t};p;\bm{m};\bm{n};b_0)=\mathcal{M}(\bm{t};\bm{m};\bm{n})+\mathrm{O}\Big(\frac{\log^{\ell(\bm{m}+\bm{n})}p}{p^{1/2}}\Big). \]
From this it follows that the sequence of $C^0([0,1],\mathbb{C})$-valued random variables $\Kl^{\bullet}(\cdot;p)$ converges in the sense of finite distributions to the random variable $\Kl$ as $p\to\infty$, as announced.
\end{proof}

\begin{proof}[Proof of Proposition~\ref{tight-large-p-prop}]
We adapt to the present situation the first part of the proof of \cite[Lemma 4.6]{RicottaRoyerShparlinski2020}, starting from the truncated random variable $\Kl^{\bullet}_{H}(\cdot;p)$ defined for every $H>0$ in \eqref{truncated-bullet}. Since the random variables $U^{\sharp}_h$ are independent, and each of them is 4-subgaussian~\cite[Proposition B.8.2]{KowalskiBook}, we have for every $u\in\mathbb{R}$
\begin{equation}
\label{bullet-subgaussian}
\begin{aligned}
\mathbb{E}\Big(e^{u(\Kl^{\bullet}_{H}(t;p)-\Kl^{\bullet}_{H}(s;p))}\Big)
&=\frac1{p-1}\sum_{a_1\in(\mathbb{Z}/p\mathbb{Z})^{\times}}\prod_{\substack{|h|\leqslant H\\(a_1-h)b_0\in(\mathbb{Z}/p\mathbb{Z})^{\times 2}}}\mathbb{E}\Big(e^{u(\beta(h;t)-\beta(h;s))U^{\sharp}_h}\Big)\\
&\leqslant\prod_{|h|\leqslant H}e^{4|\beta(h;t)-\beta(h;s)|^2u^2/2}=e^{(4\sigma_H^2)u^2/2},
\end{aligned}
\end{equation}
where, by the Plancherel formula,
\[ \sigma_H^2=4\sum_{|h|\leqslant H}|\beta(h;t)-\beta(h;s)|^2\leqslant 4\sum_{h\in\mathbb{Z}}\big|\widehat{\mathbf{1}_{[t,s]}}(h)\big|^2\leqslant 4\|\mathbf{1}_{[t,s]}\big\|_2^2=4|t-s|. \]
Since by definition \eqref{bullet-subgaussian} shows that the random variable $\Kl^{\bullet}_{H}(t;p)-\Kl^{\bullet}_{H}(s;p))$ is $\sigma_H$-subgaussian, we have by \cite[Proposition B.8.3]{KowalskiBook} for every $\alpha\in\mathbb{Z}_{\geqslant 0}$ 
\begin{equation}
\label{tight-truncated}
\mathbb{E}\big(\big|\Kl^{\bullet}_{H}(t;p)-\Kl^{\bullet}_{H}(s;p))\big|^{\alpha}\big)\leqslant c_{\alpha}\sigma_H^{\alpha}\leqslant 2^{\alpha}c_{\alpha}|t-s|^{\alpha/2}.
\end{equation}

On the other hand, for an even integer $\alpha$, it follows from Proposition~\ref{random-series-properties} as in \eqref{main-term-eval} that
\[ \mathbb{E}\big(\big|\Kl^{\bullet}_{H}(t;p)-\Kl^{\bullet}_{H}(s;p))\big|^{\alpha}\big)=\mathbb{E}\big(\big|\Kl^{\bullet}(t;p)-\Kl^{\bullet}(s;p))\big|^{\alpha}\big)+\mathrm{O}_{\alpha}\Big(\frac{\log^{\alpha}H}{H^{1/2}}\Big), \]
uniformly in $p$. We may thus take limits as $H\to\infty$ in \eqref{tight-truncated}, yielding
\[ \mathbb{E}\big(\big|\Kl^{\bullet}(t;p)-\Kl^{\bullet}(s;p))\big|^{\alpha}\big)\leqslant 2^{\alpha}c_{\alpha}|t-s|^{\alpha/2}. \]
Taking any even integer $\alpha\geqslant 4$, we see that the sequence $\Kl^{\bullet}(\cdot;p)$ satisfies Kolmogorov's criterion for tightness and is therefore tight by Proposition~\ref{kolmogorov}.
\end{proof}

\bibliographystyle{amsalpha}
\bibliography{KloostPaths} 

\end{document}